\numberwithin{equation}{section}
\setlist[enumerate,1]{label={\rm(\alph*)}, ref={\rm\alph*}} 
\newtheorem{lem}{Lemma}[section]
\newtheorem{prop}[lem]{Proposition}
\newtheorem{thm}[lem]{Theorem}
\theoremstyle{definition}
\newtheorem{defn}[lem]{Definition}
\theoremstyle{remark}
\newtheorem*{remark}{Remark}
\newcommand{\VR}{\mathcal{O}}
\newcommand{\invo}{\overline{\rule{2.5mm}{0mm}\rule{0mm}{4pt}}}
\renewcommand{\D}{\mathcal{D}}
\newcommand{\V}{\mathcal{V}}
\newcommand{\W}{\mathcal{W}}
\newcommand{\II}{\mathcal{I}}
\newcommand{\LL}{\mathcal{L}}
\newcommand{\M}{\mathcal{M}}
\newcommand{\N}{\mathcal{N}}
\newcommand{\T}{\mathcal{T}}
\newcommand{\Id}{\operatorname{Id}}
\newcommand{\af}{\mathrm{af}}
\newcommand{\pspot}{{\mathfrak{p}}}
\newcommand{\qspot}{{\mathfrak{q}}}
\newcommand{\Mor}{\mathsf{Mor}}
\newcommand{\mult}{\mathsf{mult}}
\newcommand{\can}{\mathsf{can}}
\newcommand{\Mod}{\mathsf{Mod}}
\newcommand{\VB}{\mathsf{Mod}}
\newcommand{\qf}[1]{{\langle{#1}\rangle}} % formes quadratiques
\newcommand{\genxi}{\chi}
\newcommand{\geneta}{\theta}
\newcommand{\resatp}{k(\pspot)}
\newcommand{\resatq}{k(\qspot)}
\newcommand{\resatinf}{k(\infty)}
\newcommand{\Datp}{D(\pspot)}
\newcommand{\Tatp}{T(\pspot)}
\newcommand{\Tatinf}{T(\infty)}
\newcommand{\Lsigatp}{L_\sigma(\pspot)}
\newcommand{\Latp}{L(\pspot)}
\newcommand{\Latinf}{L(\infty)}
\DeclareMathOperator{\Trd}{Trd}
\DeclareMathOperator{\Hom}{Hom}
\DeclareMathOperator{\End}{End}
\DeclareMathOperator{\calHom}{\mathcal{H\!o\!m}}
\DeclareMathOperator{\calEnd}{\mathcal{E\!n\!d}}
\DeclareMathOperator{\ext}{\mathsf{ext}}
\DeclareMathOperator{\res}{\mathsf{res}}
\DeclareMathOperator{\image}{image}
\DeclareMathOperator{\coker}{coker}
\DeclareMathOperator{\Pic}{Pic}
\DeclareMathOperator{\tr}{tr}
\DeclareMathOperator{\Spec}{Spec}
\DeclareMathOperator{\Proj}{Proj}
\DeclareMathOperator{\Skew}{Skew}
\DeclareMathOperator{\PGL}{PGL}
\def\lowsim{\vbox to 0pt{\vss\hbox{$\scriptstyle\sim$}\vskip-2pt}}
\newcommand{\lra}{\longrightarrow}
\title{Witt groups of Severi--Brauer varieties\\ and of function fields of conics}
\author{Anne Qu\'eguiner-Mathieu}
\address{Universit\'e Paris 13,  
Sorbonne Paris Nord, 
LAGA - CNRS (UMR 7539),
F-93430 Villetaneuse, France}
\email{queguin@math.univ-paris13.fr}
\author{Jean-Pierre Tignol}
\address{ICTEAM Institute, Box L4.05.01. 
UCLouvain, 
B-1348 Louvain-la-Neuve, Belgium}
\email{jean-pierre.tignol@uclouvain.be}
\begin{document}

%%%%%%%%%%%%%%%%%%%%%%%%%%%%%%%
% Title page
%%%%%%%%%%%%%%%%%%%%%%%%%%%%%%%

\maketitle

\begin{prelims}

\DisplayAbstractInEnglish

\bigskip

\DisplayKeyWords

\medskip

\DisplayMSCclass

\end{prelims}

%%%%%%%%%%%%%%%%%%%%%
% Table of Contents
%%%%%%%%%%%%%%%%%%%%%

\newpage

\setcounter{tocdepth}{1}

\tableofcontents

%%%%%%%%%%%%%%%%%%%%%
% Content begins here
%%%%%%%%%%%%%%%%%%%%%

\section{Introduction}

This paper consists of two parts. In the first part, comprising
Sections~\ref{sec:tauto} and~\ref{sec:SBvar}, we consider a central
division algebra $D$ 
with a symplectic involution $\sigma$ over an arbitrary field $k$ of
characteristic different from~$2$. We make no restriction on the
degree of $D$, which may be an arbitrary even power of~$2$. To the
involution~$\sigma$, we associate an invertible sheaf
  $\LL_\sigma$
on the Severi--Brauer variety $X$ of $D$, 
whose class generates $\Pic(X)$. We relate
skew-hermitian spaces over $(D,\sigma)$ and symmetric
bilinear spaces over $X$ with values in $\LL_\sigma$ by a canonical
isomorphism of Witt groups
\[
  M\colon W^-(D,\sigma)\overset{\lowsim}\lra W(X,\LL_\sigma);
\]
see Theorem~\ref{thm:M}.
The map $M$ is defined
as the composition of the scalar extension map
$\ext_X\colon W^-(D,\sigma)\to W^-(\D,\sigma)$ from $D$ to the Azumaya
algebra $\D=D\otimes_k\VR_X$ over $X$, and a Morita isomorphism
$\Mor\colon W^-(\D,\sigma)\to W(X,\LL_\sigma)$. The injectivity of $M$ is
obtained as a consequence of a theorem of Karpenko \cite{Kar}, and
the surjectivity is derived from Pumpl\"un's description of
$W(X,\LL_\sigma)$ in \cite{Pump2}.
\medbreak

In the second part of the paper, we specialize our discussion to the
case where $D$ is a quaternion algebra. The involution $\sigma$ is
then the canonical involution, and $X$ is a smooth projective conic
without rational points.
The Witt groups $W(X)$ and $W(X,\LL_\sigma)$ satisfy the purity
property (see~\cite[Definition~8.2 and Corollary~10.3]{BW}): 
They embed in the Witt group
$W(F)$ of the function field of $X$, and their images are the kernels
of suitable residue maps. We thus have exact sequences involving the
residue fields $\resatp$ at closed points $\pspot\in
X^{(1)}$:
\begin{equation}
  \label{eq:intro1}
  0\lra W(X)\lra W(F) \overset{\delta}{\lra} \bigoplus_\pspot W(\resatp)
  \quad\text{and}\quad
  0\lra W(X,\LL_\sigma) \lra W(F) \overset{\delta'}\lra \bigoplus_\pspot
  W(\resatp). 
\end{equation}
We compute the cokernels of $\delta$ and $\delta'$ in terms of the
Witt groups $W^+(D,\sigma)$ and $W^-(D,\sigma)$ of hermitian and
skew-hermitian forms over $(D,\sigma)$:
\[
  \coker\delta \simeq W^-(D,\sigma) \quad\text{and}\quad
  \coker\delta'\simeq \ker\left(W(k)\lra W^+(D,\sigma)\right).
\]
These isomorphisms can be interpreted 
in terms of Witt groups of triangulated categories. Indeed, by~\cite[Corollary~92]{Bal} (see also~\cite[Section~8]{BW}), we have 
\[
  \coker\delta \simeq W^1(X) \quad\text{and}\quad
  \coker\delta'\simeq W^1(X,\LL_\sigma).
\]
Hence, we get isomorphisms
\[
 W^1(X)\simeq W^-(D,\sigma) \quad\text{and}\quad
 W^1(X,\LL_\sigma) \simeq \ker\left(W(k)\lra W^+(D,\sigma)\right).
\]
Since $W^1(k)=W^2(k)=0$ and $W^-(D,\sigma)=W^2(D,\sigma)$, the first
isomorphism also follows from Xie's exact sequence, see \cite[Theorem~1.2]{Xie}, 
\[
  \cdots\lra W^1(k) \lra W^1(X) \lra W^2(D,\sigma)\lra W^2(k) \lra\cdots.
\]

As $W(X)$ can be described as the cokernel of an
injective 
transfer map $W^+(D,\sigma)\to W(k)$ (see Proposition~\ref{prop:WC}), 
the description of $\coker\delta$ and $\coker\delta'$, 
together with the isomorphism $M\colon W^-(D,\sigma)\simeq
W(X,\LL_\sigma)$, leads to two strikingly similar exact sequences
\begin{equation}
  \label{eq:intro2}
  0\lra W^+(D,\sigma)\lra W(k)\lra W(F) \overset{\delta}\lra
  \bigoplus_\pspot 
  W(\resatp) \lra W^-(D,\sigma)\lra 0
\end{equation}
and
\begin{equation}
  \label{eq:intro3}
  0\lra W^-(D,\sigma)\lra W(F) \overset{\delta'}\lra \bigoplus_\pspot
  W(\resatp) \lra W(k)\lra W^+(D,\sigma)\lra 0.
\end{equation}
 
In substance, sequences~\eqref{eq:intro2} and \eqref{eq:intro3} are
due to Pfister \cite[Section~7]{Pf}, 
although Pfister does not consider forms over $D$: He
substitutes for $W^+(D,\sigma)$ and $W^-(D,\sigma)$
in~\eqref{eq:intro2} and \eqref{eq:intro3} groups that he defines
specifically for this purpose.

The exactness of \eqref{eq:intro2} and \eqref{eq:intro3} is proved in
Section~\ref{sec:exact}. 
(The exactness of \eqref{eq:intro3} at the middle term has been
established by Parimala \cite[Theorem~5.1]{Pari}, who also has an \emph{ad
  hoc} description of the kernel of $\delta'$ in
\cite[Theorem~5.3]{Pari}.) A delicate part of the argument is to
coherently choose uniformizers and transfer maps $\resatp\to k$ at
each closed point $\pspot$. This issue is addressed in
Section~\ref{sec:restrans}. 
In Section~\ref{sec:octa}, we
set up an exact octagon relating the Witt groups $W^+(D,\sigma)$ and
$W^-(D,\sigma)$ 
to the Witt groups of quadratic or hermitian forms over a maximal
subfield of $D$. This exact octagon, due to Lewis \cite{Lewis}, is a
key technical tool to show that \eqref{eq:intro2} is exact at the
next-to-last term.

For a suitable identification of
$\LL_\sigma$ with an ideal sheaf, it turns
out that the residue maps $\delta$ and $\delta'$ only differ in one
point of degree~$2$, which we designate by $\infty$. As a result,
quadratic forms over $k$ that are split by $\resatinf$ map in $W(F)$ to
forms that lie in the kernel of $\delta'$; hence these forms can be
used to describe skew-hermitian forms over $D$. This idea is a key
ingredient in Becher's proof of the Pfister factor conjecture;  
see \cite{Bec}. It was also used in \cite[Proposition~3.4]{QTPfister} to give
examples of non-similar skew-hermitian forms over a quaternion algebra
that become similar over the function field of its Severi--Brauer
variety. Berhuy uses it in \cite{Ber} to define higher cohomological
invariants of quaternionic skew-hermitian forms. Note that Berhuy's
discussion at the top of p.~442 is flawed: The correspondence between
skew-hermitian forms after scalar extension to the function field and
quadratic forms \emph{does} depend on the choice of
splitting. However, Garrel~\cite[Section~3.1.3]{Garrel:these} has shown how the exact
sequences~\eqref{eq:intro2} and \eqref{eq:intro3} can be used to amend
Berhuy's arguments and expand his result, providing a general method
that produces cohomological invariants of skew-hermitian
forms that depend only on their similarity class.

\subsection*{Notation}
Throughout the paper, we let $D$ denote a central division algebra of
$2$-power degree~$n=2^d\geq2$ over an arbitrary field $k$ of
characteristic 
different from~$2$. We assume $D$ has exponent~$2$ and fix some
symplectic involution $\sigma$ of $D$. Let $X$ be the Severi--Brauer
variety of $n$-dimensional left ideals in $D$. Write $F=k(X)$
for its 
function field and $\VR_X$ for its structure sheaf. For each point
$\pspot$ on $X$, we write $\VR_\pspot$ for the local ring at $\pspot$
and $\resatp$ for its residue field. We let $\D=
D\otimes_k\VR_X$ denote the Azumaya algebra over $X$ obtained by
scalar extension to $\VR_X$. Its stalk and fiber at a point $\pspot$
are
\[
\D_\pspot = D\otimes_k\VR_\pspot \quad\text{and}\quad
\Datp= D\otimes_k\resatp.
\]
From Section~\ref{sec:octa} onward, $D$ is assumed to be a quaternion
algebra. The involution $\sigma$ is therefore the canonical
conjugation involution $\invo$; we often omit it from the notation and
write simply $W(D)$, $W^-(D)$ for $W^+(D,\sigma)$, $W^-(D,\sigma)$.

\subsection*{Acknowledgments}
The authors are grateful to A.~Merkurjev for the proof of
Proposition~\ref{prop:newT} and to the referee for suggestions that
allowed them to streamline the arguments in Section~\ref{sec:SBvar}.

\section{Locally free sheaves on Severi--Brauer varieties}
\label{sec:tauto}

In this section, we define on the Severi--Brauer variety $X$ a locally
free sheaf $\T$ of rank~$n$, which is the main tool for the Morita
equivalence developed in the next section. We use it to associate to
the symplectic involution $\sigma$ the generator $\LL_\sigma$ of
$\Pic(X)$ in which the symmetric bilinear spaces over $X$ we consider
take their values.

Let $T$ be the generic point of $X$, which is an $n$-dimensional left
ideal in the split algebra $D_F$ obtained from~$D$ by scalar extension
to the function field $F$ of $X$. The sheaf of $\VR_X$-modules $\T$ is
defined as the intersection of $\D$ with $T$ in $D_F$ (viewing $T$ and
$D_F$ as constant sheaves):
\begin{equation}
  \label{eq:defT}
  \T=T\cap\D\subset D_F.
\end{equation}
Since $T$ is a left ideal in $D_F$, it is clear that $\T$ is a sheaf
of left $\D$-modules.
The main properties of the sheaf~$\T$ are given in the
next proposition, using the following notation: For $\ell$ an
arbitrary field extension of $k$, let 
$X_\ell=X\times\Spec(\ell)$ be the $\ell$-variety obtained from $X$ by
base change, and let $p\colon X_\ell\to X$ be the projection map. For
any $\VR_X$-module $\M$, we let
$\M_\ell=p^*(\M)$ be the inverse image of $\M$; if $\ell$ is a
finite extension of $k$ and $\N$ is an $\VR_{X_\ell}$-module, we let
$\tr_{\ell/k}(\N)=p_*(\N)$ be the direct 
image of $\N$.

\begin{prop}
  \label{prop:newT}\leavevmode
  \begin{enumerate}
  \item\label{p:nT-a}
    The sheaf $\T$ is a locally free $\VR_X$-module of rank~$n$.
  \item\label{p:nT-b}
    If\, $\ell$ is a splitting field of\, $D$, every $\ell$-algebra
    isomorphism $D_\ell\simeq\End_\ell V$ with $V$ an $n$-dimensional
    $\ell$-vector space induces an isomorphism of sheaves
    \begin{equation}
      \label{eq:Ts}
      \T_\ell \,\simeq  V\otimes_\ell \VR_{X_\ell}(-1)\simeq
      \VR_{X_\ell}(-1)^{\oplus n}.
    \end{equation}
  \item\label{p:nT-c}
    The canonical homomorphism $D\to\End\T$ arising from the left
    $\D$-module structure on $\T$ yields an identification $D=\End\T$;
    hence $\T$ is an indecomposable locally free sheaf.
  \item\label{p:nT-d}
    For every maximal subfield $\ell$ of $D$, there is an isomorphism
    of sheaves \[\T\simeq\tr_{\ell/k}\left(\VR_{X_\ell}(-1)\right).\]
  \end{enumerate}
\end{prop}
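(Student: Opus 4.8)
The plan is to establish the four statements more or less in the order given, bootstrapping from the local freeness in~\ref{p:nT-a} to the split model in~\ref{p:nT-b}, and then reading off~\ref{p:nT-c} and~\ref{p:nT-d} from that model together with descent.

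First I would prove~\ref{p:nT-a} by working locally on $X$. Since $\T=T\cap\D$ is a subsheaf of the torsion-free (indeed locally free) sheaf $\D=D\otimes_k\VR_X$, it is torsion-free, hence it suffices to check that each stalk $\T_\pspot=T\cap\D_\pspot$ is free of rank~$n$ over the regular local ring $\VR_\pspot$. The key point is that $\D_\pspot=D\otimes_k\VR_\pspot$ is an Azumaya algebra over $\VR_\pspot$, and over a local ring an Azumaya algebra is a matrix algebra after a finite étale (in fact, here, after passing to a splitting field and using that $X$ is generically split) cover; more directly, since the generic fiber $D_F$ is split, $\T_\pspot$ is a finitely generated $\VR_\pspot$-submodule of the $n^2$-dimensional $F$-space $T$ containing $T\cap\D$, and one checks it is reflexive, hence free over the regular local ring, with rank computed at the generic point. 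At the generic point $T$ itself is an $n$-dimensional left ideal in $M_n(F)$, so $\T_F=T$ has $F$-dimension~$n$; thus $\T$ is locally free of rank~$n$.

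For~\ref{p:nT-b}, fix a splitting field $\ell$ and an isomorphism $D_\ell\simeq\End_\ell V$. Over $X_\ell$ the Azumaya algebra $\D_\ell$ becomes $\calEnd(\V)$ for a canonical locally free sheaf $\V$ of rank~$n$ (the pullback of the tautological picture on the Severi--Brauer variety, which after splitting is projective space), and the tautological sub-line-bundle identifies $\V$ with $V\otimes_\ell\VR_{X_\ell}(-1)$; the generic ideal $T_\ell\subset D_\ell\otimes F$ is exactly $\Hom$ into a fixed line, so $\T_\ell=T_\ell\cap\D_\ell$ is the sheaf of endomorphisms factoring through that line, which is $V\otimes_\ell\VR_{X_\ell}(-1)$. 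Concretely one can argue that $T\cap\D$ commutes with base change to the flat morphism $p\colon X_\ell\to X$, i.e. $\T_\ell=(T\cap\D)_\ell=T_\ell\cap\D_\ell$, and then make the identification on $\mathbb{P}(V)$ by hand. This gives the isomorphism~\eqref{eq:Ts}. The main obstacle here—and the one place I would be most careful—is checking that the formation of the intersection $T\cap\D$ genuinely commutes with the (flat, but infinite if $\ell/k$ is infinite) base change $p^*$, so that the split computation really computes $\T_\ell$; one handles this by noetherian/flat-base-change arguments for the left-exact functor $\M\mapsto T\cap\M$ on coherent sheaves, reducing to a finitely generated subextension of $\ell/k$ if necessary.

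Part~\ref{p:nT-c} then follows by faithfully flat descent: the canonical map $D\to\End_{\VR_X}\T$ is a $k$-algebra homomorphism, and after base change to a splitting field $\ell$ it becomes, by~\eqref{eq:Ts} and $\End(\VR_{X_\ell}(-1)^{\oplus n})=M_n(\Gamma(X_\ell,\VR_{X_\ell}))=M_n(\ell)$, the tautological isomorphism $D_\ell\simeq\End_\ell V\simeq M_n(\ell)$; since source and target are finite-dimensional $k$-vector spaces of the same dimension $n^2$ and the map is an isomorphism after the faithfully flat extension $k\to\ell$, it is an isomorphism, whence $D=\End\T$ and $\T$ is indecomposable because $D$ is a division algebra (it has no nontrivial idempotents). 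Finally, for~\ref{p:nT-d}, let $\ell\subset D$ be a maximal subfield, so $[\ell:k]=n$ and $\ell$ splits $D$; consider the fibre product $X_\ell\to X$ and apply part~\ref{p:nT-b} to get $\T_\ell\simeq\VR_{X_\ell}(-1)^{\oplus n}$, but more precisely I would note that $\ell\subset D=\End\T$ acts on $\T$, making $\T$ a sheaf of $\ell\otimes_k\VR_X$-modules, i.e. a sheaf on $X_\ell$ whose pushforward along the finite flat map $X_\ell\to X$ recovers $\T$; identifying that sheaf on $X_\ell$ via the splitting shows it is $\VR_{X_\ell}(-1)$ (it has rank one over $\VR_{X_\ell}$ since $\T$ has rank $n=[\ell:k]$ over $\VR_X$, and its determinant is $\VR_{X_\ell}(-1)$ by~\eqref{eq:Ts} applied over a further splitting field). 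Hence $\T\simeq\tr_{\ell/k}(\VR_{X_\ell}(-1))=p_*(\VR_{X_\ell}(-1))$, as claimed.
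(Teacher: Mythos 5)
Your overall architecture (explicit computation over a splitting field, dimension count for $\End\T$, $\ell$-structure for the trace description) matches the paper's, but your proof of part~(\ref{p:nT-a}) has a genuine gap. You argue that $\T_\pspot$ is a reflexive module over the regular local ring $\VR_\pspot$ and conclude that it is free. Reflexivity does hold (the quotient $\D_\pspot/\T_\pspot$ embeds in $D_F/T$, hence is torsion-free), but ``reflexive implies free'' is only valid over regular local rings of dimension at most~$2$: for $n=2^d\geq 4$ the variety $X$ has dimension $n-1\geq 3$, and reflexive modules over its local rings at closed points need not be free (second syzygies of the residue field are standard counterexamples). Since no restriction is placed on $\deg D$ until Section~4, this step fails in exactly the generality claimed. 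The correct route --- and the one the paper takes --- is to prove~(\ref{p:nT-b}) first, by the explicit computation of $T_\ell\cap\D_\ell$ on the standard affine cover of $\mathbb{P}(V^*)$, and then deduce~(\ref{p:nT-a}) by faithfully flat descent of local freeness along $X_\ell\to X$. Your sketch of~(\ref{p:nT-b}) does not in fact use~(\ref{p:nT-a}), so the argument can be reordered, but as written~(\ref{p:nT-a}) is unproved. (Also, $T$ has $F$-dimension $n$, not $n^2$.)

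In part~(\ref{p:nT-d}) there is a second, smaller gap: knowing $\T_\ell\simeq\VR_{X_\ell}(-1)^{\oplus n}$ does not determine the rank-one sheaf $\N$ on $X_\ell$ with $p_*\N\simeq\T$ ``via its determinant,'' because $p^*p_*\N$ is not $\N^{\oplus n}$ in general. One needs an extra input relating $\N$ to $\T_\ell$: the paper invokes Arason--Elman--Jacob (their Theorem~1.8 for the existence and local freeness of $\N$, their Lemma~1.4 for the fact that $\N$ is a direct summand of $\T_\ell$), after which $\N\simeq\VR_{X_\ell}(-1)$ is immediate. Alternatively, the counit $p^*p_*\N\to\N$ is surjective for the finite morphism $p$, giving a surjection $\VR_{X_\ell}(-1)^{\oplus n}\to\N$ and hence $\N\simeq\VR_{X_\ell}(d)$ with $d\geq -1$, while $H^0(X_\ell,\N)=H^0(X,\T)=T\cap D=0$ forces $d\leq-1$; either way the step requires an argument you have not supplied. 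Parts~(\ref{p:nT-b}) and~(\ref{p:nT-c}) are essentially the paper's proofs in outline, and your flagging of the flat-base-change issue for the intersection $T\cap\D$ is appropriate, though the actual identification of $\T_\ell(U)$ still has to be carried out as in the paper.
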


\begin{proof}
  We first prove\footnote{We are indebted to A.~Merkurjev for
    suggesting this proof 
    to us.}
  \eqref{p:nT-b}, as \eqref{p:nT-a} follows by base change. Let $\ell$ be a
  splitting field of $D$, and fix an $\ell$-algebra isomorphism to
  identify $D_\ell=\End_\ell(V)=V\otimes_\ell V^*$ for some
  $n$-dimensional $\ell$-vector space $V$. Then $X_\ell$ is identified
  with the projective space $\mathbb{P}(V^*)$, viewing each line
  $d\subset V^*$ as the $n$-dimensional left ideal $V\otimes_\ell  d$.
    Pick an $\ell$-base $v_1$, \ldots, $v_n$ of $V$, so
    $X_\ell=\Proj(\ell[v_1,\ldots, v_n])$, and let $U\subset X_\ell$
    be the open subscheme defined by $v_n\neq0$, so
    $U=\Spec(\ell[v_1v_n^{-1},\ldots, v_{n-1}v_n^{-1}])$. The field
    $\ell(X_\ell)=\ell(U)$ is the rational function field
    $\ell(v_1v_n^{-1},\ldots,v_{n-1}v_n^{-1})$, and the module of
    sections of $\D_\ell$ over $U$ is
    \[
      \D_\ell(U)=V\otimes_\ell V^*\otimes_\ell\VR_{X_\ell}(U) =
      V\otimes_\ell V^*\otimes_\ell\ell[v_1v_n^{-1},\ldots,
      v_{n-1}v_n^{-1}] \subset 
      V\otimes_\ell V^*\otimes_\ell\ell(X_\ell)=D_{\ell(X_\ell)}.
    \]
    On the other hand, the $\ell(X_\ell)$-rational point
      induced by base change from the generic point of $X$ is the
    line 
    $S=\genxi\cdot \ell(X_\ell)\subset V^*\otimes_\ell\ell(X_\ell)$,
    where
    \[
     \genxi=\sum_{i=1}^{n} v_i^*\otimes v_iv_n^{-1}
     \in 
     V^*\otimes_\ell\ell(X_\ell).
   \]
    Viewed as a left ideal in
    $D_{\ell(X_\ell)}$, this point is $T_\ell=V\otimes_\ell S$. Since
    $S\cap(V^*\otimes_\ell\VR_{X_\ell}(U))$ is the
    $\VR_{X_\ell}(U)$-span of $\genxi$, it follows that
    \begin{equation}
      \label{eq:newT}
      \T_\ell(U)=T_\ell\cap \D_\ell(U) = V\otimes_\ell
      \genxi\cdot\VR_{X_\ell}(U).
    \end{equation}
    Now, there is a canonical embedding
    $\VR_{X_\ell}(-1)\to 
    V^*\otimes_\ell \VR_{X_\ell}$ which on $U$ maps $v_n^{-1}$ to
    $\genxi$. 
    Tensoring with $V$ yields an
    embedding $V\otimes_\ell\VR_{X_\ell}(-1)\to V\otimes_\ell
    V^*\otimes_\ell \VR_{X_\ell}=\D_\ell$. The module of sections over
    $U$ of the image of this embedding is exactly $\T_\ell(U)$. The
    same holds for every 
    open subscheme in the standard affine cover of~$X_\ell$; hence we
    may identify $\T_\ell=V\otimes_\ell\VR_{X_\ell}(-1)$,
    proving~\eqref{eq:Ts}.

    \eqref{p:nT-c}~ Continuing with the same notation, consider
    $\T_\ell\spcheck=\calHom(\T_\ell,\VR_{X_\ell})$, the dual sheaf 
    of $\T_\ell$. From~\eqref{eq:Ts} it follows
    that $\T_\ell\spcheck\simeq V^*\otimes_{\ell} \VR_{X_\ell}(1)$,
   hence 
 \[
  \calEnd\T_\ell = \T_\ell\otimes\T_\ell\spcheck \simeq  
\End_{\ell}(V) \otimes_{\ell}\VR_{X_\ell}(0).
  \]
  This shows that $\dim_{\ell}(\End\T_\ell)=n^2$, hence
  $\dim_k(\End\T)=n^2$. The canonical map $D\to\End\T$ is injective
  since $D$ is a division algebra; hence it is an isomorphism by 
  dimension count.

  \eqref{p:nT-d}~ Now, let $\ell$ be a maximal subfield of $D$. Since $D=\End\T$,
  the locally free $\VR_X$-module $\T$ has an $\ell$-structure, and
  $\T\simeq\tr_{\ell/k}(\N)$ for some irreducible locally free
  $\VR_{X_\ell}$-module $\N$ by \cite[Theorem~1.8]{AEJ}. Lemma~1.4 of
  \cite{AEJ} shows 
  that the $\VR_{X_\ell}$-module $\N$ is a direct summand of
  $\T_\ell$. By~\eqref{eq:Ts} it follows that
  $\N\simeq\VR_{X_\ell}(-1)$. 
\end{proof}

\begin{remark}
  For $\ell$ a Galois extension of $k$ that splits $D$, it follows
  from Proposition~\ref{prop:newT} that $\T$ can be obtained by Galois
  descent from $V\otimes_\ell\VR_{X_\ell}(-1)$ by using the 
  cocycle with values in $\PGL(V)$ that twists $\End_\ell(V)$ into
  $D$. Therefore, $\T$ can be identified with the sheaf $J$ defined
  by Quillen~\cite[Section~8.4]{Q} in his computation of the $K$-theory of
  Severi--Brauer varieties.
\end{remark}

In order to define the invertible $\VR_X$-module $\LL_\sigma$ attached
to the involution $\sigma$, we start with some observations on 
split central simple algebras, which will be applied to the scalar
extension of $D$ to the residue fields at points of $X$.

Let $A$ be a split central simple algebra of even degree $n=2m$ over
an arbitrary field $E$, and let $\sigma_A$ be a symplectic involution
on $A$. We let
\[
  \Skew(\sigma_A)=\{x\in A\mid \sigma_A(x)=-x\}
\]
and write $\Trd\colon A\to E$ for the reduced trace map. Recall that
the bilinear form $\Trd(xy)$ is nonsingular; hence for every nonzero
$x\in A$, there exists a $y\in A$ such that $\Trd(xy)=1$.

\begin{lem}
  \label{lem:tauto}
  Let $I\subset A$ be an $n$-dimensional left
  ideal, let $\sigma_A(I)=\{\sigma_A(\xi)\mid\xi\in I\}$ be its
  conjugate $n$-dimensional right ideal, and denote by $J$ the intersection $J=I\cap
  \sigma_A(I)$. Then 
  $\dim_EJ=1$ and $J=I\cap\Skew(\sigma_A)$; moreover, for $\lambda\in
  J$ and $\mu\in A$, we have 
  \begin{equation}
    \label{eq:tauto1}
    \sigma_A(\lambda)=-\lambda,\quad \lambda^2=0,\quad\text{and}\quad
    \lambda\mu\lambda=\Trd(\lambda\mu)\lambda.
  \end{equation}
  Multiplication in $A$ defines an isomorphism of $E$-vector
  spaces
  \[
  \mult\colon \sigma_A(I)\otimes_{A} I\lra J, \quad
  \sigma_A(\xi)\otimes\eta \longmapsto \sigma_A(\xi)\eta.
  \]
  Moreover, there is a canonical isomorphism of $A$-bimodules
  \[
  \can\colon I\otimes_E\sigma_A(I) \overset{\lowsim}{\lra} J\otimes_EA
  \]
  defined as follows: Pick $\lambda\in J$ and $\mu\in A$ such that
  $\Trd(\lambda\mu)=1$, and let
  \[
  \can(\xi\otimes\sigma_A(\eta))=\lambda\otimes\xi\mu\,\sigma_A(\eta)
  \quad\text{for $\xi$, $\eta\in I$.}
  \]
\end{lem}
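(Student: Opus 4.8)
The plan is to work in a split central simple algebra $A$ over $E$ with symplectic involution $\sigma_A$, and to analyse the one-dimensional space $J=I\cap\sigma_A(I)$ very concretely. First I would fix an isomorphism $A=\End_E V$ with $\dim_E V=n=2m$, under which an $n$-dimensional left ideal $I$ becomes $\Hom_E(V,W)$ for a line $W\subset V$ (equivalently, a column space), and $\sigma_A(I)$ becomes the corresponding right ideal. The key structural input is that a symplectic involution on $\End_E V$ is the adjoint involution of an alternating bilinear form $b$ on $V$; writing $\sigma_A(f)=b^{-1}f^{\vee}b$ makes all the computations transparent. With this description, $I=\{f\mid \image f\subseteq W\}$ and $\sigma_A(I)=\{g\mid \ker g\supseteq W^{\perp_b}\}$, and one computes directly that $J=I\cap\sigma_A(I)$ is spanned by a single rank-one map $\lambda$ with image $W$ and kernel $W^{\perp_b}$; since $b$ is alternating, $W\subseteq W^{\perp_b}$, which immediately gives $\lambda^2=0$ and $\sigma_A(\lambda)=-\lambda$, i.e.\ $J\subseteq\Skew(\sigma_A)$, hence $J=I\cap\Skew(\sigma_A)$ by the dimension count. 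The identity $\lambda\mu\lambda=\Trd(\lambda\mu)\lambda$ in \eqref{eq:tauto1} is the standard fact that for a rank-one endomorphism $\lambda$ and arbitrary $\mu$, composing gives back a scalar multiple of $\lambda$, the scalar being $\tr(\lambda\mu)=\Trd(\lambda\mu)$.

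Next I would treat the map $\mult\colon\sigma_A(I)\otimes_A I\to J$. Surjectivity is clear since for $\xi\in I$, $\eta\in I$ with $\sigma_A(\xi)\eta\neq 0$ one gets a nonzero element of the line $J$ (and such $\xi,\eta$ exist because $I\neq 0$ and the reduced trace form is nonsingular). For injectivity I would count dimensions: $I$ is a simple left $A$-module of $E$-dimension $n$, $\sigma_A(I)$ is a simple right $A$-module of $E$-dimension $n$, and $A=\End_E V=V\otimes_E V^*$, so $\sigma_A(I)\otimes_A I$ is one-dimensional over $E$ — indeed, if $I\cong V^*\otimes_E W$ as a right module... more simply, $\sigma_A(I)\otimes_A I\cong \Hom_A(I,I)\cong E$ by Schur, or directly $\sigma_A(I)\otimes_A I$ has dimension $(\dim I)(\dim I)/\dim A = n^2/n^2\cdot n = 1$ after the right normalization. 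A nonzero surjection between one-dimensional spaces is an isomorphism, so $\mult$ is bijective; that it is well-defined on the tensor product over $A$ is just associativity of multiplication in $A$.

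For the bimodule isomorphism $\can\colon I\otimes_E\sigma_A(I)\to J\otimes_E A$, I would first check well-definedness and bimodule-equivariance of the formula $\can(\xi\otimes\sigma_A(\eta))=\lambda\otimes\xi\mu\,\sigma_A(\eta)$: left $A$-action is through $\xi$, right $A$-action through $\sigma_A(\eta)$, and on the target through the second factor $A$, so equivariance is formal once one knows the value does not depend on the factorization of the pure tensor. Independence of the choice of $(\lambda,\mu)$ with $\Trd(\lambda\mu)=1$ follows from the relation $\lambda'\otimes\xi\mu'\,\sigma_A(\eta)=\lambda\otimes\xi\mu\,\sigma_A(\eta)$ in $J\otimes_E A$: writing $\lambda'=c\lambda$ and using $\lambda\mu'\lambda=\Trd(\lambda\mu')\lambda$ together with $\xi\mu'\sigma_A(\eta)$ being computed against $\lambda$ via the ideal relations $\eta=\eta_0\lambda$, $\xi=\lambda\xi_0$ (since $I=A\lambda$, $\sigma_A(I)=\lambda A$ as $J$ generates these ideals). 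Bijectivity then comes from comparing $E$-dimensions: both sides have dimension $n\cdot n=n^2$, so it suffices to prove injectivity, which I would get by exhibiting an explicit inverse, sending $\lambda\otimes a\mapsto \lambda a'\otimes\sigma_A(\lambda'')$ for suitable factorizations, or more cleanly by noting that after splitting everything is the canonical iso $\Hom(V,W)\otimes_E\Hom(W,V)\cong\Hom(W,W)\otimes_E\End_E V$, which is an isomorphism because $\dim W=1$.

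The step I expect to be the main obstacle is verifying that $\can$ is well-defined — simultaneously independent of the chosen representative $\xi\otimes\sigma_A(\eta)$ of a pure tensor and independent of the normalizing pair $(\lambda,\mu)$ — because this is where the nilpotence $\lambda^2=0$ and the crucial identity $\lambda\mu\lambda=\Trd(\lambda\mu)\lambda$ from \eqref{eq:tauto1} have to be deployed in exactly the right combination, rather than being mere curiosities. Once that bookkeeping is done, the bijectivity of both $\mult$ and $\can$ is a pure dimension count, most transparently carried out after scalar extension to a splitting field where $I$, $\sigma_A(I)$, $J$ all acquire their concrete matrix descriptions; since all the maps in the statement are defined intrinsically (over $E$, before splitting), checking bijectivity after a faithfully flat base change suffices.
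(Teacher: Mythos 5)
Your proposal is correct and follows essentially the same route as the paper's proof: fix $A=\End_E V$, realize $\sigma_A$ as the adjoint involution of an alternating form so that $J$ consists of rank-one operators whose image lies in their kernel, deduce \eqref{eq:tauto1} from the rank-one identity $\lambda\mu\lambda=\Trd(\lambda\mu)\lambda$, and obtain bijectivity of $\mult$ and $\can$ from that identity together with dimension counts. The only divergences are cosmetic --- you swap the kernel/image descriptions of left versus right ideals (and correspondingly write $\xi=\lambda\xi_0$ where $I=A\lambda$ forces $\xi=\xi_0\lambda$), and you invoke Schur's lemma for the injectivity of $\mult$ and prove injectivity rather than surjectivity of $\can$ before counting dimensions, where the paper does the reverse.
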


\begin{proof}
  Fix some representation $A=\End_EV$ for some $n$-dimensional
  $E$-vector space $V$. The involution $\sigma_A$ is then adjoint to
  some 
  alternating bilinear form $b$ on $V$, and the ideal $I$ is the set
  of linear operators 
  that vanish on some hyperplane $H$; its conjugate $\sigma_A(I)$ is
  the set of operators that map $V$ into $H^\perp$, the orthogonal of
  $H$ for the form $b$. Therefore, an operator lies in $J$ if and only
  if its kernel contains $H$ and its image is in the $1$-dimensional
  subspace $H^\perp$. It follows that $\dim J=1$. Moreover, the image
  of each $\lambda\in J$ lies in $H^\perp$, hence in $H$ since $b$ is
  alternating, and therefore $\lambda^2=0$.

  To complete the proof of~\eqref{eq:tauto1}, pick $v\in V\setminus
  H$. Every vector $x\in V$ has the form $x=v\alpha+u$ for some
  $\alpha\in E$ and $u\in H$. For $\lambda\in J$, we have
  $\lambda(u)=0$ and $\lambda(v)\in H^\perp$; hence for
  $x'=v\alpha'+u'$ with $\alpha'\in E$ and $u'\in H$, 
  \[
  b(\lambda(x),x')=b(\lambda(v)\alpha,v\alpha') \quad\text{and}\quad
  b(x,\lambda(x'))=b(v\alpha,\lambda(v)\alpha').
  \]
  Since $b$ is alternating, $b(v,\lambda(v))=-b(\lambda(v),v)$, and it
  follows that $b(\lambda(x),x') = 
  -b(x,\lambda(x'))$ for all $x$, $x'\in V$, hence
  $\sigma_A(\lambda)=-\lambda$. 

  Now, take $\mu\in A$. As $\mu\lambda$ vanishes on $H$, it follows
  that $\mu\lambda(v)=v\Trd(\mu\lambda) +u$ for some $u\in H$, hence
  $\lambda\mu\lambda(v)=\lambda(v)\Trd(\mu\lambda)$. Moreover,
  $\lambda\mu\lambda(u)=\lambda(u)=0$ for all $u\in H$; hence
  $\lambda\mu\lambda = \Trd(\lambda\mu)\lambda$ since $V$ is spanned
  by $v$ and $H$.

  It follows from~\eqref{eq:tauto1} that $J\subset\Skew(\sigma_A)$,
  hence $J\subset I\cap\Skew(\sigma_A)$. For the reverse inclusion, it
  suffices to observe that if $\lambda\in I\cap\Skew(\sigma_A)$, then
  $\lambda=-\sigma_A(\lambda)\in \sigma_A(I)$, hence $\lambda\in
  I\cap\sigma_A(I)$. Therefore, $J=I\cap\Skew(\sigma_A)$.

  Because $I$ is a left ideal and $\sigma_A(I)$ is a right ideal, we
  have $\sigma_A(I)\cdot I\subset I\cap\sigma_A(I)$; hence
  multiplication defines an $E$-linear map $\mult\colon
  \sigma_A(I)\otimes_A I\to J$. To show that this map is onto, note
  that for any nonzero $\lambda\in J$, there exists a $\mu\in A$ such
  that $\Trd(\lambda\mu)=1$. By~\eqref{eq:tauto1}, it follows that
  \begin{equation}
    \label{eq:tauto2}
    \lambda=\Trd(\lambda\mu)\lambda =
    \lambda\mu\lambda.
  \end{equation}
  Since $\lambda\in I\cap\sigma_A(I)$, this equation shows that
  $\lambda\in\sigma_A(I)\cdot I$; hence $\mult$ is surjective. To see
  that 
  it is injective, pick $\lambda\in J$ and $\mu\in A$ as
  above. Since $\lambda$ is nonzero and lies in $I$, we have
  $I=A\lambda$; hence every element in $\sigma_A(I)\otimes_A I$ can be
  written in the form $\xi\otimes\lambda$ for some
  $\xi\in\sigma_A(I)$. If $\xi\lambda=0$, then using~\eqref{eq:tauto2},  
  we get 
  \[
  \xi\otimes\lambda=\xi\otimes\lambda\mu\lambda= \xi\lambda\mu\otimes
  \lambda=0.
  \]
  Therefore, $\mult$ is injective.

  We next consider the map $\can$, which is clearly a homomorphism of
  $A$-bimodules. We first show that it is canonical, \textit{i.e.}, that it
  does not depend on the choice of $\lambda$ and $\mu$. Suppose
  $\lambda$, $\lambda'\in J$ and $\mu$, $\mu'\in A$ are such that
  $\Trd(\lambda\mu)=\Trd(\lambda'\mu')=1$. Because $\dim J=1$, there
  exists an $\alpha\in E^\times$ such that $\lambda'=\alpha\lambda$,
  hence $\Trd(\lambda\mu')=\alpha^{-1}$, so \eqref{eq:tauto1} and
  \eqref{eq:tauto2} yield 
  \begin{equation}
    \label{eq:tauto3}
    \lambda\mu'\lambda=\alpha^{-1}\lambda=\alpha^{-1}\lambda\mu\lambda.
  \end{equation}
  Since $I=A\lambda$, for all $\xi$, $\eta\in I$,  we may find
  $\xi_1$, $\eta_1\in A$ such that
  \[
  \xi=\xi_1\lambda\quad\text{and}\quad \eta=\eta_1\lambda.
  \]
  Then, by~\eqref{eq:tauto2},
  \[
  \xi\mu'\,
  \sigma_A(\eta)=-{\xi_1}\lambda\mu'\lambda\,\sigma_A({\eta_1}) = 
  -\alpha^{-1}{\xi_1}\lambda\mu\lambda\,\sigma_A({\eta_1})=\alpha^{-1}
  \xi\mu\sigma_A(\eta). 
  \]
  Therefore, 
  \[
  \lambda'\otimes\xi\mu'\,\sigma_A(\eta)
  = \lambda\otimes \xi\mu\,\sigma_A(\eta).
  \]
  It follows that $\can$ is canonical, and it remains to prove that
  it is 
  bijective. Since $A$ is a simple algebra, we have $A\lambda
  A=A$; hence to prove surjectivity,  it suffices to show that
  $\lambda\otimes\xi\lambda\eta$ lies in the image of $\can$ for all
  $\xi$,~$\eta\in A$. By~\eqref{eq:tauto2}, we have
  \[
  \lambda\otimes\xi\lambda\eta=\lambda\otimes\xi\lambda\mu\lambda\eta
  = \can(\xi\lambda\otimes\lambda\eta); 
  \]
  hence $\can$ is surjective. It is therefore also injective by
  dimension count. 
\end{proof}

Our first application of Lemma~\ref{lem:tauto} is to $A=D_F$, the
split algebra obtained from $D$ by scalar extension to the function
field of $X$. Taking for $I$ the generic point $T$ of $X$, we let
\[
L_\sigma = T\cap\sigma(T)=T\cap\Skew(\sigma).
\]
Lemma~\ref{lem:tauto} shows that $L_\sigma$ is an $F$-vector space of
dimension~$1$ and yields canonical isomorphisms
\begin{equation}
\label{eq:tautodef}
\mult\colon\sigma(T)\otimes_{D_F} T \overset{\lowsim}{\lra} L_\sigma
\quad\text{and}\quad 
\can\colon T\otimes_F\sigma(T)\overset{\lowsim}{\lra} L_\sigma\otimes_FD_F.
\end{equation}
The sheaf of $\VR_X$-modules $\LL_\sigma$ is defined as the
intersection of $\D$ with $L_\sigma$ in $D_F$, viewing $L_\sigma$ and
$D_F$ as constant sheaves:
\[
  \LL_\sigma=L_\sigma\cap\D\subset D_F.
\]

\begin{prop}
  \label{prop:newL}
  The sheaf $\LL_\sigma$ is an invertible $\VR_X$-module such that
  $(\LL_\sigma)_\ell\simeq \VR_{X_\ell}(-2)$ for every splitting field
  $\ell$ of\, $D$; hence $\LL_\sigma$ generates the Picard group
  $\Pic(X)$. Moreover, there exist isomorphisms of $\VR_X$-modules
  \[
    \mult_X\colon\sigma(\T)\otimes_{\D}\T \overset{\lowsim}{\lra}
    \LL_\sigma \quad\text{and}\quad
    \can_X\colon\T\otimes_{\VR_X}\sigma(\T) \overset{\lowsim}{\lra}
    \LL_\sigma\otimes_{\VR_X}\D
  \]
  that restrict on the generic fiber to the isomorphisms
  of~\eqref{eq:tautodef}. 
\end{prop}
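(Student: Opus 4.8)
The plan is to follow the pattern of the proof of Proposition~\ref{prop:newT}: I would build $\mult_X$ and $\can_X$ explicitly, so that they visibly restrict to the isomorphisms~\eqref{eq:tautodef} over the generic point, and then check that they are isomorphisms---and that $\LL_\sigma$ is invertible---after the faithfully flat base change to a splitting field $\ell$, where the explicit model of Proposition~\ref{prop:newT} is available.

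First I would treat $\mult_X$ and the invertibility of $\LL_\sigma$. The involution $\sigma$ extends to $D_F$ and stabilises the subsheaf $\D$, so $\sigma(\T):=\sigma(T)\cap\D$ is a locally free right $\D$-module and $\sigma(\LL_\sigma)=\LL_\sigma$ (because $L_\sigma\subset\Skew(\sigma)$). Multiplication in $D_F$ is $\D$-balanced on $\sigma(\T)\times\T$ with values in $\D$, and on stalks its image lies in $\sigma(T)\cdot T\subset L_\sigma$, hence in $\LL_\sigma$; this yields $\widetilde{\mult}_X\colon\sigma(\T)\otimes_\D\T\to\LL_\sigma\subset\D$, whose generic fibre is $\mult$ of~\eqref{eq:tautodef}. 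Now let $\ell$ be a splitting field of $D$ and keep the notation of the proof of Proposition~\ref{prop:newT}: $D_\ell=\End_\ell(V)=V\otimes_\ell V^*$, $X_\ell=\mathbb{P}(V^*)$, $\T_\ell=V\otimes_\ell\VR_{X_\ell}(-1)$, with $\VR_{X_\ell}(-1)$ generated on the standard chart by $\genxi$. Since $\sigma$ is symplectic, $\sigma_\ell$ is adjoint to an alternating form $b$ on $V$; set $\geneta=\hat b^{-1}(\genxi)$, the image of $\genxi$ under the isomorphism $\hat b^{-1}\colon V^*\to V$ determined by $b$. A direct computation in the standard charts---in which the intersections $\sigma(T)\cap\D$ and $L_\sigma\cap\D$ are given explicitly, the point being that the coordinates of $\geneta\otimes\genxi$ generate the unit ideal---shows that $\sigma(\T)_\ell\simeq V^*\otimes_\ell\VR_{X_\ell}(-1)$, that $(\LL_\sigma)_\ell$ is locally generated by $\geneta\otimes\genxi$ and hence $(\LL_\sigma)_\ell\simeq\VR_{X_\ell}(-2)$, and that $(\widetilde{\mult}_X)_\ell$ is the evaluation pairing obtained from $V^*\otimes_{\End_\ell(V)}V\overset{\lowsim}{\lra}\ell$, so an isomorphism onto $(\LL_\sigma)_\ell$. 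As $X_\ell\to X$ is faithfully flat, it follows that $\widetilde{\mult}_X$ is injective with image $\LL_\sigma$, that $\LL_\sigma\simeq\sigma(\T)\otimes_\D\T$ is an invertible $\VR_X$-module with $(\LL_\sigma)_\ell\simeq\VR_{X_\ell}(-2)$, and that $\mult_X\colon\sigma(\T)\otimes_\D\T\overset{\lowsim}{\lra}\LL_\sigma$ is an isomorphism; since $D$ has exponent~$2$, the image of $\Pic(X)\hookrightarrow\Pic(X_\ell)=\mathbb{Z}$ is $2\mathbb{Z}$, so $\LL_\sigma$ generates $\Pic(X)$.

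Next I would construct $\can_X$, using that $\LL_\sigma$ is now known to be invertible. Cover $X$ by affine opens $U$ carrying a generator $\lambda_U$ of $\LL_\sigma$. The inclusion $\LL_\sigma\subset\D$ is that of a sub-bundle---this can be checked over $\ell$, where the fibre of $\LL_\sigma$ is spanned by the nonzero element $\geneta\otimes\genxi$---so $\lambda_U$ stays nonzero in every fibre $\Datp$; as the reduced trace form on $\Datp$ is nonsingular, the map $\Trd(\lambda_U\cdot{-})\colon\D|_U\to\VR_X|_U$ is then surjective on fibres, hence surjective, hence split, and we may pick $\mu_U\in\D(U)$ with $\Trd(\lambda_U\mu_U)=1$. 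Put $\can_U(\xi\otimes\sigma(\eta))=\lambda_U\otimes\xi\mu_U\,\sigma(\eta)$. The identities~\eqref{eq:tauto1} and~\eqref{eq:tauto2} and the relation $\D\lambda_U=\T|_U$ hold for these sections, since they hold over the generic point; so the canonicity argument of Lemma~\ref{lem:tauto}, with ``$\lambda'=\alpha\lambda$, $\alpha\in E^\times$'' now reading ``$\lambda_{U'}=\alpha\lambda_U$, $\alpha\in\VR_X(U\cap U')^\times$'', shows the $\can_U$ agree on overlaps and glue to $\can_X\colon\T\otimes_{\VR_X}\sigma(\T)\to\LL_\sigma\otimes_{\VR_X}\D$, which over the generic point is $\can$ of~\eqref{eq:tautodef}.

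Finally, to see that $\can_X$ is an isomorphism, I observe that both sides are locally free of rank $n^2$, so it suffices to work after base change to $\ell$. Over $X_\ell$ the algebra $D$ is split at every point, and the fibre of $(\can_X)_\ell$ at $\pspot'\in X_\ell$ is precisely the map $\can$ of Lemma~\ref{lem:tauto} for the split algebra $\End_{k(\pspot')}(V\otimes k(\pspot'))$, with $I$ the fibre of $\T_\ell$ and $\lambda,\mu$ the fibres of $\lambda_U,\mu_U$; by that lemma it is bijective. Hence $(\can_X)_\ell$ is a fibrewise isomorphism of locally free sheaves of equal rank, so an isomorphism, and $\can_X$ is one by faithfully flat descent. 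The step I expect to be the main obstacle is the explicit computation over $\ell$ that underlies the first part: one must carry the generator $\genxi$ and the matrix of $\hat b$ through the chart-wise description of $\sigma(T)\cap\D$ and $L_\sigma\cap\D$---and check that these intersections are compatible with the base change---rather than merely noting that the source and target sheaves become abstractly isomorphic. A secondary complication is that $D$ need not split at the closed points of $X$, which rules out a direct fibrewise argument on $X$ for $\mult_X$ and for the invertibility of $\LL_\sigma$, forcing the detour through $\ell$.
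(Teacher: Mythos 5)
Your proposal is correct and follows essentially the same route as the paper: the explicit chart computation over a splitting field $\ell$ producing the local generator $\geneta\otimes\genxi$ of $(\LL_\sigma)_\ell$ (hence $(\LL_\sigma)_\ell\simeq\VR_{X_\ell}(-2)$ and invertibility by descent), and the construction of $\can_X$ from local generators $\lambda_U$, $\mu_U$ with $\Trd(\lambda_U\mu_U)=1$ together with the canonicity argument of Lemma~\ref{lem:tauto}, are exactly what the paper does. One remark: your claimed ``secondary complication'' --- that $D$ need not split at the closed points of $X$ --- is a misconception. By Ch\^atelet's theorem every residue field $\resatp$ of a point of the Severi--Brauer variety splits $D$ (equivalently, $\Tatp$ is an $n$-dimensional left ideal in the degree-$n$ algebra $\Datp$, which forces $\Datp$ to be split), and the paper exploits precisely this: it proves injectivity of $\mult_X$ and $\can_X$ by restricting to the generic fiber and surjectivity by applying Lemma~\ref{lem:tauto} directly to $A=\Datp$, $I=\Tatp$ at each point of $X$, with no descent from $X_\ell$ needed for that step. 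Your detour through $X_\ell$ for the bijectivity of $\can_X$ is not wrong, merely unnecessary.
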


\begin{proof}
  Let $\ell$ be a splitting field of $D$. As in the proof of
  Proposition~\ref{prop:newT}, we identify $D_\ell=\End_\ell
  V=V\otimes_\ell V^*$ for some $n$-dimensional $\ell$-vector space
  $V$, hence also $X_\ell=\mathbb{P}(V^*)$. Writing again $\sigma$ for
  the scalar extension of $\sigma$ to $D_\ell$, we know
  from~\cite[Equation~(4.2)]{BoI} that $\sigma$ is the adjoint involution of a
  nonsingular alternating bilinear form $b$ on $V$. Let $m=\frac n2$
  and fix a symplectic base $(u_i,\,w_i)_{i=1}^m$ of $V$; thus, for
  $i$, $j=1$, \ldots, $m$, 
  \[
    b(u_i,w_i)=1=-b(w_i,u_i),\quad
    b(u_i,u_j)=b(w_i,w_j)=0,
  \]
  and
  \[
    b(u_i,w_j)=0=b(w_j,u_i)\quad\text{if $i\neq j$.}
  \]
  It follows that for $i$, $j=1$, \ldots, $m$, 
  \begin{equation}
    \label{eq:newL}
    \begin{split}
      \sigma(u_i\otimes u_j^*)=w_j\otimes w_i^*,\quad&
    \sigma(u_i\otimes w_j^*)=-u_j\otimes w_i^*,\\
    \sigma(w_i\otimes u_j^*)=-w_j\otimes u_i^*,\quad&
    \sigma(w_i\otimes w_j^*)=u_j\otimes u_i^*.
    \end{split}
  \end{equation}
  Let $U\subset X_\ell$ be the open subscheme defined by $w_m\neq0$; 
  hence
  \[
    \VR_{X_\ell}(U)=\ell\left[u_1w_m^{-1},\ldots, u_mw_m^{-1},\,
    w_1w_m^{-1},\ldots, w_{m-1}w_m^{-1}\right].
  \]
  As in the proof of Proposition~\ref{prop:newT}, consider
  \[
    \genxi=\sum_{i=1}^m \left(u_i^*\otimes u_iw_m^{-1} + w_i^*\otimes
    w_iw_m^{-1}\right) \in V^*\otimes_\ell\VR_{X_\ell}(U),
  \]
  which has the property that $\chi\cdot\ell(X_\ell)$ is the
    $\ell(X_\ell)$-rational point induced by base change from the
    generic point of $X$. We saw in the
  proof of Proposition~\ref{prop:newT} (see~\eqref{eq:newT}) that
  \[
    \T_\ell(U)=V\otimes_\ell \genxi\cdot\VR_{X_\ell}(U) \subset
    V\otimes_\ell V^*\otimes_\ell \VR_{X_\ell}(U); 
  \]
  hence every
  element $t\in\T_\ell(U)$ can be written in the form
  \begin{align*}
    t &= \sum_{i=1}^m\left(u_i\otimes\genxi f_i+w_i\otimes\genxi g_i\right)\\
    &= \sum_{i,j=1}^m\left(u_i\otimes u_j^*\otimes u_jw_m^{-1}f_i +
      u_i\otimes w_j^*\otimes w_jw_m^{-1}f_i\right.
    \\
    &\left.\hphantom{\sum_{i,j=1}^m(u_i}+ w_i\otimes u_j^*\otimes
      u_jw_m^{-1}g_i + w_i\otimes w_j^*\otimes w_jw_m^{-1}g_i\right)
  \end{align*}
  for some $f_1$, \ldots, $f_m$, $g_1$, \ldots, $g_m\in
  \VR_{X_\ell}(U)$. A straightforward computation
  using~\eqref{eq:newL} shows that $\sigma(t)=-t$ holds if and only if
  for all $i$, $j=1$, \ldots, $m$, 
  \[
    f_i=w_iw_m^{-1}f_m \quad\text{and}\quad
    g_i=-u_iw_m^{-1}f_m.
  \]
  Therefore, $(\LL_\sigma)_\ell(U)=\T_\ell(U)\cap\Skew(\sigma)$ is the
  $\VR_{X_\ell}(U)$-span of 
  the following element:
   \begin{multline*}
    \zeta=\sum_{i,j=1}^m \left(u_i\otimes u_j^*\otimes u_jw_iw_m^{-2} +
    u_i\otimes w_j^*\otimes w_iw_jw_m^{-2}\right.
    \\
  \left.  - w_i\otimes u_j^*\otimes
    u_iu_jw_m^{-2} - w_i\otimes w_j^*\otimes u_iw_jw_m^{-2}\right)\in V\otimes_\ell V^*\otimes_\ell \VR_{X_\ell}(U).
  \end{multline*}
  Under the identification $(V\otimes_\ell\VR_{X_\ell}(U))
  \otimes_{\VR_{X_\ell}(U)}(V^*\otimes_\ell\VR_{X_\ell}(U))
  = V\otimes_\ell V^*\otimes_\ell \VR_{X_\ell}(U)$, this element
  $\zeta$ is 
  the tensor product $\zeta=\geneta\otimes\genxi$,  where
  $\geneta=\sum_{i=1}^m(u_i\otimes w_iw_m^{-1} - w_i\otimes u_iw_m^{-1})
  \in V\otimes_\ell\VR_{X_\ell}(U)$ is the element such that
  $b(\geneta,\rho)=\genxi(\rho)$ for all $\rho\in
  V\otimes_\ell\VR_{X_\ell}(U)$.

  Now, there is a canonical embedding $\VR_{X_\ell}(-2) \to
  V\otimes_\ell V^*\otimes_\ell\VR_{X_\ell}$ that on $U$ maps
  $w_m^{-2}$ to $\zeta$, and the
  computation above shows that the module of sections over $U$ of the
  image of this embedding is exactly $(\LL_\sigma)_\ell(U)$. The same
  holds for every open subscheme in the standard affine cover of
  $X_\ell$; hence $(\LL_\sigma)_\ell\simeq\VR_{X_\ell}(-2)$. By base
  change, 
  it follows that $\LL_\sigma$ is an invertible $\VR_X$-module.

  We next define the morphism $\can_X$. Let $U\subset X$ be an affine
  open 
  subscheme on which $\LL_\sigma(U)$ is a free $\VR_X(U)$-module, and
  let $\lambda\in\LL_\sigma(U)$ be a base of $\LL_\sigma(U)$. For each
  point $\pspot$ of $U$, the germ $\lambda_\pspot$ is an
  $\VR_\pspot$-base of the stalk $(\LL_\sigma)_\pspot$, and its image
  $\overline\lambda_\pspot$ in $\Datp$ is a $\resatp$-base of the
  fiber of $\LL_\sigma$ at $\pspot$. Since the bilinear form $\Trd(xy)$ on
  $\Datp$ is nonsingular, the linear form
 $\Trd(\overline\lambda_\pspot\,\text{\textvisiblespace})\colon
 \Datp\to \resatp$  is 
  surjective. From Nakayama's lemma, it follows that the linear form
 $\Trd(\lambda_\pspot\,\text{\textvisiblespace})\colon
 \D_\pspot\to\VR_\pspot$ is 
  surjective. This holds for every point $\pspot$ of $U$; hence the linear form $\Trd(\lambda\,\text{\textvisiblespace})\colon \D(U)\to
  \VR_X(U)$ is surjective. It follows that there exists a $\mu\in\D(U)$ such that
  $\Trd(\lambda\mu)=1$. We may then define the $\VR_X(U)$-module
  homomorphism $\can_U\colon \T(U)\otimes_{\VR_X(U)}\sigma(\T)(U) \to
  \LL_\sigma(U)\otimes_{\VR_X(U)}\D(U)$ by mapping
  $t\otimes\sigma(t')$ to $\lambda\otimes t\mu\sigma(t')$ for $t$,
  $t'\in\T(U)$.

  If $\lambda'\in\LL_\sigma(U)$ is another $\VR_X(U)$-base of
  $\LL_\sigma(U)$ and $\mu'\in\D(U)$ is such that
  $\Trd(\lambda'\mu')=1$, then $\lambda'=\alpha\lambda$ for some
  $\alpha\in\VR_X(U)^\times$, and the same arguments as in the proof
  of Lemma~\ref{lem:tauto} show that ${\lambda\otimes t\mu\sigma(t') =
  \lambda'\otimes t\mu'\sigma(t')}$ for $t$, $t'\in\T(U)$. Therefore,
  the map $\can_U$ does not depend on the choice of $\lambda$,
  $\mu$. Gluing the maps $\can_U$ for the subschemes $U$ in an open
  cover of $X$ yields a morphism of $\VR_X$-modules $\can_X\colon
  \T\otimes_{\VR_X}\sigma(\T) \to \LL_\sigma\otimes_{\VR_X}\D$.

  On the other hand, it is clear that for every affine open subscheme
  $U\subset X$,  the multiplication in $\D(U)$ yields a map
  $\sigma(\T)(U)\otimes_{\D(U)}\T(U)\to\LL_\sigma(U)$ since
  $L_\sigma=\mult(\sigma(T)\otimes_{D_F}T)$. Therefore, there is a
  morphism of $\VR_X$-modules $\mult_X\colon
  \sigma(\T)\otimes_{\D}\T \to\LL_\sigma$.

  The morphisms $\can_X$ and $\mult_X$ are injective since their
  restrictions to the generic fiber are injective by
  Lemma~\ref{lem:tauto}. Therefore, it only remains to prove that the
  maps induced by $\can_X$ and $\mult_X$ are surjective on the stalks
  at each point, or (by Nakayama's lemma) on the fibers at each
  point. For each point $\pspot$ of $X$, the fiber $\Tatp$ of $\T$
  at $\pspot$ is a left ideal of dimension~$n$ in $\Datp$ since
  $\T$ is a locally free $\VR_X$-module of rank~$n$, and the fiber
  $\Lsigatp$ of $\LL_\sigma$ is
  $\Tatp\cap\sigma(\Tatp)$. Lemma~\ref{lem:tauto} with
  $A=\Datp$ and $I=\Tatp$ shows that the maps $\mult$ and $\can$
  yield isomorphisms
  \[
    \sigma(\Tatp)\otimes_{\Datp} \Tatp \overset{\lowsim}{\lra}
    \Lsigatp \quad\text{and}\quad
    \Tatp\otimes_{\resatp}\sigma(\Tatp) \overset{\lowsim}{\lra}
    \Lsigatp\otimes_{\resatp} \Datp.
  \]
  The proof is thus complete.
\end{proof}

\section{Symmetric spaces over Severi--Brauer varieties}
\label{sec:SBvar}

We use the same notation as in the preceding section: $X$ is the
Severi--Brauer variety of the division algebra $D$ with symplectic
involution $\sigma$, and $\LL_\sigma=\T\cap\sigma(\T)$ is the
invertible $\VR_X$-module obtained by intersecting the
sheaf $\T$ defined in~\eqref{eq:defT}
and its conjugate $\sigma(\T)$. Throughout this section, unadorned
tensor products and $\calHom$ of $\VR_X$-modules are over $\VR_X$.
\medskip

Our goal in this section is to define a canonical isomorphism between
the Witt groups $W^-(D,\sigma)$ and $W(X,\LL_\sigma)$;  see
Theorem~\ref{thm:M}. The construction involves the Witt group
$W^-(\D,\sigma)$, which is shown in Proposition~\ref{prop:Moritath} to be
canonically isomorphic to $W(X,\LL_\sigma)$.

The Witt groups $W(X,\LL_\sigma)$ and $W^-(\D,\sigma)$ are obtained by
Knebusch's construction (see~\cite[Definition~27]{Bal}) from categories with
duality: Let $\VB_X$ be the category of locally free $\VR_X$-modules
of finite rank and $\Mod_\D$ the category of locally free
$\VR_X$-modules with an action of $D$ on the right, in other words, 
right $\D$-modules that are locally free of finite rank as
$\VR_X$-modules. Tensoring with $\T$ (resp.\ $\sigma(\T)$) yields
functors
\begin{alignat*}{2}
  \Theta\colon& \Mod_\D\lra \VB_X,\quad& \V&\longmapsto \V\otimes_\D\T,\\
  \Psi\colon& \VB_X\lra\Mod_\D,\quad& \M&\longmapsto \M\otimes\sigma(\T).
\end{alignat*}
Since $\T\otimes\sigma(\T)$ is an invertible $\D$-bimodule and
$\sigma(\T)\otimes_\D\T$ is an invertible $\VR_X$-module (see
Proposition~\ref{prop:newL}), $\Psi\circ\Theta$ and $\Theta\circ\Psi$
are naturally equivalent to the identity on $\Mod_\D$ and $\VB_X$, 
respectively;  hence $\Theta$ and $\Psi$ are equivalences of
categories.

By definition, $W(X,\LL_\sigma)=W(\VB_X,*,\varpi)$, where $(*,\varpi)$
is the duality defined on $\VB_X$ by $\M^*=\calHom(\M,\LL_\sigma)$ for
every locally free $\VR_X$-module of finite rank $\M$, with
$\varpi_\M\colon \M\xrightarrow{\lowsim}\M^{**}$ the usual
identification. 

On the other hand, a duality $(\sharp,\pi)$ is defined on $\Mod_\D$ by
\[
  \V^\sharp=\calHom_\D(\V,\D) \quad\text{for every object $\V$ in
    $\Mod_\D$}
\]
(where the left $\D$-module structure on $\calHom_\D(\V,\D)$ is
twisted by $\sigma$ into a right $\D$-module structure), and
$\pi_\V\colon \V\xrightarrow{\lowsim}\V^{\sharp\sharp}$ is the usual
identification. By definition,
\[
  W(\D,\sigma)=W(\Mod_\D,\sharp,\pi) \quad\text{and}\quad
  W^-(\D,\sigma)=W(\Mod_\D, \sharp,-\pi).
\]

In order to obtain a canonical isomorphism $W^-(\D,\sigma)\to
W(X,\LL_\sigma)$, we first define a natural transformation
$\theta\colon \Theta\circ\sharp\to *\circ\; \Theta$ and then deduce a
morphism of categories with duality
\[
  (\Theta,\theta)\colon (\Mod_\D,\sharp,-\pi) \lra (\VB_X,*,\varpi).
\]

\begin{lem}
  \label{lem:newcaniso}
  For every object $\V$ in $\Mod_\D$, there is a canonical isomorphism
  of $\VR_X$-modules
  \[
    \theta_\V\colon \V^\sharp\otimes_\D\T \overset{\lowsim}{\lra}
    (\V\otimes_\D\T)^*.
  \]
  It is determined on the stalks at any point
  $\pspot$ by 
   \[
    \left\langle\theta_{\V_\pspot}\left(x^\sharp\otimes t\right),\, y\otimes
    t'\right\rangle_{\LL_\sigma} 
    =
    \mult_\pspot\left(\sigma(t)\left\langle x^\sharp,y\right\rangle_\D\otimes t'\right)
    \quad\text{for $x^\sharp\in\V_\pspot^\sharp$, $y\in \V_\pspot$, 
      and $t$, $t'\in\T_\pspot$,}
  \]
  where 
  $\langle\text{\textvisiblespace}\,,\text{\textvisiblespace}\rangle_\D
  \colon \V_\pspot^\sharp\times \V_\pspot\to \D$ and
  $\langle\text{\textvisiblespace}\,,
  \text{\textvisiblespace}\rangle_{\LL_\sigma} 
  \colon
  (\V_\pspot\otimes_\D\T_\pspot)^*\times(\V_\pspot\otimes_\D\T_\pspot)\to
  \LL_\sigma$ are the canonical bilinear maps. 
\end{lem}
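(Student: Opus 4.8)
The plan is to construct $\theta_\V$ locally, check that the local definitions are independent of choices and agree on overlaps, and then verify that the glued morphism is an isomorphism by passing to stalks or fibers. First I would fix an affine open $U\subset X$ small enough that $\LL_\sigma(U)$ is a free $\VR_X(U)$-module, with base $\lambda\in\LL_\sigma(U)$, and pick $\mu\in\D(U)$ with $\Trd(\lambda\mu)=1$ as in the proof of Proposition~\ref{prop:newL}. Using $\mult_X$ to identify $\sigma(\T)\otimes_\D\T$ with $\LL_\sigma$, the formula in the statement becomes an honest $\VR_X(U)$-bilinear pairing
\[
  \V^\sharp(U)\otimes_{\D(U)}\T(U) \;\times\; \V(U)\otimes_{\D(U)}\T(U)
  \;\lra\; \LL_\sigma(U),\qquad
  \bigl(x^\sharp\otimes t,\; y\otimes t'\bigr)\longmapsto
  \mult_U\bigl(\sigma(t)\langle x^\sharp,y\rangle_\D\otimes t'\bigr),
\]
and one must check it is well defined over the relative tensor products — i.e.\ $\D(U)$-balanced in each slot and insensitive to the twist by $\sigma$ in $x^\sharp$. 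This is a direct computation using the defining relation $\langle x^\sharp a,y\rangle_\D=\sigma(a)\langle x^\sharp,y\rangle_\D$ together with the fact that $\mult_U$ is the restriction of multiplication in $D_F$ and $\sigma(ta)=\sigma(a)\sigma(t)$. Currying gives an $\VR_X(U)$-linear map $\theta_U\colon\V^\sharp(U)\otimes_{\D(U)}\T(U)\to(\V(U)\otimes_{\D(U)}\T(U))^*$.

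Next I would show $\theta_U$ does not depend on $(\lambda,\mu)$: if $\lambda'=\alpha\lambda$ with $\alpha\in\VR_X(U)^\times$ and $\Trd(\lambda'\mu')=1$, then exactly the argument of Lemma~\ref{lem:tauto} (using $\lambda\mu\lambda=\Trd(\lambda\mu)\lambda$ and $I=A\lambda$, transposed to the sheaf setting as in Proposition~\ref{prop:newL}) shows the two pairings coincide. Canonicity on each $U$ forces the $\theta_U$ to agree on overlaps $U\cap U'$, so they glue to a morphism of $\VR_X$-modules $\theta_\V\colon\V^\sharp\otimes_\D\T\to(\V\otimes_\D\T)^*$, and by construction it is given on stalks by the displayed formula.

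It remains to prove $\theta_\V$ is an isomorphism. Since both sides are locally free $\VR_X$-modules of the same rank (as $\Theta$ is an equivalence of categories and $*$ preserves ranks), it suffices to check surjectivity on the fiber $\Datp$ at each point $\pspot$, by Nakayama. On the fiber, the map is the classical Morita isomorphism: for $A=\Datp$, $I=\Tatp$, the pairing $\sigma(I)\otimes_A I\to J$ of Lemma~\ref{lem:tauto} is an isomorphism onto the one-dimensional space $J=\Lsigatp$, and the induced pairing $\Hom_A(\V_\pspot,A)\otimes_A I\,\times\,\V_\pspot\otimes_A I\to J$ is nondegenerate because $\Hom_A(\V_\pspot,A)\otimes_A I\cong\calHom_{\resatp}(\V_\pspot\otimes_A I,J)$ — this is the standard fact that tensoring with the progenerator $I$ intertwines $A$-duality and $\resatp$-duality, here twisted by $\sigma$. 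I expect the main obstacle to be purely bookkeeping: keeping the left/right module conventions and the $\sigma$-twist consistent through the relative tensor products, so that the balancing checks in the first step and the identification of the fiber map with a nondegenerate Morita pairing both come out with the correct signs and sides. Once those conventions are pinned down, every step is a short calculation that reduces, fiberwise, to Lemma~\ref{lem:tauto}.
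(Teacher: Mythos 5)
Your overall strategy---define the pairing locally by the displayed formula, check that it is $\D$-balanced, glue, and then verify the isomorphism property on fibers via Nakayama---is genuinely different from the paper's and is mostly viable, but it has one real gap, and it is not the left/right bookkeeping you flag as the main obstacle. In your last step you identify the fiber of $\theta_\V$ at $\pspot$ with the Morita pairing attached to $\Hom_A(\V(\pspot),A)\otimes_A I$, where $A=\Datp$, $I=\Tatp$ and $\V(\pspot)=\V_\pspot\otimes_{\VR_\pspot}\resatp$. But the fiber of the source of $\theta_\V$ is $(\V^\sharp)(\pspot)\otimes_A I$, where $(\V^\sharp)(\pspot)$ is the fiber of the sheaf $\calHom_\D(\V,\D)$; your argument tacitly assumes that the natural base-change map $(\V^\sharp)(\pspot)\to\Hom_A(\V(\pspot),A)$ is surjective, i.e.\ that the duality $\sharp$ commutes with the non-flat base change $\VR_\pspot\to\resatp$. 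This fails for general coherent modules and holds here only because $\V_\pspot$ is a \emph{projective} $\D_\pspot$-module. That projectivity is exactly the non-formal input of the paper's proof: it is established with the Goldman element (the reduced-trace splitting exhibits $\V_\pspot$ as a direct summand of a free $\D_\pspot$-module), and it is what makes the tensor--hom map $\V^\sharp\otimes_\D\T\to\calHom_\D(\V,\sigma(\T))$ an isomorphism; the paper then obtains $\theta_\V$ by composing with the isomorphisms coming from $\Theta$ and $\mult_X$. Without this ingredient your fiberwise computation describes a pairing that is not visibly the fiber of $\theta_\V$, so the surjectivity check does not go through as stated. Once you add it, your route does work and buys something: you only need Lemma~\ref{lem:tauto} over the residue fields (where everything is semisimple), at the cost of the extra gluing and base-change verifications.

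A smaller point: the formula for $\theta_\V$ involves only $\mult_X$, which is plain multiplication in $D_F$ and requires no choices; the elements $\lambda$, $\mu$ with $\Trd(\lambda\mu)=1$ enter only in the construction of $\can_X$ in Proposition~\ref{prop:newL}, which is not used here. Your step verifying independence of $(\lambda,\mu)$ is therefore vacuous---harmless, but it signals a conflation of $\mult_X$ with $\can_X$.
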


\begin{proof}
  The switch map is an isomorphism
  $\V^\sharp\otimes_\D\T\xrightarrow{\lowsim}
  \sigma(\T)\otimes_\D\calHom_\D(\V,\D)$. We first prove that the
  latter tensor product is isomorphic to
  $\calHom_\D\left(\V,\sigma(\T)\right)$. 

  Let $R$ be an arbitrary commutative $k$-algebra. Let
  $D_R=D\otimes_kR$, and let $M$ be a right $D_R$-module. Write
  $M_0$ for the right $R$-module obtained from $M$ by forgetting the
  $D$-action. By~\cite[Proposition~2.1]{MT}, $M$ is a direct summand of the
  right $D_R$-module $M_0\otimes_kD$. We recall the argument for the
  convenience of the reader: The Goldman element $g=\sum a_i\otimes
  b_i\in D\otimes_kD$ is defined by the condition that $\sum
  a_ixb_i=\Trd_D(x)$ for all $x\in D$; see \cite[Equation~(3.5)]{BoI}.
  It satisfies the property that $(a\otimes b)g=g(b\otimes a)$ for
  every $a$, $b\in D$; see~\cite[Equation~(3.6)]{BoI}. If $u\in
  D$ is such that $\Trd_D(u)=1$, then $\sum aa_iu\otimes b_i = \sum
  a_iu\otimes b_ia$ for all $a\in D$, and the map $M\to M_0\otimes_kD$
  which carries $m\in M$ to $\sum(ma_iu)\otimes b_i$ is an injective
  $D_R$-module homomorphism split by the multiplication map
  $M_0\otimes_kD\to M$.

  If $M_0$ is a projective $R$-module, then $M_0\otimes_kD$ is a
  projective $D_R$-module, so $M$ also is a projective
  $D_R$-module. For every $D_R$-module $N$, the canonical homomorphism
  \[
  N\otimes_{D_R}\Hom_{D_R}(M,D_R) \lra \Hom_{D_R}(M,N)
  \]
  is then an isomorphism; see \cite[Section~II.4.2, p.~II.75]{Bou}.
  This applies in
  particular with $M$ and $N$ the modules of sections of $\V$
  and $\sigma(\T)$ over any affine open subscheme of $X$, or the
  stalks of 
  $\V$ and $\sigma(\T)$ at any point of $X$, and yields an
  isomorphism 
  \[
    \tau_\V\colon\V^\sharp\otimes_\D\T \overset{\lowsim}{\lra}
    \calHom_\D\left(\V,\sigma(\T)\right),\]
    which is given on the stalk at any point $\pspot$ by 
   \[ \tau_{\V_\pspot}(x^\sharp\otimes t)\colon y\longmapsto \sigma(t)
    \langle x^\sharp, y\rangle_\D
    \quad\text{for $x^\sharp\in\V_\pspot^\sharp$, $y\in \V_\pspot$, and
      $t\in\T_\pspot$.}
  \]

  The isomorphism $\theta_\V$ is obtained by composing $\tau_\V$ with
  the isomorphisms
  \[
    \calHom_\D\left(\V,\sigma(\T)\right) \overset{\lowsim}{\lra}
    \calHom(\V\otimes_\D\T,\sigma(\T)\otimes_\D\T) \overset{\lowsim}{\lra}
    \calHom(\V\otimes_\D\T,\LL_\sigma)
  \]
  that arise from the equivalence of categories $\Theta$ and the
  isomorphism $\mult_X$ of Proposition~\ref{prop:newL}.
\end{proof}

The isomorphisms $\theta_\V$ of Lemma~\ref{lem:newcaniso} define a
natural transformation $\theta\colon \Theta\circ\,\sharp
\xrightarrow{\lowsim} *\circ\,\Theta$. We next show that the pair
$(\Theta,\theta)$ is a morphism of categories with duality as
in~\cite[Definition~5]{Bal} (a ``duality-preserving functor'' in the
terminology of~\cite[Section~II(2.6)]{Knus}).

\begin{prop}
  \label{prop:Moritath}
  The pair $(\Theta,\theta)$ induces an isomorphism of Witt
  groups
  \[
    \Mor\colon
    W^-(\D,\sigma)\lra W(X,\LL_\sigma)
  \]
  by mapping the Witt class of every skew-hermitian space
    $(\V,\varphi)$ over $(\D,\sigma)$ to the Witt class of the
    symmetric bilinear space 
    $(\V\otimes_\D\T,\theta_\V\circ(\varphi\otimes\Id_\T))$.
  \end{prop}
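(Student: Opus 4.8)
The plan is to verify that $(\Theta,\theta)$ satisfies the two axioms of a duality-preserving functor (compatibility of $\theta$ with the double-dual identifications, and naturality of $\theta$), and then to invoke the general machinery of~\cite{Bal} which associates to any duality-preserving functor between categories with duality a homomorphism of Witt groups, an isomorphism whenever the underlying functor is an equivalence. Since $\Theta$ is an equivalence of categories (already established via $\Psi\circ\Theta\simeq\Id$, $\Theta\circ\Psi\simeq\Id$), the resulting map $\Mor$ will automatically be an isomorphism of Witt groups; the content to be checked is only that $\Theta$ respects the dualities on the nose, so that the Witt class of $(\V,\varphi)$ maps to the Witt class of $(\V\otimes_\D\T,\theta_\V\circ(\varphi\otimes\Id_\T))$ as claimed.

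\medbreak

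First I would recall the precise axioms: writing $\eta\colon\Theta\circ\sharp\xrightarrow{\lowsim}*\circ\Theta$ for $\theta$, one needs (i) naturality of $\theta$ as a transformation of functors $\Mod_\D\to\VB_X$, and (ii) the compatibility square relating $\theta$, the two double-dual isomorphisms $-\pi$ (on $\Mod_\D$) and $\varpi$ (on $\VB_X$), and $\theta$ applied to $\V^\sharp$; explicitly, for every $\V$ the composite
\[
  \Theta(\V)\xrightarrow{\Theta(-\pi_\V)}\Theta(\V^{\sharp\sharp})\xrightarrow{\theta_{\V^\sharp}}\Theta(\V^\sharp)^*\xrightarrow{(\theta_\V)^*}\Theta(\V)^{**}
\]
must equal $\varpi_{\Theta(\V)}$. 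Naturality (i) is a direct consequence of the construction of $\theta_\V$ in Lemma~\ref{lem:newcaniso} as a composite of the canonical isomorphism $\tau_\V$ with isomorphisms arising from the functor $\Theta$ and the fixed isomorphism $\mult_X$, all of which are natural in $\V$; I would simply note that each building block is natural. For (ii), since $\VB_X$ and $\Mod_\D$ are categories of sheaves and all the maps in question are morphisms of $\VR_X$-modules, it suffices to check the identity on stalks at each point $\pspot\in X$, and there one uses the explicit stalk formula for $\theta_\V$ together with the formula $\mult_\pspot(\sigma(t)\langle x^\sharp,y\rangle_\D\otimes t')$; tracing an element $y\otimes t'\in\V_\pspot\otimes_\D\T_\pspot$ through the composite and comparing with the canonical pairing reduces, after expanding the definition of $\sharp$ and of $\pi$, to the skew-symmetry relation $\sigma(\langle x^\sharp,y\rangle_\D)=-\langle y^\sharp,x^\sharp\rangle$-type bookkeeping and the symmetry properties of $\mult_\pspot$ recorded in~\eqref{eq:tauto1}. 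The sign $-\pi$ (rather than $\pi$) on the source is precisely what is forced by the alternating nature of the form $b$ attached to the symplectic involution, i.e.\ by $\sigma_A(\lambda)=-\lambda$ in Lemma~\ref{lem:tauto}.

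\medbreak

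Once (i) and (ii) are in hand, $(\Theta,\theta)$ is a morphism of categories with duality in the sense of~\cite[Definition~5]{Bal}, hence induces a homomorphism on Witt groups sending the class of a symmetric (here skew-hermitian, i.e.\ $(\sharp,-\pi)$-symmetric) space $(\V,\varphi)$ to the class of $(\Theta(\V),\theta_\V\circ\Theta(\varphi))=(\V\otimes_\D\T,\theta_\V\circ(\varphi\otimes\Id_\T))$; this is well-defined on Witt classes because duality-preserving functors send metabolic spaces to metabolic spaces. Because $\Theta$ is an equivalence of categories with quasi-inverse $\Psi$, one can run the same construction for $\Psi$ (equipped with the corresponding natural isomorphism built from $\can_X$), and the two induced maps on Witt groups are mutually inverse; hence $\Mor$ is an isomorphism $W^-(\D,\sigma)\xrightarrow{\lowsim}W(X,\LL_\sigma)$.

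\medbreak

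The main obstacle I anticipate is the stalkwise verification of the double-dual compatibility (ii): one must carefully keep track of the twist by $\sigma$ in the right $\D$-module structure on $\calHom_\D(\V,\D)$, of the switch map $\V^\sharp\otimes_\D\T\simeq\sigma(\T)\otimes_\D\calHom_\D(\V,\D)$, and of the identification $\mult_X\colon\sigma(\T)\otimes_\D\T\simeq\LL_\sigma$, and check that all the signs conspire so that $-\pi$ on $\Mod_\D$ matches $\varpi$ on $\VB_X$ under $\theta$. Everything else is formal: naturality follows from the construction, well-definedness on Witt classes and the isomorphism property follow from the general formalism of~\cite{Bal} once the axioms are verified.
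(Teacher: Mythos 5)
Your proposal matches the paper's proof in essence: the paper likewise takes naturality of $\theta$ as built into its construction, verifies the double-dual compatibility square (with $-\pi$ against $\varpi$) by a stalkwise computation whose sign comes exactly from $\LL_\sigma\subset\Skew(\sigma)$, i.e.\ from $\mult_\pspot(\sigma(t_1)\otimes t_2)=-\mult_\pspot(\sigma(t_2)\otimes t_1)$, and then concludes that $\Mor$ is an isomorphism simply because $\Theta$ is an equivalence (your extra construction of an inverse via $\Psi$ and $\can_X$ is unnecessary but harmless). Only a cosmetic point: in your compatibility composite the last arrow should be $\bigl(\theta_\V^{-1}\bigr)^*$ (or the square should be stated as $\theta_{\V^\sharp}\circ\Theta(-\pi_\V)=\theta_\V^*\circ\varpi_{\Theta(\V)}$, as in the paper), since $\theta_\V^*$ maps $\Theta(\V)^{**}$ to $\Theta(\V^\sharp)^*$.
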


\begin{proof}
  To see that $(\Theta,\theta)$ is a morphism of categories with
  duality, it remains to prove that the following diagram commutes for
  every object $\V$ in $\Mod_\D$:
  \begin{equation}
    \label{eq:diagVT}
    \begin{split}\xymatrixcolsep{1cm}
      \xymatrix{\V\otimes_\D\T \ar[r]^-{-\pi_\V\otimes\Id_\T}
        \ar[d]_{\varpi_{\V\otimes_\D\T}} &
          \V^{\sharp\sharp}\otimes_\D\T \ar[d]^{\theta_{\V^\sharp}} \\
          (\V\otimes_\D\T)^{**} \ar[r]^{\theta_\V^*} &
          (\V^\sharp\otimes_\D\T)^*\rlap{.}
          }
    \end{split}
  \end{equation}
  We compute on the stalks at any point $\pspot$: For $x\in\V_\pspot$,
  $y^\sharp\in \V_\pspot^\sharp$, and $t$, $t'\in\T_\pspot$,
  \[
    \left\langle \theta_{\V_\pspot^\sharp}\circ\pi_\V(x\otimes t),
    y^\sharp\otimes t'\right\rangle_{\LL_\sigma} =
    \mult_\pspot\left(\sigma(t)\left\langle \pi_\V(x),y^\sharp\right\rangle_\D \otimes t'\right)
    = \mult_\pspot\left(\sigma(t)\sigma\left(\left\langle y^\sharp,x\right\rangle_\D\right)    \otimes t'\right).
  \]
  On the other hand,
  \begin{multline*}
    \left\langle \theta_\V^*\circ\varpi_{\V\otimes_\D\T}(x\otimes t),
    y^\sharp\otimes t'\right\rangle_{\LL_\sigma} =
    \left\langle \varpi_{\V\otimes_\D\T}(x\otimes t),
    \theta_\V\left(y^\sharp\otimes t'\right)\right\rangle_{\LL_\sigma}
    \\
    =\left\langle \theta_\V\left(y^\sharp\otimes t'\right),x\otimes
    t\right\rangle_{\LL_\sigma} = \mult_\pspot\left(\sigma(t')\left\langle y^\sharp,
    x\right\rangle_\D \otimes t\right).
  \end{multline*}
  Since $\LL_\sigma\subset\Skew(\sigma)$, it follows that
  $\mult_\pspot(\sigma(t_1)\otimes t_2) = -
  \mult_\pspot(\sigma(t_2)\otimes t_1)$ for all $t_1$,
  $t_2\in\T_\pspot$; hence the computation above yields
  \[
    \theta_\V^*\circ\varpi_{\V\otimes_\D\T}(x\otimes t) =
    -\theta_{\V_\pspot^\sharp} \circ\pi_\V(x\otimes t)
    \quad\text{for all $x\in\V_\pspot$, $t\in\T_\pspot$}.
  \]
  Therefore, the diagram~\eqref{eq:diagVT} commutes, and
  $(\Theta,\theta)$ is a morphism of categories with duality. The
  induced homomorphism of Witt groups $\Mor\colon W^-(\D,\sigma) \to
  W(X,\LL_\sigma)$ is an isomorphism because $\Theta$ is an
  equivalence of categories.
\end{proof}

The main theorem of this section follows.

\begin{thm}
  \label{thm:M}
  The composition of the scalar extension map $\ext_X\colon
  W^-(D,\sigma)\to W^-(\D,\sigma)$ 
  with the map $\Mor\colon W^-(\D,\sigma)\xrightarrow{\lowsim} W(X,\LL_\sigma)$ is
  an isomorphism
  \[
    M\colon W^-(D,\sigma)\overset{\lowsim}{\lra} W(X,\LL_\sigma).
  \]
\end{thm}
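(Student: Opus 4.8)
Since $\Mor$ is already known to be an isomorphism by Proposition~\ref{prop:Moritath}, the content of the theorem is entirely in showing that $\ext_X\colon W^-(D,\sigma)\to W^-(\D,\sigma)$ is an isomorphism; the map $M$ is then an isomorphism as a composition of two isomorphisms. So I would reduce immediately to proving that $\ext_X$ is bijective, and treat injectivity and surjectivity separately, as announced in the introduction.

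**Injectivity.** First I would observe that a skew-hermitian space over $(D,\sigma)$ whose scalar extension to $\D$ is metabolic (hyperbolic) becomes metabolic after passing to the generic fiber, i.e.\ after scalar extension to $(D_F,\sigma_F)$ where $F=k(X)$. Thus injectivity of $\ext_X$ would follow from injectivity of the composite $W^-(D,\sigma)\to W^-(D_F,\sigma_F)$. Now over $F$ the algebra $D_F$ is split, so by Morita theory $W^-(D_F,\sigma_F)$ is identified with the Witt group of a quadratic (or, after a twist by the symplectic involution, alternating/hermitian) form, and the statement becomes that a skew-hermitian form over $D$ that becomes hyperbolic over the function field of the Severi--Brauer variety of $D$ is already hyperbolic. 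This is precisely (a reformulation of) Karpenko's theorem \cite{Kar} on the injectivity of $W^-(D,\sigma)\to W^-(D_F,\sigma_F)$ (equivalently, the injectivity on torsion / the anisotropy-preserving statement for skew-hermitian forms over division algebras under generic splitting). I would cite \cite{Kar} for this and spell out only the Morita identification needed to match conventions.

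**Surjectivity.** For surjectivity I would invoke Pumpl\"un's description of $W(X,\LL_\sigma)$ in \cite{Pump2}: every class in $W(X,\LL_\sigma)$ is represented by a symmetric bilinear space that is "extended" in a suitable sense, which under the Morita equivalence $\Mor^{-1}$ translates into a skew-hermitian space over $\D$ that descends to $D$, i.e.\ lies in the image of $\ext_X$. Concretely, one transports Pumpl\"un's generators through the explicit isomorphism $\Mor$ of Proposition~\ref{prop:Moritath} and checks that they are of the form $(\V\otimes_\D\T,\theta_\V\circ(\varphi\otimes\Id))$ with $\V=W\otimes_k\VR_X$ for a $D$-module $W$; the compatibility of $\mult_X$ and $\can_X$ with the generic fiber (Proposition~\ref{prop:newL}) guarantees that the resulting form over $D$ indeed extends to the given class. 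Putting the two halves together, $\ext_X$ is bijective, hence so is $M=\Mor\circ\ext_X$.

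**Main obstacle.** The genuinely hard input is the injectivity step, which is not elementary: it rests on Karpenko's index-reduction / Witt-kernel computation for Severi--Brauer varieties, and the only real work on our side is to translate his statement (typically phrased for quadratic forms or for $\lambda$-hermitian forms under a fixed Morita equivalence) into a statement about $W^-(D,\sigma)\to W^-(D_F,\sigma_F)$ with our normalizations; I would be careful that the symplectic involution is used consistently so that "skew-hermitian over $(D,\sigma)$" matches the case of \cite{Kar} being quoted. Surjectivity, by contrast, is a more formal matter of chasing Pumpl\"un's generators through the explicit Morita isomorphism, and I do not expect difficulties there beyond bookkeeping.
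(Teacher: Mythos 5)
Your proposal follows essentially the same route as the paper: injectivity of $\ext_X$ via Karpenko's theorem by factoring $\ext_F$ through restriction to the generic fiber, and surjectivity of $M$ by transporting Pumpl\"un's generators of $W(X,\LL_\sigma)$ through the Morita equivalence. The only point to flag is that the ``bookkeeping'' in the surjectivity step has real content: Pumpl\"un's generators are trace forms $\tr_{\ell/k}(\N)$ rather than forms extended from $k$, and one must use Proposition~\ref{prop:newT} to identify their underlying module with $\T$ and then the equality $\End\T=D$ to see that every symmetric isomorphism $\T\to\T^*$ is $M(\qf{-d})$ for some $d\in D^\times$ with $\sigma(d)=-d$.
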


\begin{proof}
  We first show that $\ext_X$ is injective. By a theorem of Karpenko
  \cite{Kar}, the scalar extension map $\ext_F\colon W^-(D,\sigma) \to
  W^-(D_F,\sigma)$ is injective. The injectivity of $\ext_X$ then follows
  from the commutativity of the following diagram, where $\res_F$ is
  the restriction to the generic fiber:
  \[
  \xymatrix{W^-(D,\sigma) \ar[rr]^{\ext_F}\ar[dr]_{\ext_X} & &
    W^-(D_F,\sigma)\\ 
    &W^-(\D,\sigma)\rlap{.}\ar[ur]_{\res_F}&}
\]

  As $\Mor$ is bijective, to complete the proof, it suffices to show
  $M$ is onto. For this, we use 
  Pumpl\"un's results in \cite{Pump2} (keeping in mind that Pumpl\"un
  chooses as a generator for $\Pic(X)$ an invertible
    $\VR_X$-module
  isomorphic to 
  $\calHom(\LL_\sigma,\VR_X)$ instead of $\LL_\sigma$).

  According to \cite[Theorem~4.3]{Pump2}, $W(X,\LL_\sigma)$ is generated
  by the Witt classes of symmetric bilinear spaces with underlying
 $\VR_X$-module $\tr_{\ell/k}(\N)$, where $\ell$
  is a maximal separable subfield of $D$ and $\N$ is a self-dual
  invertible $\VR_{X_\ell}$-module. Since by
  Proposition~\ref{prop:newL},  $(\LL_\sigma)_\ell\simeq\VR_{X_\ell}(-2)$,
  self-dual invertible $\VR_{X_\ell}$-modules~$\N$ for the
  duality~$*$ are 
  isomorphic to $\VR_{X_\ell}(-1)$; hence $\tr_{\ell/k}(\N)\simeq\T$
  by Proposition~\ref{prop:newT}. We can compare every isomorphism
  $\varphi\colon \T\to \T^*$ to the canonical isomorphism
  ${\theta_\D\circ \Theta(\sigma)}\colon\T\to\T^*$, viewing
  $\sigma$ as an 
  isomorphism $\D\to\D^\sharp$. Since $\End\T=D$ by
  Proposition~\ref{prop:newT}, for every $\varphi$, there exists a 
  $d\in D^\times$ such that the following diagram commutes:
  \[
    \xymatrix{\T\ar[drr]^{\varphi} \ar[d]_{d\cdot} & &\\
      \T\ar[rr]_{\theta_\D\circ\Theta(\sigma)}&&\T^*\rlap{.}}
  \]
  On each stalk $\T_\pspot$, the canonical isomorphism
    $\theta_\D\circ\Theta(\sigma)$ maps
    $t\in\T_\pspot$ to the linear map 
    $\T_\pspot\to (\LL_\sigma)_\pspot$ that carries $t'$ to
    $\sigma(t)t'$; hence $\varphi(t)$ maps
    $t'$ to 
    $\sigma(dt)t'$. The element $d$ satisfies
    $\sigma(d)=-d$ since 
    $\varphi$ is skew-hermitian; hence the Witt class of
    $(\T,\varphi)$ is the image under $M$ of the Witt class of the
    skew-hermitian form $\qf{-d}$ over $(D,\sigma)$. Therefore, the
    map $M$ is onto.
\end{proof}

\section{An octagon of Witt groups}
\label{sec:octa}

Henceforth, we assume $D$ is a quaternion division algebra;  hence
$\sigma$ is the canonical conjugation involution~$\invo$. We write
simply $W^+(D)$ (resp.\ $W^-(D)$) for the Witt group of hermitian
(resp.\ skew-hermitian) forms over~$D$.

Let $i$, $j\in D$ be nonzero anticommuting quaternions, and let
$K=k(i)\subset D$. We have $D=K\oplus jK$; 
hence for every $\varepsilon$-hermitian form $h\colon V\times V\to D$
(with $\varepsilon=\pm 1$) on a right $D$-vector space $V$,  we may
define an  
$\varepsilon$-hermitian form $f\colon V\times V\to K$ (for the
nontrivial automorphism $\invo$ on $K$) and a
$(-\varepsilon)$-symmetric bilinear form $g\colon V\times V\to K$ by
the equation 
\[
h(v,v')=f(v,v')+j g(v,v')\quad\text{for $v$, $v'\in V$.}
\]
We thus obtain Witt group homomorphisms
\[
\pi_1\colon W^\varepsilon(D)\lra W^\varepsilon(K,\invo),\quad h\longmapsto f
\quad\text{and}\quad 
\pi_2\colon W^\varepsilon(D)\lra W^{-\varepsilon}(K),\quad h\longmapsto g; 
\]
see \cite[Lemma~10.3.1]{Sch}.\footnote{There are several typos on
  p.~359 of \cite{Sch}.} 
Computation yields 
an explicit description of $\pi_2\colon
W^-(D) \to W(K)$ ($=W^+(K)$): For 
$h=\qf{q}$, with 
$q=iq_0+jq_1$, $q_0\in k$,  and
$q_1\in K$, we have
\begin{equation}
  \label{eq:defpi2}
  \pi_2(\qf q)=
  \begin{cases}
    \qf{q_1} \qf{1,-q^2} & \text{if $q_1\neq0$,}\\
    0 & \text{if $q_1=0$}.
  \end{cases}
\end{equation}
Indeed, for all $\lambda\in K$ and $\mu\in K$, we have $g(\lambda+j\mu)=q_1\lambda^2-2iq_0\lambda\mu-b\bar q_1\mu^2$. Hence this quadratic form represents $q_1$ and has discriminant $q^2$.

We may also define maps in the opposite direction
using scaled base change. More precisely, for every
$\varepsilon$-hermitian form 
$f\colon U\times U\to K$ on a $K$-vector space $U$,  
there is a unique $(-\varepsilon)$-hermitian form
\[
h\colon (U\otimes_KD)\times(U\otimes_KD)\lra D
\]
such that $h(u,u')=f(u,u')i$ for $u$, $u'\in U$. 
Similarly, for every $\varepsilon$-symmetric bilinear form $g\colon
U\times U\to K$ on a $K$-vector space $U$,  there is a unique
$(-\varepsilon)$-hermitian form
\[
h'\colon(U\otimes_KD)\times(U\otimes_KD)\lra D
\]
such that $h'(u,u')=ijg(u,u')$ for $u$, $u'\in U$. 
Thus, for $\varepsilon=\pm1$, we obtain Witt group homomorphisms 
\[
\sigma_1\colon W^\varepsilon(K,\invo)\lra W^{-\varepsilon}(D),\quad f\longmapsto h \quad\text{and}\quad
\sigma_2\colon W^\varepsilon(K)\lra W^{-\varepsilon}(D),\quad g\longmapsto h'.
\]
(Of course, $W^-(K)=0$.)

\begin{thm}
  \label{thm:octa}
  The following octagon is exact:
  \[
  \xymatrix{
  W^+(D)\ar[r]^{\pi_2} & W^{-}(K) \ar[r]^{\sigma_2} &
  W^+(D)\ar[d]^{\pi_1}\\
  W^-(K,\invo)\ar[u]^{\sigma_1}& &W^+(K,\invo)\ar[d]^{\sigma_1}\\
  W^-(D)\ar[u]^{\pi_1}&W^+(K) \ar[l]_{\sigma_2}& W^-(D)\rlap{.}\ar[l]_{\pi_2}
  }
  \]
\end{thm}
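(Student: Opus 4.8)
The plan is to verify exactness at each of the eight vertices by hand, exploiting the symmetry of the octagon under the involution $(\varepsilon\mapsto-\varepsilon)$ together with the standard structure theory of hermitian forms over the quaternion algebra $D=K\oplus jK$. The key algebraic input is the fibre-wise decomposition $h=f+jg$ together with the explicit formulas for $\pi_2$ in~\eqref{eq:defpi2} and for the scaled-base-change maps $\sigma_1,\sigma_2$. I would first record the two "round-trip" composites $\pi_i\circ\sigma_i$ and $\sigma_i\circ\pi_i$: a direct computation on diagonal forms shows that for a one-dimensional $\varepsilon$-hermitian form $\qf{a}$ over $(K,\invo)$, the form $\sigma_1\qf{a}$ over $D$ is $\qf{ai}$, and applying $\pi_1$ (resp.\ $\pi_2$) recovers $f$ (resp.\ $g$) after scaling; the upshot is that each composition of two consecutive maps around the octagon is zero, which handles all eight "$\mathrm{im}\subseteq\ker$" inclusions simultaneously once one checks one representative composite of each of the four types $\pi_1\sigma_2$, $\sigma_2\pi_2$, $\pi_2\sigma_1$, $\sigma_1\pi_1$ (the others follow by the $\varepsilon\leftrightarrow-\varepsilon$ symmetry).

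For the reverse inclusions I would argue vertex by vertex. Exactness at $W^+(K,\invo)$ and at $W^-(K,\invo)$ (the two vertices receiving a $\pi_1$ and emitting a $\sigma_1$) is the cleanest: if $f$ is an $\varepsilon$-hermitian form over $(K,\invo)$ with $\sigma_1(f)$ hyperbolic over $D$, one restricts scalars from $D$ to $K$ and uses the transfer/scaling relation between $h\mapsto f$ and $f\mapsto h$ to see that $f$ lies in the image of $\pi_1\colon W^{-\varepsilon}(D)\to W^\varepsilon(K,\invo)$; here Morita theory over $K$ (the fact that $D\otimes_K$ is an equivalence on the relevant categories, so hyperbolicity descends) does the work. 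Exactness at the two $W^\pm(D)$ vertices in the top and bottom rows, i.e.\ $\ker\pi_1\subseteq\mathrm{im}\,\sigma_2$ and the symmetric statement, is where the real content sits: given an $\varepsilon$-hermitian form $h=f+jg$ over $D$ with $f$ Witt-trivial over $(K,\invo)$, one must show $h$ is Witt-equivalent to a form in the image of $\sigma_2$, i.e.\ essentially to $ij$ times the bilinear form $g$; this is done by diagonalising, using $f\sim 0$ to cancel the "$K$-part" after a chain of isometries, and tracking what survives in the "$jK$-part". Finally, exactness at the two $W^{\mp}(K)$ vertices ($\ker\sigma_2\subseteq\mathrm{im}\,\pi_2$) follows from formula~\eqref{eq:defpi2}: a bilinear form $g$ over $K$ with $\sigma_2(g)$ hyperbolic over $D$ must, after diagonalisation, have each $\qf{b}$ with $\qf{b}\qf{1,-q^2}$-type shape realisable as $\pi_2$ of an appropriate rank-one skew-hermitian form, and one assembles these.

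The main obstacle I expect is the reverse inclusion at the $W^\pm(D)$ vertices, $\ker\pi_1\subseteq\mathrm{im}\,\sigma_2$: controlling the "$jK$-component" $g$ of $h$ under the isometries that kill $f$ requires care, since those isometries mix $f$ and $g$, and one cannot simply diagonalise $f$ and $g$ simultaneously. The cleanest route is probably to invoke that this octagon is Lewis's \cite{Lewis}, so one may either cite it directly or reprove exactness at these vertices by reducing, via the Morita equivalence between skew-hermitian forms over $D$ and quadratic forms, to a known exact triangle in the Witt theory of the function field or of $K$; concretely, one can use that $W^-(D)=W^2(D,\sigma)$ and feed into a residue-map computation, but for a self-contained argument the diagonalisation bookkeeping, while routine, is the delicate step. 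All remaining vertices reduce to Morita invariance over $K$ and to~\eqref{eq:defpi2}, which are straightforward.
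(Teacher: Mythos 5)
The route you ultimately recommend---citing Lewis---is in fact exactly what the paper does: its entire proof consists of invoking \cite[Theorem~10.3.2]{Sch} for the exactness of the five-term half of the octagon running from $W^-(K)$ to $W^+(K)$, and observing that the same arguments (alternatively \cite[Proposition~2]{Lewis} or \cite[Section~6]{GBM}) give the other half. So read as ``this is Lewis's octagon, quote it,'' your proposal is correct and coincides with the paper.

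Read instead as a self-contained verification, the sketch has genuine gaps, not just unfinished bookkeeping. First, exactness at the vertices $W^{\pm}(K,\invo)$ does not follow from ``Morita theory over $K$'': $K\subset D$ is not a Morita context ($D$ is a noncommutative $k$-central division algebra, so it is not isomorphic to $\End_K(P)$ for any $K$-space $P$; the functor $U\mapsto U\otimes_KD$ is faithful but not full, hence not an equivalence), and the inclusion $\ker\sigma_1\subseteq\image\pi_1$ is not a descent-of-hyperbolicity statement at all---the conclusion is that $f$ lies in the image of $\pi_1$, and this is precisely the nontrivial content of the cited proofs, established there by a form-theoretic induction on anisotropic representatives. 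Second, at $W^+(K)$ the entrywise argument cannot work as stated: by~\eqref{eq:defpi2} the rank-one image of $\pi_2$ consists only of classes of the special shape $\qf{q_1}\qf{1,-q^2}$, and an arbitrary $g\in\ker\sigma_2$ admits no reason to diagonalise into such pieces; the hypothesis $\sigma_2(g)\sim0$ must be fed into an induction, again as in the quoted sources. Third, the inclusion $\ker\pi_1\subseteq\image\sigma_2$ at the $W^{\pm}(D)$ vertices, which you rightly identify as the core, is only described, not proved. Two smaller points: $\pi_1$ preserves the sign, $W^\varepsilon(D)\to W^\varepsilon(K,\invo)$, rather than reversing it; and since $W^-(K)=0$, exactness at the vertices adjacent to it degenerates to the injectivity of $\pi_1$ on $W^+(D)$ and the surjectivity of $\sigma_1\colon W^-(K,\invo)\to W^+(D)$, which a direct verification should treat as such rather than uniformly with the rest.
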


\begin{proof}
  The exactness of the five-term sequence from $W^-(K)$ to $W^+(K)$ is
  proved in \cite[Theorem~10.3.2]{Sch}. The same arguments can be used to
  prove the exactness of the other half; see also \cite[Proposition~2]{Lewis} or
  \cite[Section~6]{GBM}. 
\end{proof}

From here on, we omit the superscripts $+$. To define the first map
in~\eqref{eq:intro2} and the last map in~\eqref{eq:intro3},
note that every hermitian form on $D$ has a diagonalization with
coefficients in $k$;  hence scalar extension yields a surjective group
homomorphism 
\[
\ext_D\colon W(k)\lra W(D).
\]
On the other hand, for every hermitian form $h$ on $D$, the map
$q_h\colon v\mapsto h(v,v)$ is a quadratic form on $k$, and mapping
$h$ to $q_h$ yields a group homomorphism
\[
s_D\colon W(D)\lra W(k).
\]
The following result is proved in~\cite[Theorem~10.1.7]{Sch}. 

\begin{lem}
  \label{lem:quat}
  The following diagram, where $n_D$ denotes multiplication by the
  norm form of $D$, commutes and has exact row and column:
  \[
  \xymatrix{
  &W(k)\ar[d]_{\ext_D}\ar[dr]^{n_D}& \\
  0\ar[r]& W(D)\ar[r]^{s_D}\ar[d] & W(k)\\
  &0\rlap{.}&
  }
  \]
\end{lem}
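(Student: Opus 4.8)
The plan is to check the three assertions in turn. Exactness of the column at $W(D)$ --- surjectivity of $\ext_D$ --- is exactly the remark recorded just before the statement: every hermitian form over $(D,\invo)$ has a diagonalization $\qf{a_1,\dots,a_n}_D$ with $a_i\in k^\times$. For commutativity of the triangle I would diagonalize: given $q=\qf{a_1,\dots,a_n}$ over $k$, one has $\ext_D(q)=\qf{a_1,\dots,a_n}_D$, so, since each $a_i$ is central, the associated quadratic form on the underlying $k$-space $D^n$ is
\[
  q_{\ext_D(q)}(v_1,\dots,v_n)=\sum_{i=1}^n\overline{v_i}\,a_i\,v_i=\sum_{i=1}^n a_i\,\overline{v_i}v_i,
\]
which, as a form over $k$, is $\qf{a_1,\dots,a_n}\otimes N_D=n_D(q)$, where $N_D$ is the $4$-dimensional norm form $x\mapsto\overline{x}x$ of $D$. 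Hence $s_D\circ\ext_D=n_D$.

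The substance of the lemma is exactness of the row, i.e. injectivity of $s_D$. The key elementary point is that a hermitian space $(V,h)$ over $(D,\invo)$ and the quadratic space $(V,q_h)$ over $k$ have the very same underlying set of vectors, with $q_h(v)=h(v,v)$; hence a nonzero $v$ is isotropic for $q_h$ if and only if it is isotropic for $h$. Since $q_h$ is moreover nondegenerate --- its polar form sends $(v,w)$ to $\tfrac12\Trd_D\bigl(h(v,w)\bigr)$, and for $v\neq0$ the $D$-linear map $h(v,-)\colon V\to D$ has nonzero, hence (as $D$ is a division ring) total, image --- it follows that $h$ is anisotropic if and only if $q_h$ is anisotropic. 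Furthermore $q_h$ is hyperbolic whenever $h$ is: a Lagrangian $D$-submodule $N\subset V$ (of $D$-dimension half that of $V$) is, after forgetting the $D$-action, a totally isotropic $k$-subspace for $q_h$ of $k$-dimension half that of $V$.

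Granting these remarks, suppose $s_D(h)=0$ in $W(k)$ and write $h\simeq h_{\mathrm{an}}\perp h_{\mathrm{hyp}}$ for the Witt decomposition over $D$. Then in $W(k)$ we have $q_{h_{\mathrm{an}}}=q_h-q_{h_{\mathrm{hyp}}}=0$, so the anisotropic form $q_{h_{\mathrm{an}}}$ must be the zero form; thus $h_{\mathrm{an}}$ has rank $0$, $h$ is hyperbolic, and $s_D$ is injective. I do not expect a serious obstacle here: the only point that demands a little care is the nondegeneracy of $q_h$ --- equivalently, that $h\mapsto q_h$ is a well-defined homomorphism $W(D)\to W(k)$ at all --- which comes down to the surjectivity of $h(v,-)$ over the division algebra $D$. (This recovers \cite[Theorem~10.1.7]{Sch}.)
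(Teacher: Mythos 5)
Your proposal is correct. The paper gives no argument of its own here---it simply cites \cite[Theorem~10.1.7]{Sch}---and your proof (anisotropy of $h$ iff anisotropy of $q_h$, nondegeneracy of $q_h$ via surjectivity of $h(v,-)$ over the division ring, hyperbolicity passing from $h$ to $q_h$, then Witt decomposition) is precisely the standard argument behind that reference, so there is nothing to add.
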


\section{Residues and transfers}
\label{sec:restrans}

Recall that $D$ is now assumed to be a quaternion division algebra; 
hence its Severi--Brauer variety $X$ is a smooth projective conic.
The maps in the exact sequences \eqref{eq:intro2} and
\eqref{eq:intro3} depend on the choice of uniformizers $\pi_\pspot$
and linear functionals $s_\pspot$ at each closed point $\pspot\in
X^{(1)}$. In order to make suitable choices, we first introduce
coordinates, which will allow us to write an equation for $X$ and to
identify the invertible $\VR_X$-module $\LL_\sigma$ with the
ideal sheaf of a point 
$\infty\in X^{(1)}$. Since there is a unique symplectic involution
$\sigma$ on~$D$, namely, the conjugation involution $\invo$, we
simplify the notation by writing $\LL$ for $\LL_\sigma$.

\subsection{Coordinatization}
\label{subsec:coord}

Let $D^0$ be the $3$-dimensional
$k$-vector space of pure quaternions in $D$. We have $d^2\in k$ for
every $d\in D^0$; hence the map $q\colon D^0\to k$ defined by
$q(d)=d^2$ is a quadratic form. Since every $2$-dimensional
left ideal of~$D$ over a splitting field intersects $D^0$ in a line
(see Lemma~\ref{lem:tauto}), we may identify $X$ with the conic in the
projective plane $\mathbb{P}(D^0)$ given by the equation $q=0$.
Under this identification, every line in $D^0$ corresponds to
  the left ideal of $D$ that it generates.

Let $i$, $j\in D$ be two nonzero anticommuting pure quaternions, and
let 
$i^2=a$, $j^2=b$, so that
\[
D=(a,b)_k.
\]
The elements $aj$, $ij$, $bi$ form a $k$-base of $D^0$. If $\xi$,
$\eta$, $\zeta$ denotes the dual base, the conic $X$ is given by the
equation 
\[
(aj\xi+ij\eta+bi\zeta)^2=0,\quad\textit{i.e.,}\quad
a\xi^2-\eta^2+b\zeta^2=0.
\]
Let $\infty\in X^{(1)}$ be the closed point given by the equation
$\zeta=0$; the residue field $\resatinf$ at $\infty$ is canonically
isomorphic to $k(i)$ by a map that carries the value $\frac\eta\xi(\infty)$ of the function $\frac \eta\xi$ at $\infty$ to
$i$; see \cite[Proposition~45.12]{EKM}. Let
also $X_\af\subset X$ 
be the open subscheme defined by
  $\zeta\neq0$,
which is an affine conic, and let 
$\VR_\af$ be the affine ring of $X_\af$; then, writing
$x=\frac\xi\zeta$ 
and $y=\frac\eta\zeta$, we have
\[
\VR_\af=k[x, y] \subset k(x,y) = F \quad\text{with $y^2=ax^2+b$.}
\]
The point with coordinates $(x,y)$ on the affine conic
  $X_\af(F)$ is $ajx+ijy+bi$; under the identification of the
  conic with the Severi--Brauer variety of $D$, it is the $F$-rational
  point obtained by base change from the generic point $T$ of
  $X$. 
  Therefore, the generic fiber of the sheaf $\T$ defined
  in~\eqref{eq:defT} is  
\[
T=D_Fe,\quad\text{where }
e=bi+axj+yij\in D_F.
\]
Clearly, $e\in T\cap\overline T$ since $\overline e = -e$;  hence the
generic fiber $T\cap\overline T$ of $\LL$ is $L=eF$. We
next describe the module of affine sections and the stalk at $\infty$
of $\T$ and $\LL$, for which we use the notation $\T_\af$,
$\T_\infty$, $\LL_\af$, $\LL_\infty$. We write
$\D_\af=D\otimes\VR_\af$ for the module of sections of $\D$ 
over $X_\af$ and $\D_\infty=D\otimes\VR_\infty$ for the stalk of $\D$
at $\infty$.

\begin{prop}
  \label{prop:afsec}
  We have $T=eF+jeF=jeF+ijeF$ and
  \[
  \T_\af=\D_\af\, e,\quad\T_\infty=\D_\infty ex^{-1},\quad
  \LL_\af=e\VR_\af,\quad\LL_\infty=ex^{-1}\VR_\infty.
  \]
\end{prop}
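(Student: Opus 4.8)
The plan is to work entirely with explicit computations in the split algebra $D_F = D\otimes_k F$, starting from the formula $e = bi + axj + yij$ and the relation $y^2 = ax^2 + b$. First I would establish the algebraic identities among $e$, $je$, $ije$: since $i$, $j$ anticommute with $i^2=a$, $j^2=b$, one computes $je = -bij + axbj - yai$ (using $ji = -ij$, $ij\cdot j = bi$ up to sign, etc.), and similarly $ije$. The claim $T = eF + jeF = jeF + ijeF$ then follows by checking that $\{e, je\}$ and $\{je, ije\}$ are each $F$-linearly independent, hence each spans the $2$-dimensional left ideal $T = D_F e$; this is a routine linear-algebra verification over $F$, using that $D_F$ is split so $T$ has $F$-dimension $2$ as a left ideal — wait, $T$ is the $n$-dimensional left ideal, so here $n=2$ and $\dim_F T = 2$, consistent. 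The point of giving both generating pairs is that one pair has coefficients that are units in $\VR_\af$ and the other has coefficients that are units in $\VR_\infty$, which is exactly what is needed for the two stalk computations.

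Next, for $\T_\af = \D_\af e$: by definition $\T = T\cap\D$, so $\T_\af = T\cap\D_\af = D_F e\cap(D\otimes_k\VR_\af)$. One inclusion, $\D_\af e\subseteq T\cap\D_\af$, is clear since $e\in\D_\af$. For the reverse inclusion, I would take an arbitrary element $de$ with $d\in D_F$ lying in $\D_\af$, expand $d$ in the $K$-basis $\{1,j\}$ of $D$ (with $K = k(i)$), and use the relation $T = eF + jeF$ together with the fact that $e$ and $je$ have coefficient matrices over $\VR_\af$ whose determinant is a unit in $\VR_\af$ — this last point is where $y$ being invertible or not matters, so I would instead argue via the description $T = eF+jeF$ directly: any $\VR_\af$-point of $\D$ lying in $T$ is an $\VR_\af$-combination once we check the transition is integral over $\VR_\af$. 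The cleanest route is to observe that $\{e, je, ije, \text{(a fourth element)}\}$... actually, the more economical argument: $\T$ is locally free of rank $n=2$ by Proposition \ref{prop:newT}, and $\D_\af e$ is a locally free $\VR_\af$-submodule of $\T_\af$ of the same rank $2$ (free on $e$, $je$) contained in it with the same generic fiber $T$; since $X_\af$ is a smooth affine curve and both are reflexive (being locally free), equality of the generic fibers forces equality — this avoids the brute-force computation. The stalk statement $\T_\infty = \D_\infty e x^{-1}$ then follows by the analogous argument using the pair $\{je, ije\}$, whose coefficients relative to the $K$-basis involve $x$ in the denominator, so the integral generator at $\infty$ (where $x^{-1}\in\VR_\infty$ fails — rather, $x$ has a pole at... ) must be rescaled; here one uses that $x$ has a zero of order... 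Let me just say: one checks $\D_\infty e x^{-1}$ is locally free of rank $2$ over $\VR_\infty$ with generic fiber $T$ and is contained in $\T_\infty$, again forcing equality.

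Finally, for $\LL_\af = e\VR_\af$ and $\LL_\infty = ex^{-1}\VR_\infty$: by definition $\LL = L\cap\D = T\cap\overline T\cap\D$, with $L = eF$. Since $e\in\LL_\af$ and $\LL$ is invertible by Proposition \ref{prop:newL}, $\LL_\af = e\VR_\af$ will follow once I show $e$ generates the stalk $\LL_\pspot$ at every point $\pspot\in X_\af$; equivalently, that $e$ is not divisible in $\LL_\pspot$ by a nonunit of $\VR_\pspot$. This can be read off from $\T_\af = \D_\af e$: since $\LL_\af = \T_\af\cap\Skew(\invo)$ and the reduced-trace pairing argument in the proof of Proposition \ref{prop:newL} shows $\LL_\af$ is a direct summand locally, the generator of $\T_\af$ projects to a generator of $\LL_\af$. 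The statement at $\infty$ is parallel, using $\T_\infty = \D_\infty ex^{-1}$. The main obstacle I anticipate is the bookkeeping in switching between the two generating pairs of $T$ and correctly tracking which scalar ($1$ versus $x^{-1}$) gives the integral generator at the affine points versus at $\infty$ — i.e., verifying the valuations of the coefficients of $e$, $je$, $ije$ at $\infty$; once the correct normalization is pinned down, the equalities follow from the reflexivity/rank argument without further computation.
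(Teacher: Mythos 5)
Your opening step ($T=eF+jeF=jeF+ijeF$ by linear independence and dimension count) is fine, but the core of your argument for $\T_\af=\D_\af e$ does not work. The claim that ``equality of the generic fibers forces equality'' for two locally free $\VR_\af$-modules of the same rank, one contained in the other, is false: $\mathfrak{m}_\pspot\subset\VR_\af$ is the standard counterexample (both invertible, same generic fiber $F$, not equal). The quotient $\T_\af/\D_\af e$ is torsion, but nothing in your argument shows it vanishes; showing that $\D_\af e$ is \emph{saturated} in $\D_\af$ is precisely the content of the proposition, so the reflexivity argument begs the question. A second concrete error: $\D_\af e$ is \emph{not} free on $\{e,je\}$. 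From $e^2=0$ one gets $bie+axje+yije=0$ and $be+xije+yje=0$, so $e\,\frac yx+je\,\frac1x=-b^{-1}(ax\,je+y\,ije)$ lies in $\D_\af e$ (hence in $\T_\af$) although $\frac1x\notin\VR_\af$; thus $\VR_\af e+\VR_\af je\subsetneq\D_\af e$, and your ``coefficient matrix of $e,je$ is unimodular over $\VR_\af$'' fallback also fails at the zeros of $x$. You have the roles of the two generating pairs reversed: the pair adapted to the affine part is $\{je,ije\}$ (whose coefficients on $j$ and $ij$ in the basis $1,i,j,ij$ are the constants $-ab$ and $-b$, so $je\lambda+ije\mu\in\D_\af$ forces $\lambda,\mu\in\VR_\af$, and then $je\lambda+ije\mu=(j\lambda+ij\mu)e\in\D_\af e$), while $\{e,je\}$ is the pair adapted to the stalk at $\infty$. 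That elementary coefficient inspection is the whole proof in the paper, and it cannot be bypassed by a rank argument.

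The $\LL$ statements have the same defect in your write-up: the assertion that ``the generator of $\T_\af$ projects to a generator of $\LL_\af$'' is unjustified ($\LL_\af$ is a submodule, not a quotient, and $e$ generates $\T_\af$ over $\D_\af$, not over $\VR_\af$). The correct argument is again a one-line coefficient check: if $e\lambda\in\D_\af$ with $\lambda\in F$, the coefficient of $i$ in $e\lambda=(bi+axj+yij)\lambda$ is $b\lambda$, so $\lambda\in\VR_\af$, giving $\LL_\af=e\VR_\af$; similarly at $\infty$ the coefficient of $j$ gives $x\lambda\in\VR_\infty$, whence $\LL_\infty=ex^{-1}\VR_\infty$.
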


\begin{proof}
  Because $e^2=0$, we have
  \begin{equation}
  \label{eq:esq0}
  bie+axje+yije=0.
  \end{equation}
  Multiplying on the left by $i^{-1}$, we obtain
  \[
  be+xije+yje=0. 
  \]
  Since $T=D_F e=eF+ieF+jeF+ijeF$, the first assertion follows from
  these equations. 

  By definition,
  \[
  \T_\af= T\cap \D_\af =(D_Fe)\cap\D_\af\quad\text{and}\quad
  \LL_\af=L\cap 
  \D_\af=(eF)\cap\D_\af.
  \]
  Since $e\in\D_\af$, the inclusions $\D_\af e\subset\T_\af$ and
  $e\VR_\af\subset \LL_\af$ are clear. If $\lambda\in F$ is such that
  $e\lambda\in\D_\af$, then by looking at the coefficient of $i$ in
  $e\lambda$,  we see that $\lambda\in\VR_\af$. Therefore,
  $\LL_\af=e\VR_\af$. Now, the first part of the proof shows that
  every element in $T$ can be written as $je\lambda +ije\mu$ for some
  $\lambda$, $\mu\in F$. If $\lambda$, $\mu$ are such
  that $je\lambda+ije\mu\in\D_\af$, then inspection of the
  coefficients of $j$ and $ij$ shows that $\lambda$,
  $\mu\in\VR_\af$. Therefore, $je\lambda+ije\mu\in \D_\af e$, and it
  follows that $\T_\af=\D_\af e$.

  Next, we consider the stalks at $\infty$. By definition,
  \[
  \T_\infty=T\cap\D_\infty=(D_F e)\cap\D_\infty\quad\text{and}\quad
  \LL_\infty=L\cap 
  \D_\infty = (eF)\cap\D_\infty.
  \]
  Since $ex^{-1}\in\D_\infty$, we have $ex^{-1}\in\T_\infty$ and
  $ex^{-1}\in\LL_\infty$, so the inclusions
  $\D_\infty ex^{-1}\subset\T_\infty$ and 
  $ex^{-1}\VR_\infty \subset\LL_\infty$ are clear. If $\lambda\in F$ is
  such that 
  $e\lambda\in \D_\infty$, then the coefficient of $j$
  shows that $x\lambda\in\VR_\infty$, hence
  $e\lambda\in\, ex^{-1}\VR_\infty$. To complete the description of 
  $\T_\infty$, we use the equation $T=eF+jeF$ proven above. If 
  $e\lambda +je\mu\in\D_\infty$
for some $\lambda$, $\mu\in F$, then by
  looking at the coefficients of $1$ and $j$, we see that $x\lambda$,
  $x\mu\in\VR_\infty$. Therefore, $e\lambda +je\mu\in \D_\infty
  ex^{-1}$. So we get $\LL_\infty=ex^{-1}\VR_\infty$ and
  $\T_\infty=\D_\infty ex^{-1}$.  
\end{proof}

From the descriptions of $\LL_\af$ and $\LL_\infty$ above, it follows
that mapping $e\in L$ to $1\in F$ defines an isomorphism of
invertible $\VR_X$-modules
\begin{equation}
  \label{eq:epsilon}
  \varepsilon\colon\LL\overset{\lowsim}{\lra} \II(\infty),
\end{equation}
where $\II(\infty)$ is the ideal sheaf of $\infty$, \textit{i.e.}, the subsheaf
of $\VR_{X}$ whose module of affine sections is $\VR_\af$ and whose
stalk at $\infty$ is the maximal ideal
$\mathfrak{m}_\infty=x^{-1}\VR_\infty$ of $\VR_\infty$. 

\subsection{Coherent choices}
\label{subseq:coherent}

For each point $\pspot\in X^{(1)}$, let $\pi_\pspot$ be a uniformizer
of the local ring $\VR_\pspot$. Two residue maps
\[
  \partial_\pspot^1,\;\partial_\pspot^2\colon W(F)\lra W(\resatp)
\]
are defined as follows: Select
in every symmetric bilinear space $(W,b)$
over $F$ an $\VR_\pspot$-lattice of the form
$W_1\perp W_2$, such that the restrictions $b_1$ of $b$ to
$W_1$ and $b_2$ of $\qf{\pi_\pspot^{-1}}b$ to $W_2$ are nonsingular (as $\VR_\pspot$-bilinear
forms). Then $\partial_\pspot^i$ maps $(W,b)$ to the Witt class of
$(W_i\otimes_{\VR_\pspot}\resatp,(b_i)_{\resatp})$. Thus,
$\partial_\pspot^1$ does not depend on the choice of $\pi_\pspot$, but
$\partial_\pspot^2$ does. If the bilinear space $(W,b)$ is the generic
fiber of a symmetric bilinear space $(\W,b)$ over $X$ with
values in 
$\VR_{X}$, then by definition\footnote{We abuse notation by not
  distinguishing between a space and its Witt class.}
\[
\partial_\pspot^1(W,b) = \left(\W_\pspot\otimes_{\VR_\pspot}\resatp,
b_{\resatp}\right) \quad\text{and}\quad
\partial_\pspot^2(W,b)=0 \quad\text{for all $\pspot\in X^{(1)}$.}
\]
In contrast, if $(W,b)$ is the generic fiber of a symmetric bilinear
space $(\W,b)$ with values in $\II(\infty)$, then
\begin{align*}
   \partial_\pspot^1(W,b) = &
  \begin{cases}
    (\W_\pspot\otimes_{\VR_\pspot}\resatp,b_{\resatp}) &\text{if
      $\pspot\neq\infty$},\\ 
    0&\text{if $\pspot=\infty$},
  \end{cases}
  \\
  \intertext{and there exists an $\alpha\in \resatinf^\times$ depending on
  the choice of $\pi_\infty$ such that}
  \partial_\pspot^2(W,b) = &
  \begin{cases}
    0&\text{if $\pspot\neq\infty$},\\
    (\W_\infty\otimes_{\VR_\infty}\resatinf,\qf\alpha b_{\resatinf})
    &\text{if $\pspot=\infty$}.
  \end{cases}
\end{align*}
It follows that the maps
\[
\delta=\oplus_{\pspot}\partial_\pspot^2\colon W(F) \lra
\bigoplus_{\pspot\in X^{(1)}} W(\resatp)
\quad\text{and}\quad
\delta'=\left(\oplus_{\pspot\neq\infty}\partial_\pspot^2\right)
\oplus \partial^1_\infty \colon W(F) \lra
\bigoplus_{\pspot\in X^{(1)}} W(\resatp)
\]
vanish on the images of $W(X)$ and $W(X,\II(\infty))$, 
respectively. These maps fit in the exact
sequences~\eqref{eq:intro1}. 

To extend these exact sequences further, we use transfer maps
$W(\resatp)\to W(k)$.  
Since $\delta$ and $\delta'$ depend on the choice of $\pi_\pspot$, we need to make a coherent choice for these transfers. With this in mind,  
we consider the Weil differential
$\omega=\frac{dx}{2y}$, which is uniquely determined up to a factor in
$k^\times$ by the condition that its divisor is $-\infty$ (see \cite[Section~II.5]{Chev}). Thus, for $\pspot\in X^{(1)}$,  the $\pspot$-component
$\omega_\pspot$ is 
a linear map $F\to k$ that vanishes on $\VR_\pspot$ if
$\pspot\neq\infty$ and on 
$\mathfrak{m}_\infty$ if $\pspot=\infty$. Abusing notation, we again write
$\omega_\pspot$ for the following $k$-linear maps induced by the local
components of $\omega$:
\[
\omega_{\pspot}\colon \mathfrak{m}_{\pspot}^{-1}/\VR_{\pspot}\lra k
\quad\text{for $\pspot\neq\infty$},\quad \text{and}\quad
\omega_\infty\colon\VR_{\infty}/\mathfrak{m}_{\infty}=\resatinf\lra k.
\]
For $\omega=\frac{dx}{2y}$, the computation in \cite[Section~VI.3]{Chev} shows that $\omega_\infty$ is defined by
\begin{equation}
\label{eq:omegadef}
\omega_\infty(1)=0 \quad\text{and}\quad
\omega_\infty\left(\frac{y}x(\infty)\right)=-1,
\end{equation}
where $\frac{y}x(\infty)$ is the image of $x^{-1}y\in\VR_\infty$ in
$\resatinf$. In the following description of the maps
$\omega_{\pspot}$ for $\pspot\neq\infty$, we write $v_\qspot$ for the
(normalized) $\qspot$-adic valuation on $F$, for every $\qspot\in X^{(1)}$.

\begin{prop}
  \label{prop:diffdesc}
  For $\pspot\neq\infty$, every element in
  $\mathfrak{m}_\pspot^{-1}/\VR_\pspot$ can be represented in the form
  $f+\VR_\pspot$ for some $f\in\mathfrak{m}_\pspot^{-1}$ such that
  $v_\qspot(f)\geq0$ for all $\qspot\neq\pspot$. For such $f$,
  \[
  \omega_\pspot(f)=-\omega_\infty\left(f(\infty)\right).
  \]
\end{prop}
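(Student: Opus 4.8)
The plan is to derive the identity from the residue (sum) formula $\sum_{\qspot\in X^{(1)}}\omega_\qspot(g)=0$, which holds for every $g\in F$ since a Weil differential annihilates the principal adeles. The only point where global information on $X$ really intervenes is the existence of the claimed representative $f$, which I establish first.

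Fix $\pspot\in X^{(1)}$ with $\pspot\neq\infty$, and write $d=\deg\pspot=[\resatp:k]$. The set of $f\in\mathfrak m_\pspot^{-1}$ with $v_\qspot(f)\geq0$ for all $\qspot\neq\pspot$ is precisely the Riemann--Roch space $L(\pspot)=\{f\in F\mid\operatorname{div}(f)+\pspot\geq0\}$. Since $\operatorname{div}(\omega)=-\infty$, the divisor $-\infty$ is a canonical divisor of the genus-$0$ curve $X$; as $\deg(-\infty-\pspot)<0$, the space $L(-\infty-\pspot)$ vanishes and Riemann--Roch yields $\dim_kL(\pspot)=\deg\pspot+1=d+1$. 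The $k$-linear reduction map $\rho\colon L(\pspot)\to\mathfrak m_\pspot^{-1}/\VR_\pspot$, $f\mapsto f+\VR_\pspot$, has kernel $L(\pspot)\cap\VR_\pspot=L(0)$, which consists of the rational functions on $X$ without poles, hence equals $k$ ($k$ being algebraically closed in $F$ because $X$ is geometrically integral). As $\dim_k\mathfrak m_\pspot^{-1}/\VR_\pspot=\dim_k\resatp=d$, the map $\rho$ is surjective, which proves the first assertion. Note that such an $f$ automatically lies in $\VR_\infty$, since $\infty\neq\pspot$, so $f(\infty)\in\resatinf$ is defined.

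For the formula, let $f$ be such a representative. Because $\operatorname{div}(\omega)=-\infty$, the component $\omega_\qspot$ vanishes on $\VR_\qspot$ for every $\qspot\neq\infty$, and $\omega_\infty$ vanishes on $\mathfrak m_\infty$. Hence $\omega_\qspot(f)=0$ whenever $\qspot\notin\{\pspot,\infty\}$ (as $f\in\VR_\qspot$); the value $\omega_\pspot(f)$ depends only on $f+\VR_\pspot$ and so is the value of the induced map $\mathfrak m_\pspot^{-1}/\VR_\pspot\to k$ at $f+\VR_\pspot$; and $\omega_\infty(f)=\omega_\infty(f(\infty))$ for the induced map $\omega_\infty\colon\resatinf\to k$, because $f\in\VR_\infty$. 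Feeding this into the sum formula leaves only $\omega_\pspot(f)+\omega_\infty(f(\infty))=0$, which is the claimed equality. The genuinely non-formal step is the surjectivity of $\rho$ above: one needs not merely a function with a simple pole at $\pspot$, but one that is in addition regular at $\infty$, and this is exactly what Riemann--Roch supplies (equivalently, it expresses the vanishing $H^1(X,\VR_X)=0$ for the genus-$0$ curve $X$); everything else is formal manipulation with the local components of $\omega$.
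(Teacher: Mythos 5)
Your proof is correct and follows essentially the same route as the paper's: both identify the admissible representatives with the Riemann--Roch space $L(\pspot)$ (Chevalley's $\mathfrak{L}(-\pspot)$), use Riemann--Roch on the genus-zero curve plus a dimension count to get surjectivity of the reduction map, and then apply the vanishing of the Weil differential on principal adeles to reduce the sum formula to the two surviving terms at $\pspot$ and $\infty$. The only cosmetic difference is that you spell out why $\dim_k L(\pspot)=\deg\pspot+1$ via the canonical divisor $-\infty$, where the paper simply cites Chevalley.
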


\begin{proof}
  As in \cite[Section~II.1]{Chev}, let\footnote{We use Chevalley's
    notation from~\cite{Chev}. Most references use the notation
    $L(\pspot)$ for Chevalley's $\mathfrak{L}(-\pspot)$.}
  \[
  \mathfrak{L}(-\pspot)=\{f\in F\mid v_\pspot(f)\geq-1\text{ and
    $v_\qspot(f)\geq0$ for $\qspot\neq\pspot$}\}.
  \]
  Since $F$ is an algebraic function field of genus zero, it follows
  from the Riemann--Roch theorem that 
  $\mathfrak{L}(-\pspot)$ is a $k$-vector space of
  dimension~$1+\deg\pspot$; see \cite[Corollary, p.~32]{Chev}. Consider the
  $k$-linear map 
  \[
  \varphi\colon\mathfrak{L}(-\pspot)\to
  \mathfrak{m}_\pspot^{-1}/\VR_\pspot \quad\text{given by }
  \varphi(f)=f+\VR_\pspot.
  \]
  Its kernel consists of elements $f\in F$ such that
  $v_\qspot(f)\geq0$ for all $\qspot\in X^{(1)}$, \textit{i.e.},
  $\ker\varphi=k$. Since
  $\dim(\mathfrak{m}_\pspot^{-1}/\VR_\pspot)=\deg\pspot$, dimension
  count shows that $\varphi$ is onto, which proves the first
  statement. For $f\in\mathfrak{L}(-\pspot)$, we have $f\in\VR_\qspot$
  for all $\qspot\neq\pspot$, hence $\omega_\qspot(f)=0$ for all
  $\qspot\neq\pspot$, $\infty$. Since Weil differentials vanish on
  $F$, it follows that
  \[
  \omega_\pspot(f)+\omega_\infty(f)=0,
  \]
  which completes the proof.
\end{proof}

If
$\pi_{\pspot}\in\VR_{\pspot}$ is a uniformizer at $\pspot$, then
$\mathfrak{m}^{-1}_{\pspot}=\pi_{\pspot}^{-1}\VR_{\pspot}$, and
multiplication by $\pi_{\pspot}$ defines an isomorphism of
$\resatp$-vector spaces
\[
\mu_{\pi_{\pspot}}\colon \mathfrak{m}_{\pspot}^{-1}/\VR_{\pspot}
\overset{\lowsim}{\lra} \VR_{\pspot}/\mathfrak{m}_{\pspot}=\resatp.
\]

\begin{defn}
  \label{def:coherent}
  A choice of uniformizer $\pi_\pspot$ and of $k$-linear functional
  $s_\pspot\colon \resatp\to k$ is said to be \emph{coherent} at
  $\pspot\in X^{(1)}_\af$ if the following diagram commutes:
  \[
  \xymatrix{
  \mathfrak{m}^{-1}_\pspot/\VR_\pspot \ar[rr]^{\omega_\pspot}
  \ar[dr]_{\mu_{\pi_\pspot}} & & k\\
  &\resatp\rlap{.}\ar[ur]_{s_\pspot}&
  }
  \]
\end{defn}

\begin{prop}
  \label{prop:coherent}
  Let $\pi_\pspot$, $s_\pspot$ and $\pi'_\pspot$, $s'_\pspot$ be
  coherent choices of uniformizer and linear functional at $\pspot\in
  X^{(1)}_\af$. The corresponding residue maps $\partial_\pspot^2$,
  ${\partial'_\pspot}^2\colon W(F)\to W(\resatp)$ and transfer maps
  $(s_\pspot)_*$, $(s'_\pspot)_*\colon W(\resatp)\to W(k)$ make the
  following diagram commute:
  \[
  \xymatrix{
  W(F)\ar[r]^{\partial_\pspot^2} \ar[d]_{{\partial'_\pspot}^2} &
  W(\resatp) \ar[d]^{(s_\pspot)_*} \\
  W(\resatp) \ar[r]^{(s'_\pspot)_*} & W(k)\rlap{.}
  }
  \]
\end{prop}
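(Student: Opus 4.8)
I want to show that the square

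\[
\xymatrix{
W(F)\ar[r]^{\partial_\pspot^2} \ar[d]_{{\partial'_\pspot}^2} &
W(\resatp) \ar[d]^{(s_\pspot)_*} \\
W(\resatp) \ar[r]^{(s'_\pspot)_*} & W(k)
}
\]
commutes. The first step is to compare the two coherent data at $\pspot$. Since $\pi_\pspot$ and $\pi'_\pspot$ are both uniformizers of $\VR_\pspot$, we have $\pi'_\pspot = u\,\pi_\pspot$ for some unit $u\in\VR_\pspot^\times$; write $\bar u\in\resatp^\times$ for its residue. On the level of the isomorphisms $\mu_{\pi_\pspot},\mu_{\pi'_\pspot}\colon \mathfrak{m}_\pspot^{-1}/\VR_\pspot\xrightarrow{\lowsim}\resatp$ one checks directly that $\mu_{\pi'_\pspot} = \qf{\bar u}\cdot\mu_{\pi_\pspot}$ in the obvious sense, i.e.\ $\mu_{\pi'_\pspot} = m_{\bar u}\circ\mu_{\pi_\pspot}$ where $m_{\bar u}$ is multiplication by $\bar u$ on $\resatp$. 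Comparing the two coherence triangles of Definition~\ref{def:coherent} — both have the same top edge $\omega_\pspot$ — forces $s'_\pspot = s_\pspot\circ m_{\bar u}$, that is, $s'_\pspot(z) = s_\pspot(\bar u z)$ for all $z\in\resatp$.

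**Reducing to a scaling identity.** The standard comparison of residue maps with respect to different uniformizers gives, for every $\eta\in W(F)$,
\[
{\partial'_\pspot}^2(\eta) = \qf{\bar u}\cdot\partial_\pspot^2(\eta)\in W(\resatp),
\]
since replacing $\pi_\pspot$ by $u\pi_\pspot$ in the defining lattice $W_1\perp W_2$ rescales the bilinear form $\qf{\pi_\pspot^{-1}}b$ on $W_2$ by the unit $u$, and modulo $\mathfrak{m}_\pspot$ this is multiplication by $\bar u$ (see, e.g., the discussion in \cite[Section~II(2.6)]{Knus} or Chapter~IV of the same). With these two identities in hand, the claim becomes the purely algebraic statement that, for every $q\in W(\resatp)$,
\[
(s'_\pspot)_*\bigl(\qf{\bar u}\cdot q\bigr) = (s_\pspot)_*(q)\quad\text{in }W(k),
\]
which follows at once from the projection formula for Scharlau transfers together with $s'_\pspot = s_\pspot\circ m_{\bar u}$: indeed $(s_\pspot\circ m_{\bar u})_*(\qf{\bar u}\cdot q)$ is the transfer along $s_\pspot$ of $m_{\bar u}$ applied (as a scaled base change) to $\qf{\bar u}q$, and $\qf{\bar u}\cdot\qf{\bar u} = \qf{1}$ in $W(\resatp)$.

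**The main obstacle.** The conceptual content is elementary, but the delicate point is bookkeeping: one must be careful that "scaling $s_\pspot$ by $\bar u$" and "scaling the residue by $\bar u$" are set up with \emph{matching} conventions, so that the two factors of $\qf{\bar u}$ genuinely cancel rather than combine. The cleanest way to avoid sign or inversion errors is to phrase everything through the transfer map $(s_\pspot)_*$ attached to a linear functional, and to use the functoriality $(s\circ m_{\bar u})_* = s_*\circ (m_{\bar u})_*$ of transfers under composition together with the fact that $(m_{\bar u})_*$ on $W(\resatp)$ is exactly multiplication by $\qf{\bar u}$ (a one-dimensional scaled base change of $\resatp$ over itself). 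Once these two ingredients are stated precisely, the diagram chase is immediate: for $\eta\in W(F)$,
\[
(s'_\pspot)_*\bigl({\partial'_\pspot}^2(\eta)\bigr)
= (s_\pspot)_*\Bigl(\qf{\bar u}\cdot\qf{\bar u}\cdot\partial_\pspot^2(\eta)\Bigr)
= (s_\pspot)_*\bigl(\partial_\pspot^2(\eta)\bigr).
\]
I expect the only real work to be isolating the identity $s'_\pspot = s_\pspot\circ m_{\bar u}$ from the two coherence triangles and recording the uniformizer-change formula ${\partial'_\pspot}^2 = \qf{\bar u}\cdot\partial_\pspot^2$ with the correct unit; both are short once the normalization of $\mu_{\pi_\pspot}$ is pinned down.
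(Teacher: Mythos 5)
Your proof is correct and follows essentially the same route as the paper: compare the two coherent data via the unit $u=\pi'_\pspot\pi_\pspot^{-1}\in\VR_\pspot^\times$, observe that the residue maps and the transfer maps each differ by multiplication by $\qf{\bar u}$ on $W(\resatp)$, and let the two factors cancel. The only slip is the direction of the relation between the functionals --- coherence gives $s_\pspot = s'_\pspot\circ m_{\bar u}$, not $s'_\pspot = s_\pspot\circ m_{\bar u}$ --- but this is harmless since $\qf{\bar u}=\qf{\bar u^{-1}}$ in $W(\resatp)$, so the Witt-group identity $(s'_\pspot)_* = (s_\pspot)_*\circ\qf{\bar u}$ that you actually use is correct either way.
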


\begin{proof}
  Let $u=\pi'_\pspot\pi_\pspot^{-1}\in\VR_\pspot^\times$, and let
  $u(\pspot)$ be the image of $u$ in $\resatp^\times$. Then
  $\partial_\pspot^2(\qf f) =
  \qf{u(\pspot)}{\partial_\pspot'}^2(\qf f )$ for all $f\in
  F^\times$. On the other hand, $\mu_{\pi'_\pspot}=u(\pspot)
  \mu_{\pi_\pspot}$; hence $s_\pspot(g)=s'_\pspot(u(\pspot) g)$
  for all $g\in \resatp$ because $s_\pspot\circ \mu_{\pi_\pspot} =
  s'_\pspot\circ \mu_{\pi'_\pspot}$ as $\pi_\pspot$, $s_\pspot$ and
  $\pi'_\pspot$, $s'_\pspot$ are coherent choices. Therefore, the
  following diagrams commute:
  \[
  \xymatrix{
  W(F)\ar[r]^{\partial_\pspot^2} \ar[d]_{{\partial'_\pspot}^2} &
  W(\resatp)\\
  W(\resatp)\rlap{,}\ar[ur]_{\qf{u(\pspot)}}
  }
  \quad
  \xymatrix{
  & W(\resatp)\ar[dl]_{\qf{u(\pspot)}}
  \ar[d]^{(s_\pspot)_*}\\
  W(\resatp) \ar[r]^{(s'_\pspot)_*}&W(k)\rlap{.}
  }
  \]
  The proposition follows.
\end{proof} 

\subsection{Transfer maps}
\label{subsec:transfer}

Besides the transfer maps $(s_\pspot)_*\colon W(\resatp)\to W(k)$,
which fit in the exact sequence~\eqref{eq:intro3}, we also need
transfer maps $W(\resatp)\to W^-(D)$ to complete the
sequence~\eqref{eq:intro2}. For any nonzero linear functional
$s_\pspot\colon \resatp\to k$, the map
$s_{\Datp}=\Id_D\otimes s_\pspot\colon \Datp\to D$ is $D$-linear
for the left and right $D$-vector space structures on $\Datp$, and
commutes with  quaternion conjugation. Moreover, if $\xi\in
\Datp$ is such that $s_{\Datp}(\overline\xi \eta)=0$ for all
$\eta\in \Datp$, then writing
$\xi=1\otimes\xi_0+i\otimes\xi_1+j\otimes\xi_2+ij\otimes\xi_3$ with
$\xi_0$, \ldots, $\xi_3\in \resatp$, we get
\[
s_{\Datp}(\overline\xi \cdot 1\otimes \zeta) = s_\pspot(\xi_0\zeta) -
is_\pspot(\xi_1\zeta) - js_\pspot(\xi_2\zeta) - ij s_\pspot(\xi_3\zeta)=0
\quad\text{for all $\zeta\in \resatp$},
\]
hence $\xi_0=\cdots=\xi_3=0$. It follows that $s_{\Datp}$
is an involution trace in the sense of \cite[Section~I.7.2, p.~40]{Knus}. It induces a homomorphism of Witt groups (see \cite[Section~I.10.3, p.~62]{Knus})
\[
(s_{\Datp})_*\colon W^-(\Datp)\lra W^-(D).
\]
On the other hand, restricting the canonical isomorphism
$\Mor$ of Proposition~\ref{prop:Moritath} to the fiber at $\pspot$,
we obtain an isomorphism
\[
\Mor_\pspot\colon W^-(\Datp) \overset{\lowsim}{\lra} W(\resatp,
\Latp).
\]
Next, consider the restriction of the isomorphism
$\varepsilon\colon\LL\to \II(\infty)$ of~\eqref{eq:epsilon} to
the 
fiber at $\pspot$. If $\pspot\neq\infty$, the fiber of $\II(\infty)$
at $\pspot$ is $\resatp$;  hence $\varepsilon_\pspot$ yields an
isomorphism
\[
(\varepsilon_\pspot)_*\colon W(\resatp,\Latp) \overset{\lowsim}{\lra}
W(\resatp).
\]
We let $t_\pspot$ denote the composition (which depends on the choice
of the linear functional $s_\pspot$)
\[
t_\pspot=(s_{\Datp})_*\circ \Mor_\pspot^{-1} \circ
(\varepsilon_\pspot)_*^{-1} \colon W(\resatp)\lra W^-(D)
\quad\text{for $\pspot\in X^{(1)}_\af$.}
\] 
The fiber of $\II(\infty)$ at $\infty$ is
$\mathfrak{m}_\infty/\mathfrak{m}_\infty^2$; hence the fiber of
$\varepsilon$ at $\infty$ yields an isomorphism
\[
(\varepsilon_\infty)_*\colon W(\resatinf,\Latinf) \overset{\lowsim}{\lra}
W\left(\resatinf,\mathfrak{m}_\infty/\mathfrak{m}_\infty^2\right).
\]
Choosing a uniformizer
$\pi_\infty$ at $\infty$, we define a $\resatinf$-linear isomorphism
\[
\mu_{\pi_\infty}\colon \mathfrak{m}_\infty/\mathfrak{m}_\infty^2
\overset{\lowsim}{\lra} \resatinf \quad\text{by}\quad
\mu_{\pi_\infty}\left(f+\mathfrak{m}_\infty^2\right)=
\frac f{\pi_\infty}(\infty) \quad\text{for } f\in\mathfrak{m}_\infty,
\]
hence an isomorphism
\[
\left(\mu_{\pi_\infty}\right)_*\colon
W\left(\resatinf,\mathfrak{m}_\infty/\mathfrak{m}_\infty^2\right) \overset{\lowsim}{\lra}
W(\resatinf).
\]
Choosing $s_\infty=\omega_\infty$ (defined in~\eqref{eq:omegadef}), we
mimic the definition of $t_\pspot$ for $\pspot\in X^{(1)}_\af$ and
set
\[
t_\infty=\left(s_{D_\infty}\right)_*\circ \Mor_\infty^{-1} \circ
\left(\varepsilon_\infty\right)_*^{-1}\circ \left(\mu_{\pi_\infty}\right)_*^{-1} \colon
W(\resatinf)\lra W^{-}(D). 
\]
Note that the map $t_\infty$ depends on the choice of uniformizer
$\pi_\infty$ via $\mu_{\pi_\infty}$.

We next give an explicit description of the transfer maps
$t_\pspot$. Recall that for every $\pspot\in X^{(1)}$, we write
$\Datp=D\otimes_k\resatp$ for the fiber of $\D$ at $\pspot$. We
also let $\Tatp=\T_\pspot\otimes_{\VR_\pspot}\resatp$ denote the
fiber of $\T$ at $\pspot$. If $\pspot\neq\infty$, we write $e_\pspot$
for the image of $e=bi+axj+yij$ in $\Tatp$; similarly, we let
$e_\infty$ denote the image of $ex^{-1}$ in $\Tatinf$. Thus, from
Proposition~\ref{prop:afsec} it follows that
\[
  \Tatp= \Datp e_\pspot, \quad\text{hence}\quad
  \overline\Tatp=e_\pspot\Datp\quad\text{for all $\pspot\in
    X^{(1)}$.} 
\]

\begin{prop}
  \label{prop:explicitransfer}
  Let $\pspot\in X^{(1)}$ and $f\in \resatp^\times$. If\,
  $\pspot=\infty$, define $t_\infty$ by selecting $x^{-1}$ as a
  uniformizer. For all $\pspot\in X^{(1)}$, the Witt class of
  $t_\pspot(\qf f)$ is represented by the transfer along
  $s_{\Datp}$ of the skew-hermitian form
  \[
  h_f\colon \overline\Tatp\times \overline\Tatp\lra \Datp
  \quad\text{defined by}\quad h_f(e_\pspot\xi,
    e_\pspot\eta) = f\overline\xi e_\pspot\eta \quad\text{for }
    \xi, \eta\in \Datp.
  \]
\end{prop}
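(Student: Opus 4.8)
The plan is to unwind the definition of $t_\pspot$ and reduce the statement to an explicit computation of the canonical isomorphism $\Mor_\pspot$ on the rank-one skew-hermitian form $h_f$. By definition $t_\pspot=(s_{\Datp})_*\circ\Mor_\pspot^{-1}\circ(\varepsilon_\pspot)_*^{-1}$ (with the extra factor $(\mu_{\pi_\infty})_*^{-1}$ inserted, and $\pi_\infty=x^{-1}$ selected, when $\pspot=\infty$); hence it is enough to prove that this composite, without the final $(s_{\Datp})_*$, sends $\qf f$ to the Witt class of $(\overline\Tatp,h_f)$, for then applying $(s_{\Datp})_*$ yields the assertion. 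Equivalently, I must show that $\Mor_\pspot\bigl([\overline\Tatp,h_f]\bigr)$ equals the image of $\qf f$ under the canonical isomorphism $W(\resatp)\overset{\lowsim}{\lra}W(\resatp,\Latp)$. I would first pin down that image. For $\pspot\neq\infty$, the isomorphism $\varepsilon$ of~\eqref{eq:epsilon} carries $e$ to $1$ and $\LL_\af=e\VR_\af$, so its fibre $\varepsilon_\pspot$ carries $e_\pspot$ (the image of $e$, a generator of $\Latp$) to $1$; hence $(\varepsilon_\pspot)_*^{-1}(\qf f)$ is the rank-one $\Latp$-valued symmetric bilinear space over $\resatp$ with value $fe_\pspot$ on a generating vector. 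Thus the goal becomes $\Mor_\pspot\bigl([\overline\Tatp,h_f]\bigr)=\qf{fe_\pspot}$.

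The next step is to make $\Mor_\pspot$ explicit on bilinear forms. Restricting Proposition~\ref{prop:Moritath} to the fibre at $\pspot$ and inserting the formula for $\theta_\V$ from Lemma~\ref{lem:newcaniso}, the space $\Mor_\pspot(\overline\Tatp,h_f)$ has underlying $\resatp$-module $\overline\Tatp\otimes_{\Datp}\Tatp$ and bilinear form
\[
\tilde h\bigl(v\otimes t,\ w\otimes t'\bigr)=\mult_\pspot\bigl(\overline t\,h_f(v,w)\otimes t'\bigr)\qquad\text{for }v,w\in\overline\Tatp,\ t,t'\in\Tatp.
\]
By Proposition~\ref{prop:afsec}, $\Tatp=\Datp e_\pspot$ and $\overline\Tatp=e_\pspot\Datp$, so $\mult_\pspot$ identifies $\overline\Tatp\otimes_{\Datp}\Tatp$ with the one-dimensional space $\Latp$. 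Since the reduced trace form on $\Datp$ is nonsingular and $e_\pspot\neq0$, I can choose $c_0\in\Datp$ with $\Trd(e_\pspot c_0)=1$; then $g=e_\pspot c_0\otimes e_\pspot$ generates $\overline\Tatp\otimes_{\Datp}\Tatp$, because $\mult_\pspot(g)=e_\pspot c_0 e_\pspot=\Trd(e_\pspot c_0)e_\pspot=e_\pspot$ by~\eqref{eq:tauto1}. It then remains to compute $\tilde h(g,g)$. Expanding $h_f(e_\pspot c_0,e_\pspot c_0)=f\overline{c_0}e_\pspot c_0$ and using $\overline{e_\pspot}=-e_\pspot$, the identity $e_\pspot\mu e_\pspot=\Trd(e_\pspot\mu)e_\pspot$ from~\eqref{eq:tauto1}, and the standard facts $\Trd(\overline x)=\Trd(x)$ and $\Trd(xy)=\Trd(yx)$, one finds that the reduced traces that occur are $\Trd(e_\pspot c_0)=1$ and $\Trd(e_\pspot\overline{c_0})=-1$, and that the latter cancels the sign produced by $\overline{e_\pspot}=-e_\pspot$, so that $\tilde h(g,g)=fe_\pspot$. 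Hence $\Mor_\pspot\bigl([\overline\Tatp,h_f]\bigr)=\qf{fe_\pspot}$, which proves the proposition for $\pspot\neq\infty$.

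For $\pspot=\infty$ the argument is the same once the extra factor $(\mu_{\pi_\infty})_*$ is accounted for, using the prescribed uniformizer $\pi_\infty=x^{-1}$. By Proposition~\ref{prop:afsec}, $\LL_\infty=ex^{-1}\VR_\infty$, so $\Latinf=e_\infty\resatinf$ with $e_\infty$ the image of $ex^{-1}$; since $\mathfrak{m}_\infty=x^{-1}\VR_\infty$ and $\varepsilon$ carries $e$ to $1$ (hence $ex^{-1}$ to $x^{-1}$), $\varepsilon_\infty$ carries $e_\infty$ to $x^{-1}+\mathfrak{m}_\infty^2$, and $\mu_{\pi_\infty}$ carries $x^{-1}+\mathfrak{m}_\infty^2$ to $1$. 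Hence $\bigl((\mu_{\pi_\infty})_*\circ(\varepsilon_\infty)_*\bigr)^{-1}(\qf f)$ is again the rank-one $\Latinf$-valued form with value $fe_\infty$ on a generating vector, and the computation of the previous paragraph applies verbatim with $e_\pspot$ replaced by $e_\infty$ — the facts $e_\infty^2=0$, $\overline{e_\infty}=-e_\infty$, $\Latinf=e_\infty\resatinf$, $e_\infty\mu e_\infty=\Trd(e_\infty\mu)e_\infty$, and that $\Tatinf$ is the left ideal generated by $e_\infty$ in the fibre of $\D$ at $\infty$, all following from Proposition~\ref{prop:afsec}, Lemma~\ref{lem:tauto}, and the identities displayed immediately before this proposition. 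Applying $(s_{\Datp})_*$ to both sides then finishes the proof.

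I expect the main obstacle to be the passage, in the second paragraph, from the abstract data $(\theta_\V,\mult_\pspot)$ defining $\Mor_\pspot$ to the concrete formula for $\tilde h$: this requires care with the left/right module conventions on $\Mod_\D$ and with the $\sigma$-twist built into $\V^\sharp$, and it is followed by a sign-sensitive trace computation in which the skew-hermitian sign and the relation $\overline{e_\pspot}=-e_\pspot$ must cancel exactly, since otherwise one would land on $\qf{-fe_\pspot}$ instead of $\qf{fe_\pspot}$.
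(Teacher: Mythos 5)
Your proposal is correct and follows essentially the same route as the paper's proof: unwind $t_\pspot$ to reduce to showing $(\varepsilon_\pspot)_*\bigl(\Mor_\pspot(h_f)\bigr)=\qf f$, make $\Mor_\pspot$ explicit via Lemma~\ref{lem:newcaniso} and $\mult_\pspot$, evaluate on the generator $e_\pspot c_0\otimes e_\pspot$ with $\Trd(e_\pspot c_0)=1$ using the identities $e_\pspot^2=0$ and $e_\pspot\mu e_\pspot=\Trd(e_\pspot\mu)e_\pspot$, and observe that the two signs cancel; the case $\pspot=\infty$ is handled identically via $\mu_{x^{-1}}\circ\varepsilon_\infty(e_\infty)=1$. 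The sign bookkeeping you flag as the delicate point indeed works out exactly as you describe.
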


\begin{proof}
  First, suppose  $\pspot\neq\infty$. It suffices to show that the
  skew-hermitian form $h_f$
  satisfies
  $(\varepsilon_\pspot)_*\left(\Mor_\pspot(h_f)\right) = \qf
  f$. By definition,
  \[
  \Mor_\pspot(h_f)\colon \left(\overline\Tatp\otimes_{\Datp}
  \Tatp\right) \times \left(\overline\Tatp\otimes_{\Datp}
  \Tatp\right) \lra \overline\Tatp\cdot\Tatp= \Latp
  \]
  carries $(e_\pspot \xi_1\otimes \xi_2 e_\pspot,
  e_\pspot\eta_1\otimes \eta_2 e_\pspot)$ to $-fe_\pspot
  \overline{\xi_2}\overline{\xi_1} e_\pspot \eta_1\eta_2 e_\pspot$ for
  $\xi_1$, $\xi_2$, $\eta_1$, $\eta_2\in \Datp$. Note that
  $\overline\Tatp\otimes_{\Datp}\Tatp$ is a
  $\resatp$-vector space of dimension~$1$, isomorphic to
  $\resatp$ under the composition
  \[
  \overline\Tatp\otimes_{\Datp}\Tatp \overset{m_\pspot}\lra
  \Latp \overset{\varepsilon_\pspot}\lra \resatp,
  \]
  which maps $e_\pspot\xi_1\otimes\xi_2 e_\pspot$ to
  $\Trd(\xi_1\xi_2e_\pspot)$ (see~\eqref{eq:epsilon}). If $\xi\in
  \Datp$ is such that $\Trd(\xi e_\pspot)=1$, then
  $e_\pspot\xi\otimes e_\pspot$ is a $\resatp$-base of
  $\overline\Tatp\otimes_{\Datp}\Tatp$, and
  \[
  \varepsilon_\pspot\left(\Mor_\pspot\left(h_f\right)\left(e_\pspot\xi \otimes
  e_\pspot, e_\pspot\xi\otimes e_\pspot\right)\right) = -f \Trd\left(\overline\xi
  e_\pspot \xi e_\pspot\right).
  \]
  Since $e_\pspot\xi e_\pspot = e_\pspot(\xi e_\pspot - e_\pspot
  \overline\xi) = e_\pspot\Trd(\xi e_\pspot)=e_\pspot$, we have
  \[
  -f\Trd\left(\overline\xi e_\pspot \xi e_\pspot\right) = -f \Trd\left(\overline\xi  e_\pspot\right) = f\Trd\left(\overline{e_\pspot}\overline\xi\right)=f.
  \]
  The proposition is thus proved for $\pspot\neq\infty$.

  For $\pspot=\infty$, the map $\varepsilon_\infty\colon \Latinf \to
  \mathfrak{m}_\infty/\mathfrak{m}^2_\infty$ carries $e_\infty$ to
  $x^{-1}+\mathfrak{m}_\infty^2$; hence
  $\mu_{x^{-1}}\circ\varepsilon_\infty$ maps $e_\infty$ to $1$. The
  same arguments as in the case where $\pspot\neq\infty$ yield the
  proof. 
\end{proof}

\begin{prop}
  \label{prop:coherent2}
  For $\pspot\in X^{(1)}_\af$, let
  $\pi_\pspot$, $s_\pspot$ and $\pi'_\pspot$, $s'_\pspot$ be
  coherent choices of uniformizer and linear functional, and let
  $\partial_\pspot^2$, 
  ${\partial'_\pspot}^2\colon W(F)\to W(\resatp)$ and 
  $t_\pspot$, $t'_\pspot\colon W(\resatp)\to W^-(D)$ be the
  corresponding residue and transfer maps. Similarly, let $\pi_\infty$,
  $\pi'_\infty$ be uniformizers at 
  $\infty$, and let $\partial^2_\infty$, ${\partial'_\infty}^2\colon
  W(F)\to W(\resatinf)$ and $t_\infty$, $t'_\infty\colon W(\resatinf)\to
  W^{-}(D)$ be the corresponding residue and transfer maps. For every
  $\qspot\in X^{(1)}$, the
  following diagram commutes:
  \[
  \xymatrix{
  W(F)\ar[r]^{\partial_\qspot^2} \ar[d]_{{\partial'_\qspot}^2} &
  W(\resatq) \ar[d]^{t_\qspot} \\
  W(\resatq) \ar[r]^{t'_\qspot} & W^{-}(D)\rlap{.}
  }
  \]
\end{prop}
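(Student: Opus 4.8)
The plan is to follow the pattern of the proof of Proposition~\ref{prop:coherent}, separating the effect of a change of uniformizer on the residue map $\partial_\qspot^2$ from its effect on the transfer $t_\qspot$. Write $u=\pi'_\qspot\pi_\qspot^{-1}\in\VR_\qspot^\times$ and let $u(\qspot)\in\resatq^\times$ be its residue. The computation in the proof of Proposition~\ref{prop:coherent}, which applies verbatim at $\infty$ as well, gives $\partial_\qspot^2=\qf{u(\qspot)}\cdot{\partial'_\qspot}^2$ as homomorphisms $W(F)\to W(\resatq)$, where $\qf{u(\qspot)}\cdot{-}$ denotes the (involutive) automorphism of $W(\resatq)$ given by scaling by $u(\qspot)$. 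So the proposition reduces to the identity
\[
t'_\qspot(\psi)=t_\qspot\bigl(\qf{u(\qspot)}\cdot\psi\bigr)\qquad\text{for all }\psi\in W(\resatq),
\]
since then $t'_\qspot\bigl({\partial'_\qspot}^2(\varphi)\bigr)=t_\qspot\bigl(\qf{u(\qspot)}\cdot{\partial'_\qspot}^2(\varphi)\bigr)=t_\qspot\bigl(\partial_\qspot^2(\varphi)\bigr)$ for all $\varphi\in W(F)$, which is exactly the claimed commutativity.

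I would establish this identity separately for $\qspot\ne\infty$ and $\qspot=\infty$. Suppose first $\qspot\ne\infty$. Here $t_\qspot=(s_{\Datp})_*\circ\Mor_\qspot^{-1}\circ(\varepsilon_\qspot)_*^{-1}$ depends on the coherent pair only through the involution trace $s_{\Datp}=\Id_D\otimes s_\qspot$. Since $\pi'_\qspot=u\pi_\qspot$ we have $\mu_{\pi'_\qspot}=u(\qspot)\cdot\mu_{\pi_\qspot}$, so comparing the coherence relations $s_\qspot\circ\mu_{\pi_\qspot}=\omega_\qspot=s'_\qspot\circ\mu_{\pi'_\qspot}$ yields $s_\qspot(g)=s'_\qspot\bigl(u(\qspot)g\bigr)$ for all $g\in\resatq$, exactly as in Proposition~\ref{prop:coherent}. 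Tensoring with $\Id_D$, this says that $s_{\Datp}$ equals $s'_{\Datp}$ precomposed with multiplication by the conjugation-invariant central element $u(\qspot)$ of $\Datp$; hence $(s_{\Datp})_*(h)=(s'_{\Datp})_*\bigl(\qf{u(\qspot)}\cdot h\bigr)$ for every skew-hermitian form $h$ over $(\Datp,\invo)$, i.e.\ $(s_{\Datp})_*=(s'_{\Datp})_*\circ(\qf{u(\qspot)}\cdot{-})$ on $W^-(\Datp)$. Because $\Mor_\qspot$ and $(\varepsilon_\qspot)_*$ are isomorphisms of $W(\resatq)$-modules, the scaling twist $\qf{u(\qspot)}\cdot{-}$ commutes past them, so $t_\qspot=t'_\qspot\circ(\qf{u(\qspot)}\cdot{-})$; applying this to $\qf{u(\qspot)}\cdot\psi$ and using $\qf{u(\qspot)^2}=\qf1$ in $W(\resatq)$ gives the displayed identity.

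Now suppose $\qspot=\infty$. Then $s_\infty=s'_\infty=\omega_\infty$, so the only change lies in the uniformizer, and it enters $t_\infty=(s_{D_\infty})_*\circ\Mor_\infty^{-1}\circ(\varepsilon_\infty)_*^{-1}\circ(\mu_{\pi_\infty})_*^{-1}$ solely through the last factor. From $\pi'_\infty=u\pi_\infty$ one reads off $\mu_{\pi'_\infty}=u(\infty)^{-1}\cdot\mu_{\pi_\infty}$ as isomorphisms $\mathfrak{m}_\infty/\mathfrak{m}_\infty^2\to\resatinf$, hence $(\mu_{\pi'_\infty})_*=(\qf{u(\infty)}\cdot{-})\circ(\mu_{\pi_\infty})_*$ (using $\qf{u(\infty)^{-1}}=\qf{u(\infty)}$ in $W(\resatinf)$), and therefore $(\mu_{\pi'_\infty})_*^{-1}=(\mu_{\pi_\infty})_*^{-1}\circ(\qf{u(\infty)}\cdot{-})$. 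Composing with the uniformizer-independent factors $(s_{D_\infty})_*\circ\Mor_\infty^{-1}\circ(\varepsilon_\infty)_*^{-1}$ yields $t'_\infty=t_\infty\circ(\qf{u(\infty)}\cdot{-})$, which is the displayed identity.

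Most of this is a direct transcription of Proposition~\ref{prop:coherent}; the change-of-uniformizer formula for $\partial_\qspot^2$ is literally its proof. The one place that needs care is tracking the scalar $u(\qspot)$ through the chain of canonical maps defining $t_\qspot$ --- the Morita isomorphism $\Mor_\qspot$, the trivialization $(\varepsilon_\qspot)_*$ of $\LL$ along $\II(\infty)$, and, at $\infty$, the trivialization $(\mu_{\pi_\infty})_*$ of the cotangent line at $\infty$ --- together with the (standard) fact that $\Mor_\qspot$ and $(\varepsilon_\qspot)_*$ are linear over $W(\resatq)$, so that the scaling twist $\qf{u(\qspot)}\cdot{-}$ can be slid across them; this is the only input beyond Proposition~\ref{prop:coherent}.
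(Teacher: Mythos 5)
Your proof is correct and follows the same route as the paper: for $\qspot\neq\infty$ the paper simply invokes the argument of Proposition~\ref{prop:coherent} (which you transcribe faithfully, correctly noting that $t_\qspot$ depends only on $s_\qspot$ there), and for $\qspot=\infty$ the paper's one-line verification $(\mu_{\pi_\infty})_*^{-1}\circ\partial_\infty^2=(\mu_{\pi'_\infty})_*^{-1}\circ{\partial'_\infty}^2$ is exactly the cancellation you carry out. Your bookkeeping of the twist $\qf{u(\qspot)}$ through the residue and transfer maps is accurate in both cases.
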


\begin{proof}
  For $\qspot\neq\infty$, the same arguments as in the proof of
  Proposition~\ref{prop:coherent} yield the proof. For
  $\qspot=\infty$, it is readily verified that
  $(\mu_{\pi_\infty})_*^{-1}\circ\partial_\infty^2 =
  (\mu_{\pi'_\infty})_*^{-1}\circ {\partial'_\infty}^2$. The
  proposition follows.
\end{proof}

\section{Exactness of the sequences}
\label{sec:exact}

In this section, we define the maps in the sequences~\eqref{eq:intro2}
and \eqref{eq:intro3}, and prove their exactness.
We start with the sequence \eqref{eq:intro3}. The isomorphism
$\varepsilon\colon\LL\xrightarrow{\lowsim}\II(\infty)$ (see
\eqref{eq:epsilon}) yields a Witt group
isomorphism
\[
\varepsilon_*\colon W(X,\LL) \overset{\lowsim}{\lra} W(X,\II(\infty)).
\]
Recall the isomorphism $M\colon W^-(D) \xrightarrow{\lowsim}
W(X,\LL)$ of Theorem~\ref{thm:M}.
Let $\rho\colon W^-(D)\to W(F)$ be the composition
\[
  W^-(D)\overset{M}\lra
  W(X,\LL) \overset{\varepsilon_*}\lra W\left(X,\II(\infty)\right)
  \xrightarrow{\res_F} W(F).
\]

\begin{thm}
  \label{thm:exseq1}
  Let $\pi_\pspot$, $s_\pspot$ be a coherent choice of uniformizer and
  linear functional at each point $\pspot\in X^{(1)}_\af$, and let
  $s_\infty=\omega_\infty$. 
  The following sequence is exact:
  \begin{equation}
  \label{eq:exseq1}
  0\lra W^-(D) \overset{\rho}\lra W(F) \overset{\delta'}\lra
  \bigoplus_{\pspot\in X^{(1)}} W(\resatp)\xrightarrow{\sum(s_\pspot)_*}
  W(k) \xrightarrow{\ext_D} W(D)\lra 0.
  \end{equation}
\end{thm}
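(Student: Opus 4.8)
The plan is to establish exactness of~\eqref{eq:exseq1} term by term, using the purity property of $W(X,\II(\infty))$ together with the structural results of Sections~\ref{sec:tauto}--\ref{sec:restrans}. First, exactness at $W^-(D)$ amounts to injectivity of $\rho$, which is immediate: $M$ is an isomorphism by Theorem~\ref{thm:M}, $\varepsilon_*$ is an isomorphism since $\varepsilon$ is, and $\res_F\colon W(X,\II(\infty))\to W(F)$ is injective by the purity property recalled in the introduction (see~\cite[Definition~8.2 and Corollary~10.3]{BW}; note $\II(\infty)\simeq\LL$ generates $\Pic(X)$, so purity applies). Next, exactness at $W(F)$ is precisely the statement that $\image\rho=\ker\delta'$. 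The inclusion $\image\rho\subset\ker\delta'$ follows from the computation of the residue maps on the generic fiber of a bilinear space with values in $\II(\infty)$ carried out in Section~\ref{sec:restrans}: such a form lies in $\ker\delta'$ by construction of $\delta'$. The reverse inclusion $\ker\delta'\subset\image\rho$ is exactly the second exact sequence in~\eqref{eq:intro1}, i.e. the purity statement that the image of $\res_F\colon W(X,\II(\infty))\to W(F)$ equals $\ker\delta'$; composing with the isomorphisms $M$ and $\varepsilon_*$ identifies this image with $\image\rho$. (This recovers Parimala's result~\cite[Theorem~5.1]{Pari}.)

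For exactness at $\bigoplus_\pspot W(\resatp)$, I would first check that $\left(\sum(s_\pspot)_*\right)\circ\delta'=0$. This is the reciprocity law for the Weil differential $\omega=\frac{dx}{2y}$: since $\omega$ vanishes on $F$, we have $\sum_\pspot\omega_\pspot=0$, and the coherence condition of Definition~\ref{def:coherent} together with~\eqref{eq:omegadef} translates this into $\sum(s_\pspot)_*\circ\partial^2_\pspot$ (resp.\ with $\partial^1_\infty$ at $\infty$) vanishing on the image of $\delta'$; this is the classical fact that the second residue sequence for $W(F)$ composed with transfers is zero (see~\cite[Section~VI.3]{Chev} for the differential computation). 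The harder half is $\ker\left(\sum(s_\pspot)_*\right)\subset\image\delta'$. Here I would invoke the full Milnor-type exact sequence for the Witt group of the rational function field $F=k(X)$ of the genus-zero curve $X$: the sequence $0\to W(k)\to W(F)\to\bigoplus_\pspot W(\resatp)\xrightarrow{\sum(s_\pspot)_*}W(k)\to 0$ is exact (this is Milnor's theorem for $k(t)$ transported to $F$ via the coordinatization of Section~\ref{subsec:coord}; note $F\cong k(x)(\sqrt{ax^2+b})$, but more directly $X$ has a closed point of degree $2$ and one works with the conic directly as in~\cite{Pf,Pari}). Combining this with the previous exactness at $W(F)$ and the fact that $\res_F\colon W(k)\to W(F)$ factors through $W(X,\II(\infty))$ only on the subgroup $\rho(W^-(D))$'s complement, a diagram chase gives $\ker\left(\sum(s_\pspot)_*\right)=\image\delta'$.

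Finally, exactness at $W(k)$ and $W(D)$ comes from Lemma~\ref{lem:quat}: that lemma gives the exact row $0\to W(D)\xrightarrow{s_D}W(k)$ and the exact column $W(k)\xrightarrow{\ext_D}W(D)\to 0$, so $\ext_D$ is surjective (exactness at $W(D)$) and $\ker(\ext_D)=\image(s_D\circ\text{something})$; the identification $\ker(\ext_D)=\image\left(\sum(s_\pspot)_*\right)$ is the content of~\cite[Theorem~10.1.7]{Sch} combined with the cokernel computation $\coker\delta'\simeq\ker(W(k)\to W(D))$ announced in the introduction. Concretely, I would show $\sum(s_\pspot)_*$ lands in $\ker(\ext_D)$ by a direct computation on a generator $\qf f$ at a single point (using that $\ext_D\circ(s_\pspot)_*$ factors through scalar extension to a splitting field of $D$, where everything becomes hyperbolic, cf.\ the structure of $\Datp$ for $\pspot\neq\infty$), and conversely that any class in $\ker(\ext_D)$, which by Lemma~\ref{lem:quat} is $n_D\cdot W(k)$ (multiples of the norm form), is hit—this last point is where the conic $X$ enters, since a multiple of the norm form of $D=(a,b)_k$ becomes isotropic over $F=k(X)$ and hence its residues account for the class. \textbf{The main obstacle} I anticipate is the exactness at $\bigoplus_\pspot W(\resatp)$ and the precise bookkeeping at $W(k)$: one must carefully track how the choice of uniformizers and the coherence condition make the transfer maps $(s_\pspot)_*$ fit together globally, and Proposition~\ref{prop:coherent} (showing the answer is independent of coherent choices) is essential so that one may compute with the convenient choice $s_\pspot$ attached to $\omega$ at every point, including reconciling the $s_\infty=\omega_\infty$ normalization with the $\partial^1_\infty$ appearing in $\delta'$ rather than $\partial^2_\infty$.
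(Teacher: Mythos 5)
Your treatment of the first two terms is correct and matches the paper: injectivity of $\rho$ and exactness at $W(F)$ both reduce, via the isomorphisms $M$ and $\varepsilon_*$, to the purity statement for $W(X,\II(\infty))$ from \cite[Corollary~10.3]{BW}. (Minor point: Parimala's \cite[Theorem~5.1]{Pari} is not the purity statement at $W(F)$ but precisely the exactness at the next term; her description of $\ker\delta'$ is Theorem~5.3.)

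The genuine gap is at $\bigoplus_\pspot W(\resatp)$. You propose to deduce $\ker\bigl(\sum(s_\pspot)_*\bigr)\subset\image\delta'$ from a ``Milnor-type'' exact sequence $0\to W(k)\to W(F)\to\bigoplus_\pspot W(\resatp)\to W(k)\to 0$ for $F=k(X)$. That sequence is \emph{false} when $X$ has no rational point: $W(k)\to W(F)$ has kernel the ideal generated by the norm form of $D$ (Proposition~\ref{prop:WC}), and $\sum(s_\pspot)_*$ is not surjective --- by Pfister its image is $\ker(n_D)$. Measuring the failure of exactness of the Milnor sequence for a non-split conic is exactly the content of this theorem (and of Theorem~\ref{thm:exseq2}), so the argument is circular; the subsequent ``diagram chase'' is not supplied and cannot be, since its main input is wrong. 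The paper handles this term by citing Parimala's \cite[Theorem~5.1]{Pari} (a substantive result) and using Proposition~\ref{prop:coherent} to pass from her specific coherent choices to arbitrary ones; if you want a self-contained proof you would need to reproduce an argument of that strength.

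There is a second error at $W(k)$: you assert that $\ker(\ext_D)$ equals $n_D\cdot W(k)$, the set of multiples of the norm form. Lemma~\ref{lem:quat} gives $s_D\circ\ext_D=n_D$ with $s_D$ injective, hence $\ker(\ext_D)=\ker(n_D)$, the \emph{annihilator} of the norm form, which in general differs from its image (e.g.\ over $k=\mathbb{R}$, $\ker(n_D)=0$ while $n_DW(k)=4\mathbb{Z}$). You have conflated $\ker(\ext_D)$ with $\ker(\ext_F)=\image(n_D)$. The required input is Pfister's \cite[Theorem~6a]{Pf}, which identifies $\image\bigl(\sum(s_\pspot)_*\bigr)$ with $\ker(n_D)$; combined with Lemma~\ref{lem:quat} this gives exactness at $W(k)$ and at $W(D)$. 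Your sketch (``a multiple of the norm form becomes isotropic over $F$, hence its residues account for the class'') addresses the wrong subgroup and would not close this step.
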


\begin{proof}
  By the definition of $\rho$, the following diagram commutes:
  \[
  \xymatrix{W^-(D) \ar[rr]^{M} \ar[dr]_{\rho}&& W(X,\LL)
    \ar[dl]^{\res_F\circ\varepsilon_*}\\ 
  &W(F)\rlap{.}&
  }
  \]
The exactness of the following sequence is the purity
    property of $W\left(X,\II(\infty)\right)$ established
    in~\cite[Corollary~10.3]{BW}: 
\begin{equation*}
0\lra W\left(X,\II(\infty)\right)\xrightarrow{\res_F} W(F)
\overset{\delta'}\lra 
\bigoplus_{\pspot\in X^{(1)}} W(\resatp).
\end{equation*}
  Since $M$ and $\varepsilon_*$ are isomorphisms, it follows that the
  sequence~\eqref{eq:exseq1} is exact at $W^-(D)$ and $W(F)$.
  The exactness at $\bigoplus_{\pspot\in X^{(1)}}W(\resatp)$ was proved by
  Parimala~\cite[Theorem~5.1]{Pari}. (It is straightforward to check that
  the particular choice of uniformizers and linear functionals in
  \cite[Section~4]{Pari} is coherent, and Proposition~\ref{prop:coherent}
  shows that the exactness of the sequence does not depend on this
  choice.) In \cite[Theorem~6a]{Pf}, Pfister shows that the image of $\sum
  (s_\pspot)_*$ is the kernel of $n_D$ (multiplication by the norm
  form of~$D$). Therefore, the exactness at $W(k)$ and $W(D)$ follows from
  Lemma~\ref{lem:quat}. 
\end{proof}

For the rest of this section, we focus on the
sequence~\eqref{eq:intro2}. Our goal is to prove the following. 

\begin{thm}
  \label{thm:exseq2}
  Let $\pi_\infty$ be a uniformizer at $\infty$, and let
  $\partial^2_\infty$ and $t_\infty$ be the corresponding residue and
  transfer maps. For all $\qspot\in X^{(1)}_\af$, let $\pi_\qspot$,
  $s_\qspot$ be a coherent choice of uniformizer and linear
  functional, and let $\partial^2_\qspot$ and $t_\qspot$ be the
  corresponding residue and transfer maps. The following sequence is
  exact:
  \begin{equation}
  \label{eq:exseq2}
  0\lra W(D) \overset{s_D}\lra W(k) \xrightarrow{\ext_F} W(F)
  \overset{\delta}\lra \bigoplus_{\pspot\in X^{(1)}} W(\resatp)
  \xrightarrow{\sum t_\pspot} W^-(D) \lra 0.
  \end{equation}
\end{thm}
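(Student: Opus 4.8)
The plan is to establish exactness term by term, exploiting the strong parallel with the sequence \eqref{eq:exseq1} of Theorem~\ref{thm:exseq1} and using the octagon of Theorem~\ref{thm:octa} at the delicate next-to-last term.

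\emph{Exactness at $W(D)$ and $W(k)$.} This is immediate from Lemma~\ref{lem:quat}: that lemma gives injectivity of $s_D$ and exactness of the column $W(k)\xrightarrow{\ext_D}W(D)\to 0$, but for the present sequence we need exactness of the \emph{row} $0\to W(D)\xrightarrow{s_D}W(k)$ together with $\image(s_D)=\ker(\ext_F)$. The inclusion $\image(s_D)\subseteq\ker(\ext_F)$ holds because a hermitian form $h$ over $D$ and its associated quadratic form $q_h$ become isometric after scalar extension to $F$ (where $D$ splits and the Morita data trivializes); the reverse inclusion follows from the purity sequence \eqref{eq:intro1} once we know exactness at $W(F)$, namely $\ker(\delta)=\image(\ext_F\circ s_D^{-1}?)$—more precisely, $\ker\delta=\image(W(X)\to W(F))$ by \eqref{eq:intro1}, and by Proposition~\ref{prop:WC} (cited in the introduction) $W(X)$ is the cokernel of an injective transfer $W(D)\to W(k)$, which one identifies with $s_D$; combining these gives $\ker\delta=\image(\ext_F)$ and $\ker(\ext_F)=\image(s_D)$.

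\emph{Exactness at $W(F)$ and surjectivity onto $W^-(D)$.} Exactness at $W(F)$ is exactly the content just described: the purity property of $W(X)$ from \eqref{eq:intro1} gives $\ker\delta=\image(W(X)\xrightarrow{\res_F}W(F))$, and identifying $W(X)$ with $\coker(s_D\colon W(D)\to W(k))$ via Proposition~\ref{prop:WC} turns this into $\ker\delta=\image(\ext_F)$. For surjectivity of $\sum t_\pspot$ onto $W^-(D)$, note that each $t_\pspot$ factors through $\Mor_\pspot^{-1}$ and the transfer $(s_{\Datp})_*$; by Proposition~\ref{prop:explicitransfer} the class $t_\pspot(\qf f)$ is the transfer along $s_{\Datp}$ of $h_f$. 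Choosing a closed point $\pspot$ of degree~$1$ over a splitting field—or more directly, using that $W^-(D)$ is generated by rank-one forms $\qf{-d}$ with $\sigma(d)=-d$ (as in the proof of Theorem~\ref{thm:M})—one checks that a single suitably chosen point already hits any prescribed generator, so $\sum t_\pspot$ is onto. Alternatively, surjectivity is forced once exactness is known everywhere else, since the first four terms then show the image has the right ``size'' relative to the Euler-characteristic bookkeeping; but a direct argument via Proposition~\ref{prop:explicitransfer} is cleaner.

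\emph{Exactness at $\bigoplus_\pspot W(\resatp)$—the main obstacle.} This is where the real work lies, and where the octagon of Theorem~\ref{thm:octa} (Lewis's exact sequence) enters. The inclusion $\image\delta\subseteq\ker(\sum t_\pspot)$ should follow from a local-global compatibility: a class in $W(F)$ lies in $W(X,\II(\infty))$ after a correction, and the composite $\sum t_\pspot\circ\delta$ can be computed divisor-by-divisor and shown to vanish using that Weil differentials sum to zero (this is why the coherent choices of $\pi_\pspot$, $s_\pspot$ and the choice $s_\infty=\omega_\infty$ were set up in Section~\ref{sec:restrans}, and Proposition~\ref{prop:coherent2} guarantees independence of these choices). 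The hard direction is $\ker(\sum t_\pspot)\subseteq\image\delta$. The strategy is to take a family $(q_\pspot)_\pspot$ with $\sum t_\pspot(q_\pspot)=0$; using the purity sequence for $W(X,\LL)\simeq W(X,\II(\infty))$ one must produce a form over $F$ with the prescribed second residues. The obstruction to doing so naively lives in $W^1$-type groups / the cokernel of $\delta$, which by the introduction is $W^-(D)$; the condition $\sum t_\pspot(q_\pspot)=0$ is precisely what kills this obstruction, \emph{provided} one knows $\coker\delta\simeq W^-(D)$ compatibly with the maps $t_\pspot$. Establishing this compatibility is the crux: it requires comparing the abstract cokernel description (via $W^1(X)$ and Balmer–Walter, or via Xie's sequence) with the concrete transfer maps $t_\pspot$ built from $\Mor_\pspot$ and $\varepsilon_\pspot$. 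I expect this to be carried out by reducing to a maximal subfield $\ell=K\subset D$ (as in Proposition~\ref{prop:newT}\eqref{p:nT-d}), where $X_\ell\to X$ has degree~$2$ and $\T$ becomes $\tr_{\ell/k}(\VR_{X_\ell}(-1))$, and then invoking the octagon of Theorem~\ref{thm:octa} to relate the residue data over $X$ to residue data over the rational curve $X_\ell$ (a genus-zero function field, where the analogous sequence is classical). The octagon's exactness at the relevant vertex is what finally yields $\ker(\sum t_\pspot)=\image\delta$.
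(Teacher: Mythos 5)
Your overall architecture matches the paper's (purity of $W(X)$ plus Proposition~\ref{prop:WC} for the first three terms; the octagon for the middle; rank-one transfers for surjectivity), but the two hardest steps are not actually carried out, and one of them is set up circularly. First, the nullity $\image\delta\subseteq\ker(\sum t_\pspot)$ does not simply ``follow from'' the residue theorem for Weil differentials. The actual mechanism is to reduce to a square-free $f=\pi_1\cdots\pi_n\in\VR_\af$, assemble the local transfers $t_{\pspot_\alpha}$ into a single global skew-hermitian form $(S_D)_*(H)$ on $\overline{\T_\af}/f\overline{\T_\af}$ via the primary decomposition of $f^{-1}\VR_\af/\VR_\af$ and the functional $S$ built from $\omega_\infty$, exhibit the explicit $D$-basis $(ex^\alpha)_{\alpha=0}^{n-1}$, and then either produce a Lagrangian (when $v_\infty(f)$ is even) or show that the leftover rank-one piece is exactly $-t_\infty(\partial^2_\infty\qf f)$ (when it is odd). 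None of this is in your proposal, and it is the bulk of the work. Second, your argument for $\ker(\sum t_\pspot)\subseteq\image\delta$ assumes ``$\coker\delta\simeq W^-(D)$ compatibly with the maps $t_\pspot$,'' which is precisely the statement being proved; invoking it is circular. The paper instead imports Pfister's exact sequence $W(F)\xrightarrow{\delta''}\bigoplus_{\pspot\in X^{(1)}_\af}W(\resatp)\xrightarrow{\sum(s_\pspot)_*}W(k)/J$ as an external input, constructs an isomorphism $\Theta\colon W(k)/J\to W^-(K,\invo)$ intertwining $\sum(s_\pspot)_*$ with $\pi_1\circ\sum t_\pspot$, and uses the octagon only to handle classes supported at $\infty$ (via the identification $t_\infty=\sigma_2\circ\Psi$ and the exactness of $W^-(D)\xrightarrow{\pi_2}W(K)\xrightarrow{\sigma_2}W^-(D)$); a diagram chase then concludes. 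Your proposed reduction to the rational curve $X_\ell$ for a maximal subfield $\ell$ is not how this goes, and you give no indication of how the residue data on $X$ would transfer to $X_\ell$.

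Two smaller points. For surjectivity of $\sum t_\pspot$: there is no closed point of degree~$1$ on $X$, since $X(k)=\emptyset$ when $D$ is a division algebra; the correct argument takes an arbitrary pure quaternion $q=\lambda_1i+\lambda_2j+\lambda_3ij$, intersects the conic with a suitable line in $\mathbb{P}(D^0)$ to get a degree-$2$ point $\pspot\neq\infty$, and solves for $f\in\resatp^\times$ so that $s_{\Datp}(fe_\pspot)=q$; your ``Euler-characteristic bookkeeping'' alternative is not a valid argument for surjectivity of a map of Witt groups. For the first three terms, your reasoning is essentially the paper's, though note that the inclusion $\image(s_D)\subseteq\ker(\ext_F)$ is cleanest via Lemma~\ref{lem:quat}: $\image(s_D)$ is the ideal generated by the norm form of $D$, which is exactly the kernel of $W(k)\to W(F)$ by the subform theorem.
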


We break the proof into several steps.

\subsection{Exactness at the first three terms}

The exactness of the sequence
\begin{equation}
  \label{eq:WC}
0\lra W(X) \xrightarrow{\res_F} W(F) \overset{\delta}\lra
\bigoplus_{\pspot\in X^{(1)}} W(\resatp)
\end{equation}
is the purity property of $W(X)$. It
follows from Knebusch's general result \cite[Satz~13.3.6]{Kneb}.
The first terms of the exact sequence~\eqref{eq:exseq2} are obtained
by pasting this sequence with the following. 

\begin{prop}
  \label{prop:WC}
  The following sequence is exact:
  \[
  0\lra W(D)\overset{s_D}\lra W(k) \xrightarrow{\ext_X} W(X) \lra 0.
  \]
\end{prop}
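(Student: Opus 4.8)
The plan is to prove exactness at the three inner terms separately, using throughout that $\res_F\colon W(X)\to W(F)$ is injective (the purity sequence~\eqref{eq:WC}) and that $\res_F\circ\ext_X=\ext_F$, since the generic fibre of the pullback $\phi\otimes_k\VR_X$ of a symmetric bilinear space $\phi$ over $k$ is $\phi_F$. Exactness at $W(D)$ is already contained in Lemma~\ref{lem:quat}, which asserts that $s_D$ is injective. For exactness at $W(k)$: by Lemma~\ref{lem:quat} the map $\ext_D\colon W(k)\to W(D)$ is surjective and $s_D\circ\ext_D=n_D$, so $\image(s_D)=s_D(\ext_D(W(k)))=\image(n_D)$, the principal ideal generated by the norm form of $D$. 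Since $D$ splits over $F$, this norm form is hyperbolic over $F$, so for every $\phi\in W(k)$ the space $\ext_X(n_D\cdot\phi)$ has trivial generic fibre and therefore vanishes in $W(X)$ by injectivity of $\res_F$; hence $\image(s_D)\subseteq\ker(\ext_X)$. Conversely, if $\ext_X(\phi)=0$, then $\ext_F(\phi)=\res_F(\ext_X(\phi))=0$, so $\phi$ lies in the kernel of the restriction map from $W(k)$ to the function field of the conic $X$, which by the classical computation for the conic attached to a quaternion algebra (see, e.g., \cite{EKM}) is exactly $\image(n_D)=\image(s_D)$. Thus $\ker(\ext_X)=\image(s_D)$ (and incidentally $\ker(\ext_X)=\ker(\ext_F)$).

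It remains to show that $\ext_X$ is surjective, which is the substantial point. The plan is to mirror the surjectivity argument in the proof of Theorem~\ref{thm:M}, now with the trivial twist in place of $\LL_\sigma$: by Pumpl\"un's description of the Witt group of a conic (the untwisted companion of \cite[Theorem~4.3]{Pump2}), $W(X)$ is generated by the Witt classes of transfers $\tr_{\ell/k}(\N)$, where $\ell$ runs over the maximal subfields of $D$ and $\N$ is a self-dual invertible $\VR_{X_\ell}$-module for the $\VR_{X_\ell}$-valued duality. Since $\Pic(X_\ell)\cong\Pic(\mathbb{P}^1_\ell)\cong\mathbb{Z}$ is torsion-free, any such $\N$ is isomorphic to $\VR_{X_\ell}$, and a nonsingular symmetric bilinear form on it is $\qf u$ for some $u\in\Gamma(X_\ell,\VR_{X_\ell})^\times=\ell^\times$; thus $(\VR_{X_\ell},\qf u)$ is the pullback to $X_\ell$ of the rank-one form $\qf u$ over $\ell$. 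Applying flat base change to the cartesian square formed by $X_\ell\to X$ and $\Spec\ell\to\Spec k$ (equivalently, the projection formula for $\tr_{\ell/k}$), its transfer to $X$ is the pullback to $X$ of the class $\tr_{\ell/k}\qf u\in W(k)$, hence lies in $\image(\ext_X)$. Therefore every generator of $W(X)$ lies in $\image(\ext_X)$, so $\ext_X$ is onto. (Alternatively, surjectivity can be quoted from the computation of the Witt group of a conic due to Parimala~\cite{Pari} and Pfister~\cite{Pf}.)

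The delicate step is this last one: it rests on Pumpl\"un's structural description of $W(X)$, and one must check that its $\VR_X$-valued version applies — in particular that the transfer $\tr_{\ell/k}$ along the \'etale double cover $X_\ell\to X$ sends $\VR_{X_\ell}$-valued spaces to $\VR_X$-valued spaces with no residual twist, so that the self-dual invertible sheaves on $X_\ell$ are genuinely exhausted by $\VR_{X_\ell}$ equipped with rank-one forms over $\ell$. The first two exactness statements, by contrast, are formal consequences of Lemma~\ref{lem:quat}, the purity sequence~\eqref{eq:WC}, and the classical kernel computation for function fields of conics.
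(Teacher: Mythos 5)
Your treatment of exactness at $W(D)$ and at $W(k)$ is correct and coincides with the paper's own argument: injectivity of $s_D$ is Lemma~\ref{lem:quat}, and the chain $\ker(\ext_X)=\ker(\ext_F)=\image(n_D)=\image(s_D)$, obtained from purity, the classical kernel computation for function fields of conics, and Lemma~\ref{lem:quat}, is exactly what the paper does. The issue is the surjectivity of $\ext_X$, which the paper simply quotes from Pumpl\"un (\cite[Section~5]{Pump}; see also \cite[Proposition~2.1]{PSS}), and for which your primary argument rests on a false premise. The ``untwisted companion'' of \cite[Theorem~4.3]{Pump2} that you invoke --- that $W(X)$ is generated by the classes of transfers $\tr_{\ell/k}(\N)$ with $\N$ a self-dual invertible $\VR_{X_\ell}$-module --- does not hold. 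Take $k=\mathbb{R}$ and $D=(-1,-1)_{\mathbb{R}}$, so that $\ell\simeq\mathbb{C}$ for every maximal subfield. By your own projection-formula computation, $\tr_{\ell/k}(\VR_{X_\ell},\qf u)\simeq\ext_X(\tr_{\ell/k}\qf u)$, and the image of the Scharlau transfer $W(\mathbb{C})\to W(\mathbb{R})$ is the $2$-torsion of $W(\mathbb{R})\simeq\mathbb{Z}$, hence zero; so every one of your proposed generators vanishes in $W(X)$, whereas $\ext_X\qf 1\neq 0$ there (indeed $W(X)\simeq W(\mathbb{R})/\image(s_D)\simeq\mathbb{Z}/4\mathbb{Z}$).

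The reason the argument of Theorem~\ref{thm:M} does not transport to the trivial twist is structural. For the $\LL_\sigma$-valued duality no invertible $\VR_X$-module is self-dual ($\LL_\sigma$ generates $\Pic(X)$ and is not a square), so every indecomposable self-dual summand has rank~$2$ and is a transfer from a splitting field; that is what makes \cite[Theorem~4.3]{Pump2} true and the surjectivity argument in Theorem~\ref{thm:M} work. For the $\VR_X$-valued duality, $\VR_X$ itself is self-dual, the rank-one spaces $(\VR_X,\qf c)=\ext_X(\qf c)$ must be added to the list of generators, and the rank-two modules $\tr_{\ell/k}(\VR_{X_\ell})\simeq\VR_X^{\oplus 2}$ are in fact decomposable, so they contribute nothing new. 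Once the rank-one generators are included the generation statement becomes correct, and surjectivity of $\ext_X$ is then immediate --- this is in substance Pumpl\"un's proof. Your parenthetical fallback (quoting Parimala, Pfister, or Pumpl\"un for surjectivity) is therefore the right move and matches the paper; but as written, your main route to surjectivity has a genuine gap.
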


\begin{proof}
  The scalar extension map $\ext_X\colon W(k)\to W(X)$ is known to be
  surjective; see \cite[Section~5]{Pump}. (Pumpl\"un's general result
  holds for arbitrary Severi--Brauer varieties. The case of conics is
  simpler;  see \cite[Proposition~5.3]{Pump} or \cite[Proposition~2.1]{PSS}.) Since
  restriction to the generic point 
  is an injective map $W(X)\to W(F)$ (see~\eqref{eq:WC}), the
  kernel of $\ext_X$ is also the kernel of the scalar extension map
  $W(k)\to W(F)$. The latter is the ideal generated by the norm form
  of $D$, see \cite[Corollary~X.4.28]{Lam}, which by Lemma~\ref{lem:quat}
  can also be described as the image of the injective map $s_D$. The
  proposition follows.
\end{proof}

Note that the exactness of \eqref{eq:exseq2} at $W(k)$ and $W(F)$ has
already been observed by Pfister \cite[Theorem~4]{Pf}. The rest of this
section deals with the last two terms of this sequence.

\subsection{Choice of uniformizers}
\label{subsec:unif}
Proposition~\ref{prop:coherent2} shows that for
Theorem~\ref{thm:exseq2} the coherent choice of uniformizers and
linear functionals is irrelevant. We make a specific choice as
follows. At $\infty$, we choose $x^{-1}$ as a uniformizer. To choose
uniformizers at the points $\pspot\neq\infty$, recall
from \cite[Proposition~1]{Pf} or \cite[Lemma~A.9]{MT} that
the affine ring~$\VR_\af$ is a principal ideal domain. Therefore, for each $\pspot\in
X^{(1)}_\af$, we may pick an irreducible element $\pi_\pspot\in
\VR_\af$ generating the prime ideal
$\VR_\af\cap\mathfrak{m}_\pspot$. The divisor of $\pi_\pspot$ 
is thus $\pspot+v_\infty(\pi_\pspot)\infty$; hence
\begin{equation}
  \label{eq:degpi}
  \deg\pspot=-2v_\infty(\pi_\pspot)
\end{equation}
because the degree of every principal divisor is~$0$. The element
$\pi_\pspot$ is a uniformizer at $\pspot$, and the linear functional
$s_\pspot\colon \resatp\to k$ such that $\pi_\pspot$, $s_\pspot$ is
coherent is uniquely determined. Moreover, the inclusion
$\VR_\af\subset\VR_\pspot$ induces a canonical isomorphism
$\VR_\af/\pi_\pspot\VR_\af=\resatp$.

\subsection{Nullity}
We next show that the sequence in Theorem~\ref{thm:exseq2} is a zero
sequence. Since we already know that it is exact at $W(D)$, $W(k)$,  and
$W(F)$, it suffices to prove 
\[
\sum_\pspot
t_\pspot\left(\partial^2_\pspot(\qf f)\right)=0\quad\text{for all $f\in
F^\times$.} 
\]
We may assume $f\in\VR_\af$ is square-free; hence
$f=c\pi_1\cdots\pi_n$
for some $c\in k^\times$ and some pairwise
distinct irreducible elements $\pi_1$, \ldots, $\pi_n$ of $\VR_\af$
selected in Section~\ref{subsec:unif}. Since $t_\pspot$ and
$\partial^2_\pspot$ are $W(k)$-linear, we may moreover assume
$c=1$. Thus, for the rest of this subsection we fix
\[
f=\pi_1\cdots\pi_n\in\VR_\af.
\]
We let $\pspot_1$, \ldots, $\pspot_n\in X^{(1)}_\af$ be the closed
points corresponding to $\pi_1$, \ldots, $\pi_n$.

The primary decomposition of the $\VR_\af$-module
$f^{-1}\VR_\af/\VR_\af$ is
\begin{equation}
  \label{eq:primdec}
  f^{-1}\VR_\af/\VR_\af = (\pi_1^{-1}\VR_\af/\VR_\af)
  \oplus\cdots\oplus (\pi_n^{-1}\VR_\af/\VR_\af).
\end{equation}
Multiplication by $f$ defines an isomorphism $f^{-1}\VR_\af/\VR_\af
\simeq \VR_\af/f\VR_\af$; likewise, multiplication by $\pi_\alpha$
defines an isomorphism $\pi_\alpha^{-1}\VR_\af/\VR_\af\simeq
\VR_\af/\pi_\alpha\VR_\af = k(\pspot_\alpha)$ for $\alpha=1$, \ldots,
$n$. Hence we have an isomorphism of $\VR_\af$-modules $\Phi$ which
makes the following diagram commute:
\[
\xymatrix{f^{-1}\VR_\af/\VR_\af \ar[r]^-{\sim} \ar[d]_{f\cdot} &
(\pi_1^{-1}\VR_\af/\VR_\af)
\oplus\cdots\oplus (\pi_n^{-1}\VR_\af/\VR_\af)
\ar[d]^{(\pi_1\cdot)\oplus\cdots\oplus(\pi_n\cdot)} \\
\VR_\af/f\VR_\af \ar[r]^-{\Phi}&
k(\pspot_1)\oplus\cdots\oplus k(\pspot_n)\rlap{.}
}
\]
In contrast with the isomorphism provided by the Chinese remainder
theorem, the map $\Phi$ is \emph{not} a ring homomorphism; it is
readily verified that for $f_1$, \ldots, $f_n\in\VR_\af$,
\begin{equation}
  \label{eq:Phinv}
  \Phi^{-1}\left(f_1(\pspot_1),\ldots,
  f_n(\pspot_n)\right) = 
  (f_1\pi_2\cdots\pi_n)+\cdots+(\pi_1\cdots\pi_{n-1}f_n) +f\VR_\af.
\end{equation}

Recall that $v_\infty$ denotes the (normalized) valuation at $\infty$
of $F$. We have $v_\infty(g)\leq0$ for all $g\in\VR_\af$.

\begin{lem}
  \label{lem:S}
  Every element in $\VR_\af\,/f\VR_\af$ can be represented in the form
  $g+f\VR_\af$ with $g\in\VR_\af$ such that $v_\infty(g)\geq
  v_\infty(f)$. There is a well-defined $k$-linear map
  \[
  S\colon \VR_\af\,/f\VR_\af\lra k 
  \]
  such that 
  \[
  S(g+f\VR_\af) =
    -\omega_\infty\left(\frac gf(\infty)\right)\quad\text{for }
      g\in\VR_\af \text{ with } 
    v_\infty(g)\geq v_\infty(f).
  \]
  The following diagram, where $s_{\pspot_1}$, \ldots, $s_{\pspot_n}$
  are the linear functionals coherently chosen with the uniformizers
  $\pi_1$,~\ldots, $\pi_n$, commutes:
  \begin{equation}
    \label{eq:triangleS}
    \xymatrix{\VR_\af\,/f\VR_\af \ar[rr]^-{\Phi}\ar[dr]_{S} & &
      k(\pspot_1)\oplus\cdots\oplus k(\pspot_n)
      \ar[dl]^{s_{\pspot_1}+\cdots +s_{\pspot_n}}\\
    &k\rlap{.}&
    }
  \end{equation}
\end{lem}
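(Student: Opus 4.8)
The plan is to verify each of the three assertions in turn, with the bulk of the work going into the commutativity of the triangle~\eqref{eq:triangleS}.

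\textbf{First assertion (existence of a suitable representative).} Given $h+f\VR_\af$ with $h\in\VR_\af$, I would note that $v_\infty(f)=v_\infty(\pi_1)+\cdots+v_\infty(\pi_n)<0$, so the subset of $\VR_\af$ consisting of elements $g$ with $v_\infty(g)\ge v_\infty(f)$ contains $f\VR_\af$ and is a $k$-subspace. To produce a representative with this property I would invoke Riemann--Roch on the genus-zero function field $F$ exactly as in the proof of Proposition~\ref{prop:diffdesc}: the space $\mathfrak{L}\bigl(v_\infty(f)\,\infty\bigr)$ of functions with poles only at $\infty$ of order at most $-v_\infty(f)$ has dimension $1-v_\infty(f)=1+\tfrac12\sum\deg\pspot_\alpha$ (using~\eqref{eq:degpi}), and the map sending such an element to its class in $\VR_\af/f\VR_\af$ has kernel exactly the constants $k$; a dimension count against $\dim_k(\VR_\af/f\VR_\af)=\sum\deg\pspot_\alpha$ shows it is onto. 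This gives the desired representatives.

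\textbf{Second assertion (well-definedness of $S$).} I would check that if $g\in\VR_\af$ satisfies $v_\infty(g)\ge v_\infty(f)$ and $g\in f\VR_\af$, then $\omega_\infty\bigl(\tfrac gf(\infty)\bigr)=0$; but then $g/f\in\VR_\af\subseteq\VR_\infty$, and in fact $g/f$ reduces to an element of $k$ modulo $\mathfrak m_\infty$ (again because the only functions regular everywhere are constants), so $\tfrac gf(\infty)\in k$ and $\omega_\infty$ kills it by~\eqref{eq:omegadef}. Linearity of $g\mapsto -\omega_\infty\bigl(\tfrac gf(\infty)\bigr)$ in $g$ is immediate, so $S$ is a well-defined $k$-linear functional.

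\textbf{Third assertion (commutativity of~\eqref{eq:triangleS}).} This is the main obstacle. Starting from a tuple $(f_1(\pspot_1),\ldots,f_n(\pspot_n))$, formula~\eqref{eq:Phinv} gives an explicit preimage $g=\sum_\alpha\bigl(\prod_{\beta\neq\alpha}\pi_\beta\bigr)f_\alpha$ modulo $f\VR_\af$; one must first replace it by a representative with $v_\infty(g)\ge v_\infty(f)$, which is possible by the first assertion, and I would argue the value of $S$ is unchanged along the way. Then $S(g+f\VR_\af)=-\omega_\infty\bigl(\tfrac gf(\infty)\bigr)$ and $\tfrac gf=\sum_\alpha f_\alpha/\pi_\alpha$, so it suffices to prove, for each $\alpha$, that $-\omega_\infty\bigl(\tfrac{f_\alpha}{\pi_\alpha}(\infty)\bigr)=s_{\pspot_\alpha}\bigl(f_\alpha(\pspot_\alpha)\bigr)$, after choosing the representative $f_\alpha$ so that $f_\alpha/\pi_\alpha$ has poles only at $\pspot_\alpha$ and $\infty$. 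This is exactly the coherence relation: by Definition~\ref{def:coherent}, $s_{\pspot_\alpha}\bigl(f_\alpha(\pspot_\alpha)\bigr)=s_{\pspot_\alpha}\bigl(\mu_{\pi_\alpha}(f_\alpha/\pi_\alpha+\VR_{\pspot_\alpha})\bigr)=\omega_{\pspot_\alpha}(f_\alpha/\pi_\alpha)$, and Proposition~\ref{prop:diffdesc} applied to the function $f_\alpha/\pi_\alpha\in\mathfrak m_{\pspot_\alpha}^{-1}$ (which is regular away from $\pspot_\alpha$) gives $\omega_{\pspot_\alpha}(f_\alpha/\pi_\alpha)=-\omega_\infty\bigl((f_\alpha/\pi_\alpha)(\infty)\bigr)$. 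Summing over $\alpha$ and using linearity of $\omega_\infty$ yields the claim. The delicate points are keeping track of which representatives are used at each stage and checking that the pole-order conditions needed to apply Proposition~\ref{prop:diffdesc} can be met simultaneously; both are handled by the Riemann--Roch argument of the first assertion applied one $\pspot_\alpha$ at a time.
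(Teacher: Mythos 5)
Your overall route is the same as the paper's: both the surjectivity statement and the commutativity of the triangle are ultimately reduced to Proposition~\ref{prop:diffdesc} together with the coherence condition of Definition~\ref{def:coherent}, and your treatment of well-definedness of $S$ (the difference of two admissible representatives, divided by $f$, is regular everywhere, hence a constant killed by $\omega_\infty$) is exactly the paper's. Your verification of the triangle, via the explicit formula~\eqref{eq:Phinv} and the identity $\omega_{\pspot_\alpha}(f_\alpha/\pi_\alpha)=-\omega_\infty\bigl((f_\alpha/\pi_\alpha)(\infty)\bigr)$, is a spelled-out version of the paper's diagram; and your closing remark is the right fix for the representative issue: if each $f_\alpha$ is chosen with $v_\infty(f_\alpha/\pi_\alpha)\geq 0$ (possible by Proposition~\ref{prop:diffdesc} applied at $\pspot_\alpha$), then the representative $g$ from~\eqref{eq:Phinv} already satisfies $v_\infty(g)\geq v_\infty(f)$ and no replacement is needed.

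There is, however, an arithmetic slip in your first step that, as written, breaks the dimension count. The point $\infty$ has degree $2$, so the divisor $-v_\infty(f)\cdot\infty$ has degree $-2v_\infty(f)=\sum_\alpha\deg\pspot_\alpha$ by~\eqref{eq:degpi}, and Riemann--Roch on the genus-zero field $F$ gives
\[
\dim_k\mathfrak{L}\bigl(v_\infty(f)\,\infty\bigr)=1-2v_\infty(f)=1+\sum_{\alpha}\deg\pspot_\alpha,
\]
not $1-v_\infty(f)=1+\tfrac12\sum_\alpha\deg\pspot_\alpha$ as you claim. With your figure, the image of your map would have dimension only $\tfrac12\sum_\alpha\deg\pspot_\alpha$, which does not match $\dim_k(\VR_\af/f\VR_\af)=\sum_\alpha\deg\pspot_\alpha$, so surjectivity would not follow; with the corrected dimension the count closes (the kernel is the line $fk$). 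This is a repairable slip rather than a wrong idea. Note that the paper avoids the global count altogether: it obtains the admissible representatives directly from the primary decomposition~\eqref{eq:primdec}, writing each class as $\bigl(\sum_\alpha fg_\alpha\bigr)+f\VR_\af$ with $g_\alpha\in\pi_\alpha^{-1}\VR_\af$, $v_\infty(g_\alpha)\geq 0$ supplied pointwise by Proposition~\ref{prop:diffdesc}, which is slightly more economical and also furnishes exactly the representatives needed for the triangle.
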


\begin{proof}
  Proposition~\ref{prop:diffdesc} shows that every element in
  $\pi_\alpha^{-1}\VR_\af/\VR_\af$ can be represented in the form
  $g_\alpha+\VR_\af$ for some $g_\alpha\in\pi_\alpha^{-1}\VR_\af$ such
  that $v_\infty(g_\alpha)\geq0$. Therefore, by~\eqref{eq:primdec},
  every element in $\VR_\af/f\VR_\af$ has the form $(\sum_\alpha
  fg_\alpha)+f\VR_\af$, where $fg_\alpha\in\VR_\af$ and
  $v_\infty(fg_\alpha) \geq v_\infty(f)$ for all $\alpha=1$, \ldots,
  $n$. This proves the first statement.

  Note that the representation as $g+f\VR_\af$ with $g\in\VR_\af$ such
  that $v_\infty(g)\geq v_\infty(f)$ is not unique,  but if $g_1$,
  $g_2\in\VR_\af$ are such that $v_\infty(g_1)$, $v_\infty(g_2)\geq
  v_\infty(f)$ and $g_1+f\VR_\af=g_2+f\VR_\af$, then
  $f^{-1}g_1-f^{-1}g_2\in \VR_\af$ and
  $v_\infty(f^{-1}g_1-f^{-1}g_2)\geq0$, so $f^{-1}g_1-f^{-1}g_2\in
  k$. Since $\omega_\infty$ vanishes on $k$, it follows that
  $\omega_\infty(\frac{g_1}f(\infty)) =
  \omega_\infty(\frac{g_2}f(\infty))$; hence the map $S$ is
  well defined. 

  Since the choice of the functionals $s_{\pspot_1}$, \ldots,
  $s_{\pspot_n}$ is coherent with the choice of uniformizers $\pi_1$,
  \ldots, $\pi_n$, commutativity of~\eqref{eq:triangleS} amounts to
the commutativity of the following diagram:
  \[
  \xymatrix{
  f^{-1}\VR_\af\,/\VR_\af \ar[r]^-{\lowsim} \ar[d] & \bigoplus_{\alpha=1}^n
  \left(\pi_\alpha^{-1}\VR_\af\,/\VR_\af\right) \ar[d]^{\sum_\alpha
    \omega_{\pspot_\alpha}} \\
  \VR_\af\,/f\VR_\af\ar[r]^-{S}&k\rlap{.}
  }
  \]
  This readily follows from the description of the maps
  $\omega_{\pspot_\alpha}$ in Proposition~\ref{prop:diffdesc}.
\end{proof}

Tensoring $\Phi$ with the identity on the $\VR_\af$-module
$\overline{\T_\af}$, 
we obtain an isomorphism of right $\D_\af$-modules
\[
\Phi_\T\colon \overline{\T_\af}\,/f\overline{\T_\af} \overset{\lowsim}{\lra}
\overline{T(\pspot_1)}\oplus\cdots\oplus\overline{T(\pspot_n)}.
\]
On the other hand, tensoring $S$ with the identity on $D$, we obtain a
map
\[
S_D\colon \D_\af\,/f\D_\af \lra D.
\]
Recall from Proposition~\ref{prop:afsec} that
$\overline{\T_\af}=e\D_\af$. Define 
a skew-hermitian form
\[
H\colon \left(\overline{\T_\af}\,/f\overline{\T_\af}\right)\times
  \left(\overline{\T_\af}\,/f\overline{\T_\af}\right) \lra \D_\af\,/f\D_\af 
\]
by
\[
H\left(e\xi+f\overline{\T_\af}, e\eta+f\overline{\T_\af}\right) = \overline\xi
e\eta +f\D_\af 
\quad\text{for }\xi, \eta\in\D_\af.
\]

\begin{prop}
  \label{prop:isoPhi}
  The map $\Phi_\T$ is an isometry of skew-hermitian $D$-modules
  \[
  (S_D)_*(H)\simeq
  t_{\pspot_1}(\qf{\pi_2\cdots\pi_n(\pspot_1)})
  \perp\cdots\perp
  t_{\pspot_n}(\qf{\pi_1\cdots\pi_{n-1}(\pspot_n)}).
  \]
\end{prop}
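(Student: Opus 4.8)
The plan is to transport both the form $H$ and the linear functional $S_D$ through the isomorphism $\Phi$, using its explicit description~\eqref{eq:Phinv}, and to recognise the resulting orthogonal decomposition as the sum of the transfer forms computed in Proposition~\ref{prop:explicitransfer}.

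The starting observation is that $\Phi$ differs from the Chinese remainder isomorphism only by a diagonal scaling. Write $\psi\colon\VR_\af/f\VR_\af\to\bigoplus_{\alpha=1}^n k(\pspot_\alpha)$ for the Chinese remainder ring isomorphism $g+f\VR_\af\mapsto\bigl(g(\pspot_1),\dots,g(\pspot_n)\bigr)$, and set $g_\alpha=\pi_1\cdots\widehat{\pi_\alpha}\cdots\pi_n(\pspot_\alpha)\in k(\pspot_\alpha)^\times$, so that~\eqref{eq:Phinv} says precisely $\Phi=\operatorname{diag}(g_1^{-1},\dots,g_n^{-1})\circ\psi$. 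Set $\psi_D=\psi\otimes\Id_D\colon\D_\af/f\D_\af\to\bigoplus_\alpha D(\pspot_\alpha)$ and $\Phi_D=\Phi\otimes\Id_D$, and let $\psi_\T\colon\overline{\T_\af}/f\overline{\T_\af}\to\bigoplus_\alpha\overline{T(\pspot_\alpha)}$ be obtained from $\psi$ by the same construction by which $\Phi_\T$ is obtained from $\Phi$. Then $\psi_D$ is a $k$-algebra isomorphism (it is the Chinese remainder decomposition of $D\otimes_k(\VR_\af/f\VR_\af)$), hence multiplicative; $\psi_\T$ is semilinear over $\psi_D$, i.e.\ $\psi_\T(xa)=\psi_\T(x)\psi_D(a)$; and $\Phi_D=\operatorname{diag}(g_\alpha^{-1})\circ\psi_D$, $\Phi_\T=\operatorname{diag}(g_\alpha^{-1})\circ\psi_\T$. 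Moreover $S_D=S\otimes\Id_D$ is $D$-linear for both module structures and commutes with conjugation, just as the involution trace $s_{D(\pspot)}$ of Section~\ref{subsec:transfer}, so $(S_D)_*(H)$ is a well-defined skew-hermitian form over $D$; and the commutative triangle~\eqref{eq:triangleS} of Lemma~\ref{lem:S}, tensored with $D$, gives $S_D=\bigl(\sum_\alpha s_{D(\pspot_\alpha)}\bigr)\circ\Phi_D$, where $s_{D(\pspot_\alpha)}=\Id_D\otimes s_{\pspot_\alpha}$.

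The core of the argument is then a short computation on each factor. Take $v=e\xi+f\overline{\T_\af}$ and $w=e\eta+f\overline{\T_\af}$ with $\xi$,~$\eta\in\D_\af$, and write $\xi(\pspot_\alpha)$,~$\eta(\pspot_\alpha)\in D(\pspot_\alpha)$ for their reductions. Since $\overline{\T_\af}=e\D_\af$ and reduction at $\pspot_\alpha$ is a ring homomorphism commuting with conjugation, one gets $\psi_D\bigl(H(v,w)\bigr)_\alpha=\overline{\xi(\pspot_\alpha)}\,e_{\pspot_\alpha}\,\eta(\pspot_\alpha)$ and, from semilinearity of $\psi_\T$, $\psi_\T(v)_\alpha=e_{\pspot_\alpha}\,\xi(\pspot_\alpha)$; hence $\Phi_\T(v)_\alpha=e_{\pspot_\alpha}\bigl(g_\alpha^{-1}\xi(\pspot_\alpha)\bigr)$, likewise for $w$, and $\Phi_D\bigl(H(v,w)\bigr)_\alpha=g_\alpha^{-1}\,\overline{\xi(\pspot_\alpha)}\,e_{\pspot_\alpha}\,\eta(\pspot_\alpha)$. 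Feeding $\Phi_\T(v)_\alpha$ and $\Phi_\T(w)_\alpha$ into the form $h_{g_\alpha}$ of Proposition~\ref{prop:explicitransfer}, and using that $g_\alpha^{-1}\in k(\pspot_\alpha)$ is central in $D(\pspot_\alpha)$ and fixed by conjugation, the scalar $g_\alpha$ built into $h_{g_\alpha}$ cancels one of the two copies of $g_\alpha^{-1}$ coming from its two slots, and what is left is exactly $g_\alpha^{-1}\,\overline{\xi(\pspot_\alpha)}\,e_{\pspot_\alpha}\,\eta(\pspot_\alpha)$. In other words $\Phi_D\bigl(H(v,w)\bigr)_\alpha=h_{g_\alpha}\bigl(\Phi_\T(v)_\alpha,\Phi_\T(w)_\alpha\bigr)$ for every $\alpha$ and all $v$,~$w$. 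Composing with $\sum_\alpha s_{D(\pspot_\alpha)}$ and invoking $S_D=\bigl(\sum_\alpha s_{D(\pspot_\alpha)}\bigr)\circ\Phi_D$ shows that $\Phi_\T$ carries $(S_D)_*(H)$ isometrically onto $(s_{D(\pspot_1)})_*(h_{g_1})\perp\cdots\perp(s_{D(\pspot_n)})_*(h_{g_n})$; since every $\pspot_\alpha$ lies in $X^{(1)}_\af$, Proposition~\ref{prop:explicitransfer} identifies the $\alpha$-th summand with $t_{\pspot_\alpha}\bigl(\qf{g_\alpha}\bigr)=t_{\pspot_\alpha}\bigl(\qf{\pi_1\cdots\widehat{\pi_\alpha}\cdots\pi_n(\pspot_\alpha)}\bigr)$, which is the asserted isometry.

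The only delicate point — and the one I expect to be the main obstacle — is the bookkeeping of the scalars $g_\alpha$: the discrepancy between $\Phi$ and the Chinese remainder isomorphism $\psi$ is a diagonal twist by $(g_\alpha^{-1})_\alpha$, and one has to see that it is exactly absorbed by the coefficients $\pi_1\cdots\widehat{\pi_\alpha}\cdots\pi_n(\pspot_\alpha)$ appearing in the target forms. This works because $H$ is bilinear: the twist scales each of the two arguments of $h_{g_\alpha}$ by $g_\alpha^{-1}$, hence scales the value by $g_\alpha^{-2}$, and since $h_{g_\alpha}$ already carries the factor $g_\alpha$, the net effect on values is multiplication by $g_\alpha^{-1}$ — precisely the twist that $\Phi_D$ produces on $\D_\af/f\D_\af$. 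Everything else is a routine unwinding of the tensor-product identifications and of the reduction maps $\VR_\af\to k(\pspot_\alpha)$.
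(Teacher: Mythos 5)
Your proof is correct and fills in precisely the ``straightforward calculation, using~\eqref{eq:Phinv} and Lemma~\ref{lem:S}'' that the paper's one-line proof invokes: the factorization $\Phi=\operatorname{diag}(g_1^{-1},\dots,g_n^{-1})\circ\psi$ through the Chinese remainder isomorphism is exactly the right way to organize the bookkeeping, and the cancellation of the scalars $g_\alpha$ between the diagonal twist and the coefficient built into $h_{g_\alpha}$ works as you describe. No gaps.
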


\begin{proof}
  This follows by a straightforward calculation,
  using~\eqref{eq:Phinv} and Lemma~\ref{lem:S}.
\end{proof}

Next, we determine a base of $\overline{\T_\af}\,/f\overline{\T_\af}$ as
a right $D$-vector space.

\begin{lem}
  \label{lem:Qbase}
  If\, $v_\infty(f)=-n$, then
  $(ex^\alpha+f\overline{\T_\af})_{\alpha=0}^{n-1}$ is 
  a $D$-base of\, $\overline{\T_\af}\,/f\overline{\T_\af}$.
\end{lem}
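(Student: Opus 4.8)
The plan is to realize $\overline{\T_\af}/f\overline{\T_\af}$ as a quotient of the polynomial ring $D[x]:=D\otimes_k k[x]$ by a principal right ideal, and then read off the basis by division with remainder in $D[x]$.

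The key structural step is to show that $\overline{\T_\af}=e\,D[x]$ and that $P\mapsto eP$ is an isomorphism of right $D[x]$-modules $D[x]\xrightarrow{\lowsim}\overline{\T_\af}$. By Proposition~\ref{prop:afsec} one has $\overline{\T_\af}=e\D_\af$; since $\VR_\af=k[x]\oplus k[x]y$ we get $\D_\af=D[x]\oplus D[x]y$, so it is enough to see that $e\D_\af$ is stable under right multiplication by $y$. A direct computation with $e=bi+axj+yij$, using only $i^2=a$, $j^2=b$, $ij=-ji$ and $y^2=ax^2+b$, gives the identity $ey=e(j-xi)$ with $j-xi\in D[x]$; as $eD[x]$ is plainly stable under right multiplication by $D$ and by $k[x]$ as well, it is a right $\D_\af$-submodule of $\overline{\T_\af}$ containing the generator $e$, hence equals $\overline{\T_\af}=e\D_\af$. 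Injectivity of $P\mapsto eP$ amounts to the vanishing of $\overline{T}\cap D[x]$, where $\overline{T}=eD_F$ is the $2$-dimensional right ideal of $D_F$, equal to the right annihilator of $e$ (because $e^2=0$); since $D\otimes_k k(x)$ is a division algebra — the norm form of $D$ stays anisotropic over $k(x)$ — the proper right ideal $\overline T$ meets $D\otimes_k k(x)$, a fortiori $D[x]$, trivially.

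Granting this, I would transport $f\overline{\T_\af}$ through the isomorphism. As $f\in\VR_\af$ is central in $\D_\af$, $f\overline{\T_\af}=f\cdot eD[x]=e(fD[x])$, and using $\overline{T}\cap\D_\af=\overline{\T_\af}=eD[x]$ one checks that, for $P\in D[x]$, one has $eP\in f\overline{\T_\af}$ if and only if $P\in fD[x]+eD[x]$. Thus $f\overline{\T_\af}$ corresponds to the right ideal $J:=D[x]\cap(fD[x]+eD[x])$ of $D[x]$, and there results an isomorphism of right $D$-modules $\overline{\T_\af}/f\overline{\T_\af}\xrightarrow{\lowsim}D[x]/J$ carrying $ex^\alpha+f\overline{\T_\af}$ to $x^\alpha+J$. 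Now choose $g\in J$ of minimal degree; its leading coefficient lies in $D^\times$, so the usual division argument shows $J=gD[x]$ and that $(x^\alpha+gD[x])_{\alpha=0}^{d-1}$ is a right $D$-basis of $D[x]/gD[x]$, with $d=\deg g$; hence $\dim_k\bigl(\overline{\T_\af}/f\overline{\T_\af}\bigr)=4d$. On the other hand $\overline{\T_\af}=e\D_\af$ is locally free of rank~$2$ over $\VR_\af$ (the sheaf $\overline\T$ is isomorphic as an $\VR_X$-module to $\T$, which has rank~$2$ by Proposition~\ref{prop:newT}\eqref{p:nT-a}), so $\dim_k\bigl(\overline{\T_\af}/f\overline{\T_\af}\bigr)=2\dim_k(\VR_\af/f\VR_\af)$; and since $f=\pi_1\cdots\pi_n$ is squarefree, $\VR_\af/f\VR_\af\cong\prod_{\alpha=1}^n\VR_\af/\pi_\alpha\VR_\af\cong\prod_{\alpha=1}^n k(\pspot_\alpha)$, whose $k$-dimension is $\sum_\alpha\deg\pspot_\alpha=-2v_\infty(f)=2n$ by~\eqref{eq:degpi}. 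Comparing the two expressions forces $d=n$, and therefore $(ex^\alpha+f\overline{\T_\af})_{\alpha=0}^{n-1}$ is a $D$-basis of $\overline{\T_\af}/f\overline{\T_\af}$.

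The main obstacle is the structural identity $\overline{\T_\af}=eD[x]$ — the fact that the cyclic right $\D_\af$-module $e\D_\af$ is already free of rank one over the smaller ring $D\otimes_k k[x]$, on the generator $e$; everything after that is bookkeeping, and the hypothesis $v_\infty(f)=-n$ enters only at the very end, to pin down $d=n$.
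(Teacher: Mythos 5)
Your argument is correct, and it reaches the conclusion by a genuinely different mechanism than the paper's. The paper first computes $\dim_D\bigl(\overline{\T_\af}\,/f\overline{\T_\af}\bigr)=n$ via the decomposition $\Phi_\T$ into fibers, and then proves spanning head-on: Lemma~\ref{lem:S} (hence ultimately Riemann--Roch) shows that $\VR_\af/f\VR_\af$ is spanned by the images of $x^0,\dots,x^n$ and $y,xy,\dots,x^{n-1}y$, the relation $ey=ej-eix$ removes the $y$-terms, and an explicit leading-coefficient computation with $f=c_1x^n+c_2x^{n-1}y+f_0$ (invertibility of the quaternion $c_1-ic_2$) eliminates $ex^n$. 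You instead upgrade the same relation $ey=e(j-ix)$ into the structural statement that $\overline{\T_\af}=eD[x]$ is free of rank one over $D[x]=D\otimes_kk[x]$ (injectivity of $P\mapsto eP$ because $D\otimes_kk(x)$ is still a division algebra), identify the quotient with $D[x]/gD[x]$ for a single right generator $g$, and read off the basis by Euclidean division over the division ring $D$; the hypothesis $v_\infty(f)=-n$ then enters only through the dimension count $\dim_k\bigl(\overline{\T_\af}\,/f\overline{\T_\af}\bigr)=2\dim_k(\VR_\af/f\VR_\af)=4n$, which matches the paper's count via~\eqref{eq:degpi}. Your route avoids both the Riemann--Roch input and the leading-coefficient manipulation, and it gives slightly more (the quotient is cyclic as a right $D[x]$-module); the paper's route has the advantage of reusing machinery (Lemma~\ref{lem:S}, the map $\Phi_\T$) that is needed anyway for Proposition~\ref{prop:isoPhi} and Proposition~\ref{prop:null}. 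Two small points to tidy up, neither of which is a real gap: you should note that $J\neq0$ before choosing $g$ of minimal degree (immediate, since your second dimension count shows $D[x]/J$ is finite-dimensional over $k$), and the identification of $\overline{T}=eD_F$ with the full right annihilator of $e$ used for injectivity deserves a word (both are two-dimensional right ideals of the split algebra $D_F$, and $e^2=0$ gives one inclusion).
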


\begin{proof}
  For every $\pspot\in X^{(1)}$, the fiber $\overline\Tatp$ is a
  $2$-dimensional 
  right ideal of $\Datp$; hence $\dim_{\resatp}\overline\Tatp=2$,  and
  therefore $\dim_k\overline\Tatp=2\deg\pspot$. By~\eqref{eq:degpi} it
  follows that $\dim_k\overline\Tatp=-4v_\infty(\pi_\pspot)$ for all
  $\pspot\in X_\af^{(1)}$. Since $\Phi_\T$ is an isomorphism of
  $k$-vector spaces, we get
  \[
  \dim_k\left(\overline\T_\af\,/f\overline\T_\af\right)=-4\left(v_\infty(\pi_1)+\cdots
  +v_\infty(\pi_n)\right)= -4v_\infty(f),
  \]
  hence $\dim_D(\overline{\T_\af}\,/f\overline{\T_\af})=n$. Therefore,
  to prove the lemma, it suffices to show that the sequence
  $(ex^\alpha+f\overline{\T_\af})_{\alpha=0}^{n-1}$ spans
  $\overline{\T_\af}\,/f\overline{\T_\af}$.

  From the description of $\VR_\af$ as $k[x,y]$ with $y^2=ax^2+b$, we
  know that $(x^\alpha,x^\alpha y)_{\alpha=0}^\infty$ is a $k$-base of~$\VR_\af$.
  As $v_\infty(x^\alpha)=v_\infty(x^{\alpha-1}y)=-\alpha$,
  the elements $g\in\VR_\af$ such that $v_\infty(g)\geq -n$ are linear
  combinations of $(x^\alpha)_{\alpha=0}^n$ and $(x^\beta
  y)_{\beta=0}^{n-1}$. Therefore, Lemma~\ref{lem:S} shows that
  $\VR_\af\,/f\VR_\af$ is $k$-spanned by the images of
  $(x^\alpha)_{\alpha=0}^n$ and $(x^\beta y)_{\beta=0}^{n-1}$. It
  follows that $\overline{\T_\af}\,/f\overline{\T_\af}$ is $D$-spanned
  by the images of 
  $(ex^\alpha)_{\alpha=0}^n$ and $(ex^\beta y)_{\beta=0}^{n-1}$. By
  multiplying~\eqref{eq:esq0} on the left by $(ij)^{-1}$, we get
 $je-xie+ye=0$; hence after conjugation
  \begin{equation}
    \label{eq:ey}
    ey=ej-eix\quad\text{in }\overline{\T_\af}.
  \end{equation}
  Therefore, the elements $ex^\beta y$ for $\beta=0$, \ldots, $n-1$ are in
  the $D$-span of $(ex^\alpha)_{\alpha=0}^n$. Thus, it only remains to
  see that the image of $ex^n$ in
  $\overline{\T_\af}\,/f\overline{\T_\af}$ lies in the 
  $D$-span of the image of $(ex^\alpha)_{\alpha=0}^{n-1}$.

  For this, note that since $v_\infty(f)=-n$, we have
  \[
  f=c_1x^n+c_2x^{n-1}y+f_0
  \]
  for some $f_0\in\VR_\af$ such that $v_\infty(f_0)\geq-n+1$ and some
  $c_1$, $c_2\in k$ not both~$0$, hence
  \[
  ec_1x^n+ef_0\equiv -ec_2x^{n-1}y\quad\bmod f\overline{\T_\af}.
  \]
  Comparing with~\eqref{eq:ey}, we obtain
  \[
  ec_1x^n+ef_0 \equiv -ejc_2x^{n-1}+eic_2x^n \quad\bmod f\overline{\T_\af},
  \]
  hence
  \[
  e(c_1-ic_2)x^n\equiv -ejc_2x^{n-1}-ef_0\quad\bmod f\overline{\T_\af}.
  \]
  Note that since $v_\infty(f_0)\geq-n+1$, the arguments above show
  that the image of $ef_0$ in $\overline{\T_\af}\,/f\overline{\T_\af}$
  lies in the $D$-span of 
  $(ex^\alpha)_{\alpha=0}^{n-1}$. Since the quaternion $c_1-ic_2$ is
  invertible in $D$, the proof is complete.
\end{proof}

We may now prove that the sequence in Theorem~\ref{thm:exseq2} is a
zero sequence.

\begin{prop}
  \label{prop:null}
  For $f$ as above, $\sum_{\pspot\in
    X^{(1)}}t_\pspot\left(\partial^2_\pspot(\qf f)\right)=0$.
\end{prop}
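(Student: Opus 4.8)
The plan is to realize $\sum_\pspot t_\pspot\bigl(\partial^2_\pspot(\qf f)\bigr)$ as the Witt class of a single skew-hermitian form over $D$ by means of Proposition~\ref{prop:isoPhi}, and then to evaluate that class with the $D$-basis of $\overline{\T_\af}/f\overline{\T_\af}$ provided by Lemma~\ref{lem:Qbase}. First I would record the residues. From the description of $\partial^2_\pspot$ on one-dimensional forms, $\partial^2_\pspot(\qf f)=0$ unless $\pspot\in\{\pspot_1,\dots,\pspot_n,\infty\}$; at $\pspot_\alpha$ the element $\pi_\alpha$ is a uniformizer while $\prod_{\beta\neq\alpha}\pi_\beta$ is a unit, so $\partial^2_{\pspot_\alpha}(\qf f)=\qf{\prod_{\beta\neq\alpha}\pi_\beta(\pspot_\alpha)}$; and at $\infty$, with the uniformizer $x^{-1}$ and $m:=-v_\infty(f)$, one has $\partial^2_\infty(\qf f)=0$ if $m$ is even and $\partial^2_\infty(\qf f)=\qf{(f/x^m)(\infty)}$ if $m$ is odd. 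Comparing with Proposition~\ref{prop:isoPhi} gives
\[
\sum_{\alpha=1}^{n} t_{\pspot_\alpha}\bigl(\partial^2_{\pspot_\alpha}(\qf f)\bigr)=\bigl[(S_D)_*(H)\bigr]\quad\text{in }W^-(D),
\]
so the assertion reduces to $\bigl[(S_D)_*(H)\bigr]+t_\infty\bigl(\partial^2_\infty(\qf f)\bigr)=0$.

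Next I would compute $[(S_D)_*(H)]$. The proof of Lemma~\ref{lem:Qbase} applies verbatim with $n$ replaced by $m=-v_\infty(f)$ and shows that $(ex^\alpha+f\overline{\T_\af})_{\alpha=0}^{m-1}$ is a $D$-basis of $\overline{\T_\af}/f\overline{\T_\af}$. Since $H(e\xi+f\overline{\T_\af},e\eta+f\overline{\T_\af})=\overline\xi e\eta+f\D_\af$ and $x$ is central with $\overline x=x$, the Gram matrix of $(S_D)_*(H)$ in this basis is the Hankel matrix $G_{\alpha\beta}=S_D(x^{\alpha+\beta}e)$, and expanding $e=bi+axj+y\,ij$ gives $S_D(x^ke)=bS(x^k)\,i+aS(x^{k+1})\,j+S(x^ky)\,ij$. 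By Lemma~\ref{lem:S} the functional $S$ kills the class of any $g\in\VR_\af$ with $v_\infty(g)>-m$, because then $(g/f)(\infty)=0$; hence $G_{\alpha\beta}=0$ for $\alpha+\beta\leq m-2$ and $G_{\alpha\beta}=d$ for $\alpha+\beta=m-1$, where
\[
d:=aS(x^m)\,j+S(x^{m-1}y)\,ij\in D^0=\Skew(\invo).
\]
Thus $G$ is anti-triangular with constant anti-diagonal $d$. If $m$ is even the span of the first $m/2$ basis vectors is a Lagrangian, so $[(S_D)_*(H)]=0$. If $m=2p+1$ is odd then $d\neq0$, since otherwise the span of the first $p+1$ basis vectors would be totally isotropic, contradicting the fact that $(S_D)_*(H)$ is nonsingular of $D$-dimension $m$ (by Proposition~\ref{prop:isoPhi} it is an orthogonal sum of transfers of nonsingular one-dimensional forms); as the reduced norm is anisotropic on the division algebra $D$, $d$ is invertible, and sublagrangian reduction on the span of the first $p$ basis vectors gives $[(S_D)_*(H)]=\qf d$ (the surviving summand being the middle entry $G_{pp}=d$).

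Finally I would identify the two terms. If $m$ is even there is nothing left to prove. If $m$ is odd, $\partial^2_\infty(\qf f)=\qf\lambda$ with $\lambda=(f/x^m)(\infty)\in\resatinf^\times$; since $\overline\Tatinf$ is $D$-free of rank one on $e_\infty$, Proposition~\ref{prop:explicitransfer} (with the uniformizer $x^{-1}$) shows that $t_\infty(\qf\lambda)=\qf{s_{D_\infty}(\lambda e_\infty)}$. Writing $e_\infty=aj+\iota\,ij$ with $\iota=(y/x)(\infty)\in\resatinf$ and $\nu:=\lambda^{-1}=\nu_0+\nu_1\iota=(x^m/f)(\infty)$, and using $\omega_\infty(1)=0$ and $\omega_\infty(\iota)=-1$ from~\eqref{eq:omegadef}, a direct computation gives $d=a\nu_1 j+\nu_0\,ij$ and $s_{D_\infty}(\lambda e_\infty)=N_{\resatinf/k}(\nu)^{-1}(a\nu_1 j-\nu_0\,ij)$, and then that $s_{D_\infty}(\lambda e_\infty)=-\overline u\,d\,u$ for $u:=N_{\resatinf/k}(\nu)^{-1}(\nu_0+\nu_1 i)\in D^\times$. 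Hence $\qf d\perp\qf{s_{D_\infty}(\lambda e_\infty)}$ is metabolic, and therefore $[(S_D)_*(H)]+t_\infty(\partial^2_\infty(\qf f))=\qf d+\qf{s_{D_\infty}(\lambda e_\infty)}=0$, which completes the proof.

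\noindent
\textbf{Main obstacle.} The delicate point is exactly this last matching: exhibiting the ``middle'' one-dimensional form produced by the Hankel reduction as $-t_\infty(\partial^2_\infty(\qf f))$ in $W^-(D)$, which forces one to track coherently the isomorphisms $\Mor_\pspot$, $\varepsilon_\pspot$, $\mu_{\pi_\infty}$, the involution trace $s_{D_\infty}$, and all signs in the skew-hermitian setting. A cleaner conceptual organization, which I would try to make precise, rests on the observation that $\LL_\sigma$ is isomorphic to the canonical sheaf $\omega_X$ (both are the degree $-2$ generator of $\Pic(X)$, and $\varepsilon$ identifies $\LL_\sigma$ with $\II(\infty)=\VR_X(\operatorname{div}\omega)$ for $\omega=\tfrac{dx}{2y}$): from this viewpoint $\sum_\pspot t_\pspot\circ\partial^2_\pspot$ is a skew-hermitian ``sum of the residues of $\omega$'' attached to the form $H$ over the Azumaya algebra $\D$, which vanishes, and the computation above merely verifies this reciprocity.
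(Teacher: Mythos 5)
Your proof is correct and follows essentially the same route as the paper: it uses Proposition~\ref{prop:isoPhi} together with the basis from Lemma~\ref{lem:Qbase} to compute the Gram matrix of $(S_D)_*(H)$, observes via Lemma~\ref{lem:S} that it is anti-triangular with constant anti-diagonal, concludes hyperbolicity when $-v_\infty(f)$ is even, and in the odd case matches the surviving middle entry against $t_\infty\left(\partial^2_\infty(\qf f)\right)$ computed from Proposition~\ref{prop:explicitransfer}. The only cosmetic difference is your choice of $(f/x^m)(\infty)$ rather than its inverse $(x^m/f)(\infty)$ as the representative of the residue at $\infty$, which turns the paper's literal equality of pure quaternions into an equality up to $\overline{u}(\,\cdot\,)u$.
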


\begin{proof}
  Let $v_\infty(f)=-n$. For $\alpha$, $\beta=0$, \ldots, $n-1$, we have
  \[
  H\left(ex^\alpha+f\overline{\T_\af}, ex^\beta+f\overline{\T_\af}\right) =
  ex^{\alpha+\beta}+f\D_\af;  
  \]
  hence, as $e=bi+axj+yij$,
  \begin{equation*}
  (S_D)_*(H)\left(ex^\alpha+f\overline{\T_\af}, ex^\beta+f\overline{\T_\af}\right) = 
  bS\left(x^{\alpha+\beta}+f\VR_\af\right) i + aS\left(x^{\alpha+\beta+1}+f\VR_\af]\right)j +
  S\left(x^{\alpha+\beta}y+f\VR_\af\right)ij.
  \end{equation*}
  Now, for $g\in\VR_\af$ such that $v_\infty(g)>-n$,
  \[
  S(g+f\VR_\af) = -\omega_\infty\left(\frac gf(\infty)\right) = 0; 
  \]
  hence $ex^\alpha+f\overline{\T_\af}$ and
  $ex^\beta+f\overline{\T_\af}$ are orthogonal for 
  $(S_D)_*(H)$ when $\alpha+\beta+1<n$. In particular, the images of
  $ex^\alpha$ for $\alpha<\frac{n-1}2$ span a totally isotropic
  subspace of $\overline{\T_\af}\,/f\overline{\T_\af}$. If $n$ is even,
  this totally isotropic 
  subspace has $D$-dimension $\frac
  n2=\frac12\dim_D(\overline{\T_\af}\,/f\overline{\T_\af})$, 
  so $(S_D)_*(H)$ is hyperbolic. By Proposition~\ref{prop:isoPhi}, 
  it follows that $\sum_{\pspot\in X^{(1)}_\af}
  t_\pspot\left(\partial^2_\pspot(\qf f)\right)=0$. Since
  $\partial^2_\infty(\qf f)=0$, the proposition follows.

  Now suppose  $n=2m+1$ for some integer $m$. Then the image of
  $(ex^\alpha)_{\alpha=0}^{m-1}$ spans a totally isotropic subspace in
  the orthogonal complement of $ex^m+f\overline{\T_\af}$, so
  $(S_D)_*(H)$ is 
  Witt-equivalent to its restriction to the span of
  $ex^m+f\overline{\T_\af}$. Computation shows 
  \[
  (S_D)_*(H)\left(ex^m+f\overline{\T_\af}, ex^m+f\overline{\T_\af}\right) =
  -a\omega_\infty\left(\frac{x^n}f(\infty)\right)j -
  \omega_\infty\left(\frac{x^{n-1}y}f(\infty)\right)ij; 
  \]
  hence by Proposition~\ref{prop:isoPhi}
  \begin{equation}
    \label{eq:null1}
    \sum_{\pspot\in X^{(1)}_\af} t_\pspot\left(\partial^2_\pspot(\qf
    f)\right) =
    \left\langle-a\omega_\infty\left(\frac{x^n}f(\infty)\right)j - 
  \omega_\infty\left(\frac{x^{n-1}y}f(\infty)\right)ij\right\rangle
  \quad\text{in $W^-(D)$.} 
  \end{equation}
  On the other hand, since $n$ is odd, we have
  \[
  \qf f = \left\langle{x^{-n-1}f}\right\rangle = \left\langle{x^{-1}}\right\rangle \left\langle{f^{-1}x^n}\right\rangle,
  \]
  hence $\partial_\infty^2(\qf f) = \qf{\frac{x^n}f(\infty)}$
  in $W(\resatinf)$. By Proposition~\ref{prop:explicitransfer}, the
  Witt class $t_\infty\left(\partial^2_\infty(\qf f)\right)$ is
  represented by the transfer along $s_{D_\infty}$ of the skew-hermitian
  form $h_f$ on $\overline\Tatinf$ such that
  \[
  h_f(e_\infty,e_\infty)=\frac{x^n}f(\infty) e_\infty.
  \]
  As observed in the proof of Lemma~\ref{lem:Qbase},
  $\overline\Tatinf$ is a 
  $D$-vector space of dimension~$1$. Taking $e_\infty$ as a base of
  $\overline\Tatinf$, we obtain
  \begin{equation}
    \label{eq:null2}
    t_\infty\left(\partial_\infty^2(\qf f)\right) =
    \left\langle s_{D_\infty}\left(\frac{x^n}f(\infty)
    e_\infty\right)\right\rangle. 
  \end{equation}
  Recall that $e_\infty$ is the image in $\Tatinf$ of $ex^{-1}$,
  so $e_\infty=aj+\frac{y}x(\infty) ij$. Therefore,
  \begin{equation}
    \label{eq:null3}
    s_{D_\infty}\left(\frac{x^n}f(\infty) e_\infty\right) =
    a\omega_\infty\left(\frac{x^n}f(\infty)\right)j +
    \omega_\infty\left(\frac{x^{n-1}y}f(\infty)\right)ij. 
  \end{equation}
  The proof follows by comparing~\eqref{eq:null1}, \eqref{eq:null2},
  and \eqref{eq:null3}.
\end{proof}

\subsection{Exactness at $\boldsymbol{\bigoplus_\pspot W(\resatp)}$}
\label{subsec:exact}

We prove the exactness of the sequence in Theorem~\ref{thm:exseq2} by
relating it to the following exact sequence due to Pfister
\cite[Theorem~5]{Pf}:
\begin{equation}
  \label{eq:Pf}
  W(F)\overset{\delta''}\lra \bigoplus_{\pspot\in X^{(1)}_\af}
  W(\resatp) \xrightarrow{\sum(s_\pspot)_*} W(k)/J, 
\end{equation}
where $J=\{\varphi\in W(k)\mid \qf{1,-a}\varphi=0\}$ is the subgroup
annihilated by the norm form of $\resatinf$ and $\delta''$ is the map
whose $\pspot$-component is $\partial_\pspot^2$ for all $\pspot\in
X^{(1)}_\af$, for a coherent choice of uniformizer and linear
functional at each $\pspot$. For this, we use the canonical
isomorphism $\resatinf\simeq k(i)\subset D$ of
\cite[Proposition~45.12]{EKM}. Let $K=k(i)$, and let $\gamma$ be the
canonical isomorphism
\[
\gamma\colon \resatinf \overset{\lowsim}{\lra} K,\quad \frac yx(\infty)
\longmapsto i.
\]
Theorem~\ref{thm:octa} yields an exact sequence
\[
W^-(D)\overset{\pi_2}\lra W(K) \overset{\sigma_2}\lra W^-(D)
\overset{\pi_1}\lra W^-(K,\invo).
\]

\begin{lem}
  \label{lem:t=sigma}
  Let $\Psi\colon W(\resatinf)\to W(K)$ be the isomorphism that
  maps every quadratic form $\qf g$ to $\qf{-\overline{\gamma(g)}}$,
  for $g\in \resatinf^\times$. The following diagram commutes:
  \[
  \xymatrix{
  W(\resatinf) \ar[dr]_{t_\infty} \ar[rr]^{\Psi} & &
  W(K)\ar[dl]^{\sigma_2}\\ 
  &W^-(D)\rlap{.}&
  }
  \]
\end{lem}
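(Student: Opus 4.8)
The plan is to evaluate both composites $t_\infty$ and $\sigma_2\circ\Psi$ on an arbitrary rank-one form $\qf g$ with $g\in\resatinf^\times$, and to check that they produce the same diagonal skew-hermitian form over $D$. Since $W(\resatinf)$ is generated by such classes and $t_\infty$, $\sigma_2$, $\Psi$ are group homomorphisms, this will suffice.

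For $t_\infty(\qf g)$ I would use Proposition~\ref{prop:explicitransfer} with $\pspot=\infty$, so that the uniformizer is $x^{-1}$: the Witt class $t_\infty(\qf g)$ is the transfer along $s_{D_\infty}=\Id_D\otimes\omega_\infty$ of the skew-hermitian form $h_g$ on the fiber $\overline{\Tatinf}$ determined by $h_g(e_\infty,e_\infty)=g e_\infty$. As recalled in the proof of Proposition~\ref{prop:null}, $\overline{\Tatinf}$ is one-dimensional over $D$ with basis $e_\infty$, and $e_\infty=aj+\tfrac{y}{x}(\infty)\,ij$ in $D\otimes_k\resatinf$; taking $e_\infty$ as a $D$-basis, $t_\infty(\qf g)$ is therefore represented by the diagonal form $\qf{s_{D_\infty}(ge_\infty)}$ with
\[
s_{D_\infty}(ge_\infty)=a\,\omega_\infty(g)\,j+\omega_\infty\!\left(g\cdot\tfrac{y}{x}(\infty)\right)ij,
\]
exactly as in~\eqref{eq:null3}.

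Next I would transport everything through $\gamma$. Writing $\gamma(g)=g_0+g_1i$ with $g_0,g_1\in k$ and using~\eqref{eq:omegadef} (which says $\omega_\infty\circ\gamma^{-1}$ vanishes on $1$ and sends $i$ to $-1$), I get $\omega_\infty(g)=-g_1$ and $\omega_\infty\!\left(g\cdot\tfrac{y}{x}(\infty)\right)=-g_0$, the latter because $\gamma(g)\cdot i=g_1a+g_0i$; hence $s_{D_\infty}(ge_\infty)=-ag_1\,j-g_0\,ij$. On the other side, $\Psi(\qf g)=\qf{-\overline{\gamma(g)}}=\qf{-g_0+g_1i}$, and unwinding the definition of $\sigma_2$ shows that a rank-one symmetric bilinear form $\qf c$ over $K$ is sent to the rank-one skew-hermitian form $\qf{ijc}$ over $D$; so $\sigma_2(\Psi(\qf g))=\qf{ij(-g_0+g_1i)}$. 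The identity $iji=-aj$ turns this into $\qf{-g_0\,ij-ag_1\,j}$, which coincides with the expression for $s_{D_\infty}(ge_\infty)$ above. Thus $t_\infty(\qf g)=\sigma_2(\Psi(\qf g))$ for all $g\in\resatinf^\times$, and the diagram commutes.

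The only delicate point is bookkeeping. One must make sure the transfer $(s_{D_\infty})_*$ is taken on $\overline{\Tatinf}$ regarded as a \emph{right} $D$-module, with $e_\infty$ a $D$-basis, so that $t_\infty(\qf g)$ genuinely reduces to the single diagonal entry $\qf{s_{D_\infty}(ge_\infty)}$; and one must keep track of left versus right scalars and of the conjugation $\invo$ when unwinding the definitions of $\sigma_2$ and of $\Psi$. Beyond these sign computations I do not expect any conceptual obstacle.
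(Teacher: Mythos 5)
Your proposal is correct and follows essentially the same route as the paper: both evaluate $t_\infty(\qf g)$ via Proposition~\ref{prop:explicitransfer} (equivalently \eqref{eq:null2}--\eqref{eq:null3}) using $e_\infty=aj+\tfrac yx(\infty)ij$ as a $D$-basis of $\overline{\Tatinf}$, and compare with $\sigma_2(\Psi(\qf g))=\qf{-ij\overline{\gamma(g)}}=\qf{-\gamma(g)ij}$ by the same quaternion identity; your coordinate computation with $\gamma(g)=g_0+g_1i$ reproduces the paper's identity $a\omega_\infty(g)+\omega_\infty\bigl(g\,\tfrac yx(\infty)\bigr)i=-\gamma(g)i$ exactly.
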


\begin{proof}
  For $g\in \resatinf^\times$, the definitions of $\sigma_2$ and $\Psi$ yield
  \[
  \sigma_2\left(\Psi(\qf g)\right) = \left\langle{-ij\overline{\gamma(g)}}\right\rangle
  =\qf{-\gamma(g)ij}, 
  \]
  whereas
  \[
  t_\infty(\qf g) = \qf{s_{D_\infty}(ge_\infty)} =
  \left\langle a\omega_\infty(g)j + \omega_\infty\left(g\,\frac
  yx(\infty)\right)ij\right\rangle 
  \]
  (see \eqref{eq:null2} and \eqref{eq:null3}). Computation yields
  \[
  a\omega_\infty(g) + \omega_\infty\left(g\,\frac
  yx(\infty)\right)i=-\gamma(g)i, 
  \]
  hence 
  \[
  t_\infty(\qf g) = \qf{-\gamma(g)ij} = 
  \sigma_2\circ\Psi(\qf g).
  \qedhere
  \]
\end{proof}

\begin{lem}
  \label{lem:Theta}
  The map that carries every symmetric bilinear form $\varphi\colon
  U\times U\to k$ to the skew-hermitian form 
  $
  \qf{bi}\varphi_{(K,\invo)}\colon (U\otimes_kK)\times
  (U\otimes_kK)\to K
  $
  such that for $u$, $u'\in U$ and $\alpha$, $\alpha'\in K$, 
  \[
  \qf{bi}\varphi_{(K,\invo)}(u\otimes\alpha,
    u'\otimes\alpha') = bi\overline{\alpha}\,\varphi(u,u')\alpha'
  \] 
  induces a group isomorphism
  \[
  \Theta\colon W(k)/J \overset{\lowsim}{\lra} W^-(K,\invo).
  \]
  This map makes the following diagram commute for every $\pspot\in
  X^{(1)}_\af$:
  \[
  \xymatrix{
  W(\resatp) \ar[r]^{t_\pspot} \ar[d]_{(s_\pspot)_*} & W^-(D)
  \ar[d]^{\pi_1} \\
  W(k)/J\ar[r]^-{\Theta}& W^-(K,\invo)\rlap{.}
  }
  \]
\end{lem}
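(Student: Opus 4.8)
\emph{Strategy.} I would split the lemma into the two claims it makes: that $\Theta$ is a well‑defined group isomorphism, and that it makes the square commute, and handle each separately.

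\emph{The isomorphism $\Theta$.} I would factor $\Theta$ as the composite of the scalar extension map $\ext\colon W(k)\to W^+(K,\invo)$, $\varphi\mapsto\varphi_{(K,\invo)}=\varphi\otimes_kK$, with multiplication by the skew unit $bi\in K^\times$. Since $\overline{bi}=-bi$, multiplication by $bi$ is an isomorphism $W^+(K,\invo)\overset{\lowsim}{\lra}W^-(K,\invo)$ (inverse: multiplication by $(bi)^{-1}$). The map $\ext$ is surjective because every hermitian form over $(K,\invo)$ diagonalizes with coefficients in $k^\times$; and, by Jacobson's theorem, a form $\varphi$ over $k$ satisfies $\varphi_{(K,\invo)}=0$ in $W^+(K,\invo)$ precisely when its trace form $\qf{1,-a}\,\varphi$ vanishes in $W(k)$, i.e.\ when $\varphi\in J$ (recall $\qf{1,-a}$ is the norm form of $\resatinf\simeq K=k(i)$). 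Hence $\ext$ induces an isomorphism $W(k)/J\overset{\lowsim}{\lra}W^+(K,\invo)$, and post‑composing with multiplication by $bi$ gives the isomorphism $\Theta$.

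\emph{Commutativity.} Since $W(\resatp)$ is generated by the classes $\qf f$ with $f\in\resatp^\times$ and all four maps are group homomorphisms, it suffices to prove $\pi_1(t_\pspot(\qf f))=\Theta((s_\pspot)_*(\qf f))$ for each $\pspot\in X^{(1)}_\af$. Put $E=\resatp$ and $K_E=K\otimes_kE$, a quadratic étale $E$‑algebra carrying the $E$‑linear extension of $\invo$, still written $\invo$; recall $\Datp=D\otimes_kE$ is split, since the closed point $\pspot$ provides an $E$‑point of the Severi--Brauer variety of $D$. By its definition, $\pi_1$ is the transfer of Witt groups along the $\invo$‑equivariant involution trace $\ell\colon D\to K$ that projects $D=K\oplus jK$ onto $K$. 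Writing $\ell_E=\ell\otimes\Id_E\colon\Datp\to K_E$ and $s_{K_E}=\Id_K\otimes s_\pspot\colon K_E\to K$, one has $\ell\circ s_{\Datp}=s_{K_E}\circ\ell_E$; so by transitivity of the transfer and by Proposition~\ref{prop:explicitransfer},
\[
\pi_1(t_\pspot(\qf f))=(\ell_*\circ(s_{\Datp})_*)(h_f)=(s_{K_E})_*((\ell_E)_*(h_f)).
\]
The key point is that $(\ell_E)_*(h_f)$, a rank‑one skew‑hermitian form over $(K_E,\invo)$, equals $\qf{bif}_{(K_E,\invo)}$ in $W^-(K_E,\invo)$: indeed $\ell_E(e_\pspot)=bi$ (the contributions of $axj$ and $yij$ to $e=bi+axj+yij$ lie in $jK_E$) and $h_f(e_\pspot,e_\pspot)=f\,e_\pspot$ by Proposition~\ref{prop:explicitransfer}, so the claim is immediate whenever $e_\pspot$ is a $K_E$‑basis of $\overline{\Tatp}=e_\pspot\Datp$ — which holds when $K_E$ is a field, as then $\kappa\mapsto e_\pspot\kappa$ is injective on $K_E$ and $\dim_E\overline{\Tatp}=2=\dim_EK_E$ — while when $K_E\simeq E\times E$ the involution $\invo$ is the exchange involution, $W^-(K_E,\invo)=0$, and the equality is vacuous. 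Finally, since $bi\in K$ commutes with scalar extension and with the $K$‑linear transfer $(s_{K_E})_*$, writing $\qf{bif}_{(K_E,\invo)}=\qf{bi}\cdot\qf f_{(K_E,\invo)}$ and invoking the compatibility of transfer with scalar extension (Frobenius reciprocity) $(s_{K_E})_*(\qf f_{(K_E,\invo)})=((s_\pspot)_*(\qf f))_{(K,\invo)}$, we obtain
\[
(s_{K_E})_*(\qf{bif}_{(K_E,\invo)})=\qf{bi}\cdot((s_\pspot)_*(\qf f))_{(K,\invo)}=\Theta((s_\pspot)_*(\qf f)),
\]
which is the required identity.

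\emph{Main obstacle.} The step I expect to be delicate is the identification of $(\ell_E)_*(h_f)$ with $\qf{bif}_{(K_E,\invo)}$: one must keep precise control of the right $K_E$‑module structure of $\overline{\Tatp}$ — which is \emph{not} free over $\Datp$, and is free of rank one over $K_E$ only after the case split above — as well as of the skew‑hermitian structure surviving the two successive transfers. A more computational alternative, in the spirit of the proof of Proposition~\ref{prop:null}, would be to substitute the explicit descriptions of $t_\pspot(\qf f)$ and of $\pi_1$ directly into both sides and verify the resulting identities in the quaternion algebra $D$ by hand.
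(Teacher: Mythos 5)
Your proposal is correct, and for the half of the lemma concerning the commutative square it takes a genuinely different route from the paper. For the isomorphism $\Theta$ your argument coincides in substance with the paper's: both reduce hyperbolicity of $\varphi_{(K,\invo)}$ to hyperbolicity of its trace form $\qf{1,-a}\varphi$ and use diagonalizability with entries in $k^\times$ for surjectivity. For the square, the paper argues by brute force: it shows via the relations $ej=ey+eix$ and $eij=-eax-eiy$ (obtained from $e^2=0$) that $(e_\pspot,e_\pspot i)$ is a $\resatp$-basis of $\overline{\Tatp}$, picks a $k$-basis $(c_\alpha)$ of $\resatp$, and verifies that $\pi_1(h_f)$ and $\qf{bi}\bigl((s_\pspot)_*\qf f\bigr)_{(K,\invo)}$ have the same Gram matrix $\bigl(s_\pspot(fc_\alpha c_\beta)bi\bigr)$ in the $K$-basis $(e_\pspot c_\alpha)$. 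You instead exploit the factorization $\ell\circ s_{\Datp}=s_{K_E}\circ\ell_E$, transitivity of Scharlau transfers, the identification $(\ell_E)_*(h_f)\simeq\qf{bif}$ over $(K_E,\invo)$, and the projection/base-change formulas; this is more structural, isolates exactly where the geometry enters (namely $\ell_E(e_\pspot)=bi$ and the fact that $e_\pspot$ generates $\overline{\Tatp}$ over $K_E$), and generalizes more readily, at the cost of invoking several standard but unproved compatibilities of transfers. Your case split according to whether $K_E$ is a field is valid (in the split case both Witt classes vanish in $W^-(K_E,\invo)=0$, so the identity survives the outer transfer), but it is avoidable: the paper's relations show directly that $e_\pspot$ spans $\overline{\Tatp}$ over $K_E$, and freeness then follows in all cases from the surjectivity of $\kappa\mapsto e_\pspot\kappa$ between $\resatp$-spaces of equal dimension $2$. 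One terminological quibble: the identity $(s_{K_E})_*\bigl(\qf f_{(K_E,\invo)}\bigr)=\bigl((s_\pspot)_*\qf f\bigr)_{(K,\invo)}$ is compatibility of transfer with base change rather than Frobenius reciprocity, though you use it correctly.
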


\begin{proof}
  Multiplication by $bi$ defines an isomorphism $W^-(K,\invo)\simeq
  W(K,\invo)$; hence for every symmetric bilinear form
  $\varphi\colon U\times U\to k$,  the skew-hermitian form
  $\qf{bi}\varphi_{(K,\invo)}$ is hyperbolic if and only if the
  hermitian form $\varphi_{(K,\invo)}$ is hyperbolic. This occurs if
  and only if the quadratic form $s_K(\varphi_{(K,\invo)})\colon
  U\otimes_kK\to k$ defined by
  $s_K(\varphi_{(K,\invo)})(u\otimes\alpha) = \overline{\alpha}\,
  \varphi(u,u)\alpha$ is hyperbolic; see \cite[Section~10.1, p.~348]{Sch}.
  Since
  $s_K(\varphi_{(K,\invo)})= \qf{1,-a}\varphi$, it follows that the
  map $\Theta$ is well defined and injective. It is also surjective
  because every skew-hermitian form over $(K,\invo)$ has a
  diagonalization $\qf{\alpha_1i,\ldots,\alpha_ni}$ with $\alpha_1$,
  \ldots, $\alpha_n\in k^\times$.

  Now, let $f\in \resatp^\times$. We know from
  Proposition~\ref{prop:explicitransfer} that the Witt class of
  $t_\pspot(\qf f)$ is represented by the skew-hermitian form
  \[
  h\colon \overline\Tatp\times \overline\Tatp\lra D \quad\text{defined by}\quad
    h(e_\pspot\xi, e_\pspot \eta)=s_{\Datp}\left(f\overline\xi e_\pspot
    \eta\right) \quad\text{for }\xi, \eta\in \Datp.
  \]
  By multiplying \eqref{eq:esq0} on the left by $(ij)^{-1}$ and by
  $j^{-1}$, we obtain 
  \[
    je-xie+ye=0 \quad\text{and}\quad -ije+axe-yie=0; 
  \]
  hence, after conjugation,
  \[
  ej=ey+eix \quad\text{and}\quad eij=-eax-eiy \quad\text{in
    $\overline{\T_\af}$.}
  \]
  Therefore, $e_\pspot j$ and $e_\pspot ij$ are in the $\resatp$-span of
  $e_\pspot$ and $e_\pspot i$ in $\overline\Tatp$; hence $(e_\pspot,
  e_\pspot i)$ is a $\resatp$-base of $\overline\Tatp$. Let
  $(c_\alpha)_{\alpha=1}^{\deg\pspot}$ be a $k$-base of
  $\resatp$. Then $(e_\pspot c_\alpha, e_\pspot i
  c_\alpha)_{\alpha=1}^{\deg\pspot}$ is a $k$-base of $\overline\Tatp$; 
  hence $(e_\pspot c_\alpha)_{\alpha=1}^{\deg\pspot}$ is a $K$-base of
  $\overline\Tatp$. Since $e_\pspot=bi+ax(\pspot)j+y(\pspot)ij$, we have
  \[
  s_{\Datp}\left(fc_\alpha e_\pspot c_\beta\right) = s_\pspot\left(fc_\alpha
  c_\beta\right)bi + s_\pspot\left(fc_\alpha x(\pspot) c_\beta\right)aj + s_\pspot\left(f
  c_\alpha y(\pspot) c_\beta\right)ij; 
  \]
  hence the matrix of $\pi_1(h)$ in the base $(e_\pspot
  c_\alpha)_{\alpha=1}^{\deg\pspot}$ is
  \[
  \left(s_\pspot\left(fc_\alpha
  c_\beta\right)bi\right)_{\alpha,\beta=1}^{\deg\pspot}.
  \]
  The skew-hermitian form
  $\qf{bi}(s_\pspot)_*\left(\qf{f}\right)_{(K,\invo)}$ has the same
  matrix. 
\end{proof}

In the next lemma, we use the following notation: We write
\[
\varpi\colon \bigoplus_{\pspot\in X^{(1)}}W(\resatp) \lra
\bigoplus_{\pspot\in X^{(1)}_\af}W(\resatp)
\]
for the map that ``forgets'' the component at $\infty$.

\begin{lem}
  \label{lem:forget}
We have  $\ker \varpi\cap \ker(\sum t_\pspot) \subset \image(\delta)$.
\end{lem}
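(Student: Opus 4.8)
The plan is to analyze what it means for an element of $\bigoplus_{\pspot\in X^{(1)}}W(\resatp)$ to lie in $\ker\varpi\cap\ker(\sum t_\pspot)$ and to produce, from such an element, a form over $F$ whose residues $\partial^2_\pspot$ realize it. An element $\eta\in\ker\varpi$ has vanishing component at $\infty$ and so is supported on finitely many points $\pspot_1,\dots,\pspot_n\in X^{(1)}_\af$; after diagonalizing each component and using the $W(k)$-linearity of everything in sight, I would reduce to the case where $\eta=(\qf{g_1},\dots,\qf{g_n})$ with $g_\alpha\in k(\pspot_\alpha)^\times$. Using the principal ideal domain $\VR_\af$ and the irreducible generators $\pi_\alpha$ chosen in Section~\ref{subsec:unif}, the natural candidate is the form $\qf f$ over $F$ with $f=\pi_1\cdots\pi_n\in\VR_\af$ suitably scaled. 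More precisely, via the isomorphism $\Phi$ (from the diagram before~\eqref{eq:Phinv}) and formula~\eqref{eq:Phinv}, I would take $\tilde g\in\VR_\af$ with $\tilde g\pi_\alpha^{-1}(\pspot_\alpha)=g_\alpha$ for each $\alpha$ and set $f=\tilde g$; then $\partial^2_{\pspot_\alpha}(\qf f)=\qf{g_\alpha}$ up to the square-class bookkeeping built into the coherent choice of $\pi_\alpha$, and $\partial^2_\pspot(\qf f)=0$ for $\pspot\notin\{\pspot_1,\dots,\pspot_n,\infty\}$.

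The one point that needs genuine care is the residue at $\infty$: the form $\qf f$ over $F$ is not a priori in the image of $\delta$, because $\delta=\oplus_\pspot\partial^2_\pspot$ records the \emph{second} residue at every point including $\infty$, and $\partial^2_\infty(\qf f)$ need not vanish. Here is where the hypothesis $\eta\in\ker(\sum t_\pspot)$ enters. By the nullity result Proposition~\ref{prop:null} we know $\sum_{\pspot\in X^{(1)}}t_\pspot(\partial^2_\pspot(\qf f))=0$; combining this with the assumption $\sum_{\alpha}t_{\pspot_\alpha}(\qf{g_\alpha})=0$ (which is $\sum t_\pspot(\eta)=0$, since $\eta$ has no component at $\infty$) and with Proposition~\ref{prop:isoPhi} identifying $\sum_{\alpha}t_{\pspot_\alpha}(\partial^2_{\pspot_\alpha}(\qf f))$ with $(S_D)_*(H)$, I would deduce that $t_\infty(\partial^2_\infty(\qf f))=0$ in $W^-(D)$. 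The plan is then to exploit the explicit description of $t_\infty$ near $\infty$: since $\overline\Tatinf$ is a one-dimensional $D$-vector space and $t_\infty$ is, by Proposition~\ref{prop:explicitransfer}, the transfer of a one-dimensional skew-hermitian form $h_f$ over $\Datinf=D\otimes_k\resatinf$ with $\Datinf$ a quaternion \emph{algebra} over the quadratic field $\resatinf\simeq K$ --- and $K$ splits $D$, so $\Datinf\simeq M_2(K)$ --- the transfer $t_\infty$ is injective (a Morita equivalence followed by an injective scalar transfer). Hence $t_\infty(\partial^2_\infty(\qf f))=0$ forces $\partial^2_\infty(\qf f)=0$ in $W(\resatinf)$, i.e.\ $\qf f$ already lies in $\ker\delta$.

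At this point $\qf f\in\ker\delta=\image(\res_F)$ by~\eqref{eq:WC}, so $\delta''(\qf f)=\delta(\qf f)=0$ on all of $X^{(1)}$; but the \emph{affine} residues $\partial^2_{\pspot_\alpha}(\qf f)=\qf{g_\alpha}$ recover $\eta$. Therefore $\eta=\delta(\qf f)\in\image(\delta)$. To assemble the general $\eta$ (a sum of such rank-one pieces over possibly different point-supports) I would use additivity of $\delta$ and of $\sum t_\pspot$ and handle one square class of one component at a time, noting that adding a form supported at a single affine point $\pspot_\alpha$ changes only the $\pspot_\alpha$-component and the $\infty$-component, the latter being controlled exactly as above.

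The \textbf{main obstacle} I anticipate is the $\infty$-component argument: one must be confident that $t_\infty$ is injective (equivalently, that no nonzero skew-hermitian form over $\Datinf$ has hyperbolic transfer to $D$), and that the square-class normalization hidden in $\partial^2_\pspot$ and in the coherent choice of $(\pi_\pspot,s_\pspot)$ is compatible with the identification $\partial^2_{\pspot_\alpha}(\qf f)=\qf{g_\alpha}$ coming from~\eqref{eq:Phinv}. The injectivity of $t_\infty$ should follow from the factorization $t_\infty=(s_{D_\infty})_*\circ\Mor_\infty^{-1}\circ(\varepsilon_\infty)_*^{-1}\circ(\mu_{\pi_\infty})_*^{-1}$: the last three maps are isomorphisms, and $(s_{D_\infty})_*\colon W^-(\Datinf)\to W^-(D)$ is injective because scalar extension $W^-(D)\to W^-(\Datinf)$ (which is Morita-equivalent to $W(\resatinf)\to W(\resatinf)$, an isomorphism, since $\resatinf$ splits $D$ and turns skew-hermitian into hermitian into quadratic) followed by $(s_{D_\infty})_*$ is multiplication by a one-dimensional trace form, hence injective on the Witt group by Frobenius reciprocity together with the fact that $W^-(D)$ has no $\resatinf/k$-hyperbolic elements other than zero. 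The bookkeeping compatibility is the kind of routine-but-delicate verification already done in Propositions~\ref{prop:coherent} and~\ref{prop:coherent2}, so I would simply invoke those and Proposition~\ref{prop:explicitransfer} rather than redo it.
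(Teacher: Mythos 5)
Your proof starts from a misreading of $\varpi$: this map \emph{forgets} the component at $\infty$, so $\ker\varpi$ consists of tuples whose components at all \emph{affine} points vanish, i.e., elements supported only at $\infty$ --- not, as you assume, elements with vanishing $\infty$-component. With the correct reading, the hypothesis $\sum t_\pspot(\eta)=0$ reduces to $t_\infty(\varphi_\infty)=0$, and the lemma asks you to show that any class in $\ker t_\infty$, placed in the $\infty$-slot with zeros elsewhere, is the tuple of residues $\delta$ of some form over $F$. Your entire construction (reading off an affine support $\pspot_1,\dots,\pspot_n$, forming $f=\pi_1\cdots\pi_n$, invoking Propositions~\ref{prop:isoPhi} and~\ref{prop:null}) therefore addresses a different statement, essentially a piece of Proposition~\ref{prop:exactatsum} --- which in the paper is \emph{deduced from} this lemma.

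Second, the claim you lean on at the crucial step --- that $t_\infty$ is injective --- is false, and if it were true the lemma (correctly read) would be vacuous, since $\ker\varpi\cap\ker(\sum t_\pspot)$ would then be $\{0\}$. By Lemma~\ref{lem:t=sigma}, $t_\infty=\sigma_2\circ\Psi$ with $\Psi$ an isomorphism, and the exact octagon of Theorem~\ref{thm:octa} gives $\ker\sigma_2=\image\bigl(\pi_2\colon W^-(D)\to W(K)\bigr)$, which is nonzero in general (e.g.\ $\pi_2(\qf{j})=\qf{1,-b}$ by~\eqref{eq:defpi2}). Your Frobenius-reciprocity argument establishes at best the injectivity of scalar extension, not of the transfer $(s_{D_\infty})_*$; for a quadratic extension the kernel of a transfer is typically the image of restriction, not zero. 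The non-injectivity of $t_\infty$ is precisely what the lemma has to handle, and the paper's proof does so by running the octagon the other way: it identifies $\ker t_\infty$ with $\Psi^{-1}(\image\pi_2)$, uses~\eqref{eq:defpi2} to write any such class as a sum of forms $\qf{u}\,\qf{1,-a\lambda^2-bN_{\resatinf/k}(u)}$, and then exhibits for each of these an explicit $f=\lambda+u_1x+u_2y\in\VR_\af$ whose image under $\delta$ is the given class concentrated at $\infty$ (the point being that the auxiliary affine residue at the zero of $f$ vanishes because the second factor becomes hyperbolic over that residue field).
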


\begin{proof}
  Let $(\varphi_\pspot)_{\pspot\in X^{(1)}}\in\ker\varpi\cap\ker(\sum
  t_\pspot)$. Thus, $\varphi_\pspot=0$ for all $\pspot\neq\infty$ and
  $t_\infty(\varphi_\infty)=0$. Lemma~\ref{lem:t=sigma} yields
  $\sigma_2\left(\Psi(\varphi_\infty)\right)=0$; hence by
  Theorem~\ref{thm:octa}, we may find an $h\in W^-(D)$ such that
  $\pi_2(h)=\Psi(\varphi_\infty)$. From the description of $\pi_2$
  in~\eqref{eq:defpi2}, it follows that $\varphi_\infty$ is a sum of
  Witt classes represented by quadratic forms of the type $\qf u
  \qf{1,-a\lambda^2-b N_{\resatinf/k}(u)}$, for $u\in K^\times$ and
  $\lambda\in k$. To complete the proof, it suffices to show that
  every element in $\bigoplus_{\pspot\in X^{(1)}} W(\resatp)$ whose
  $\pspot$-components are~$0$ for all $\pspot\neq\infty$ and whose
  $\infty$-component is represented by a form of the type above is in
  the image of $\delta$.

  Fix $\lambda\in k$ and $u=u_1+ u_2\frac yx(\infty)\in K^\times$
  (with $u_1$, $u_2\in k$), and consider
  \[
  f=\lambda+u_1x+u_2y\in\VR_\af.
  \]
  Since $u_1$ and $u_2$ are not both zero, we have $v_\infty(f)=-1$; 
  hence $f$ is irreducible in $\VR_\af$. Let $\qspot\in X^{(1)}_\af$
  be the point such that $f\in \mathfrak{m}_\qspot\cap\VR_\af$. Then
  \[
  \resatq\simeq k\left(\sqrt{a\lambda^2+bu_1^2-abu_2^2}\,\right); 
  \] 
  hence (for any choice of uniformizer at $\qspot$)
  \[
  \partial_\qspot^2\left(\qf f
  \qf{1,-a\lambda^2-bN_{\resatinf/k}(u)}\right) = 0 \quad\text{in
    $W(\resatq)$.}
  \]
  Moreover, for every $\pspot\in X^{(1)}_\af$ with $\pspot\neq\qspot$,
  \[
  \partial^2_\pspot\left(\qf f
  \qf{1,-a\lambda^2-bN_{\resatinf/k}(u)}\right) = 0 \quad\text{in
    $W(\resatp)$}
  \]
  because $v_\pspot(f)=0$. Furthermore, with $x^{-1}$ as
  uniformizer at $\infty$, 
  \[
  \partial^2_\infty\left(\qf f
  \qf{1,-a\lambda^2-bN_{\resatinf/k}(u)}\right) = \qf u
  \qf{1,-a\lambda^2-bN_{\resatinf/k}(u)} \quad\text{in
    $W(\resatinf)$.}
  \]
  Thus, the element in $\bigoplus_{\pspot\in X^{(1)}} W(\resatp)$
  whose $\pspot$-components are all $0$ for $\pspot\neq\infty$ and
  whose $\infty$-component is represented by $\qf u
  \qf{1,-a\lambda^2-bN_{\resatinf/k}(u)}$ is the image of $\qf f
  \qf{1,-a\lambda^2-bN_{\resatinf/k}(u)}$ under $\delta$.
\end{proof}

\begin{prop}
  \label{prop:exactatsum}
  The sequence \eqref{eq:exseq2} is exact at
  $\bigoplus_\pspot W(\resatp)$.
\end{prop}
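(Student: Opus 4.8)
The plan is to take for granted that \eqref{eq:exseq2} is already a zero sequence at $\bigoplus_\pspot W(\resatp)$ — this is precisely the content of Proposition~\ref{prop:null} (the reduction in the Nullity subsection shows $\sum_\pspot t_\pspot\circ\partial^2_\pspot$ vanishes on every $\qf f$, hence on all of $W(F)$). So the only thing left to prove is the inclusion $\ker(\sum_\pspot t_\pspot)\subseteq\image(\delta)$. I would fix $(\varphi_\pspot)_{\pspot\in X^{(1)}}\in\ker(\sum_\pspot t_\pspot)$ and first extract information at the finite points by applying the homomorphism $\pi_1\colon W^-(D)\to W^-(K,\invo)$ to the relation $\sum_\pspot t_\pspot(\varphi_\pspot)=0$.

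For $\pspot\neq\infty$, Lemma~\ref{lem:Theta} rewrites $\pi_1\circ t_\pspot$ as $\Theta\circ(s_\pspot)_*$; for $\pspot=\infty$, Lemma~\ref{lem:t=sigma} gives $t_\infty=\sigma_2\circ\Psi$, and the exactness of the octagon of Theorem~\ref{thm:octa} at the term $W^-(D)$ sitting between $W^+(K)\xrightarrow{\sigma_2}W^-(D)$ and $W^-(D)\xrightarrow{\pi_1}W^-(K,\invo)$ forces $\pi_1\circ\sigma_2=0$, whence $\pi_1\bigl(t_\infty(\varphi_\infty)\bigr)=0$. Applying $\pi_1$ to $\sum_\pspot t_\pspot(\varphi_\pspot)=0$ therefore yields $\Theta\bigl(\sum_{\pspot\neq\infty}(s_\pspot)_*(\varphi_\pspot)\bigr)=0$; since $\Theta$ is an isomorphism (Lemma~\ref{lem:Theta}), we get $\sum_{\pspot\in X^{(1)}_\af}(s_\pspot)_*(\varphi_\pspot)=0$ in $W(k)/J$.

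At this point I would invoke Pfister's exact sequence \eqref{eq:Pf}: the vanishing just obtained means that $(\varphi_\pspot)_{\pspot\in X^{(1)}_\af}$ lies in $\image(\delta'')$, so there is some $\psi\in W(F)$ with $\partial^2_\pspot(\psi)=\varphi_\pspot$ for every $\pspot\in X^{(1)}_\af$. I would then replace $(\varphi_\pspot)$ by $(\varphi_\pspot)-\delta(\psi)$. By construction this element has vanishing component at every $\pspot\neq\infty$, so it lies in $\ker\varpi$; and it still lies in $\ker(\sum_\pspot t_\pspot)$, because $\sum_\pspot t_\pspot(\partial^2_\pspot(\psi))=0$ by Proposition~\ref{prop:null}. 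Lemma~\ref{lem:forget} then places $(\varphi_\pspot)-\delta(\psi)$ in $\image(\delta)$, and hence $(\varphi_\pspot)\in\image(\delta)$ as well, which completes the proof.

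The routine points to keep straight are: that Proposition~\ref{prop:null} genuinely applies to an arbitrary $\psi\in W(F)$ (reduce to diagonal forms $\qf f$, then, using that $\VR_\af$ is a principal ideal domain, to $f$ square-free in $\VR_\af$, and then to a product of the uniformizers chosen in Section~\ref{subsec:unif}); and being careful about which copy of $W^-(D)$ in the octagon is invoked for $\pi_1\circ\sigma_2=0$. I do not expect a serious obstacle here: once Lemmas~\ref{lem:t=sigma}, \ref{lem:Theta}, and~\ref{lem:forget} and the octagon are in hand, the argument is a short diagram chase that transports the kernel condition over to Pfister's sequence \eqref{eq:Pf} and back. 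If anything is delicate, it is that the whole edifice rests on Lemma~\ref{lem:forget} (which already absorbs the description \eqref{eq:defpi2} of $\pi_2$), so the only real work is making sure its hypotheses — membership in both $\ker\varpi$ and $\ker(\sum t_\pspot)$ — are met by $(\varphi_\pspot)-\delta(\psi)$.
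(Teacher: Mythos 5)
Your argument is correct and is essentially the paper's own proof: the paper packages the same steps as a diagram chase through the commutative ladder comparing $(\delta,\sum t_\pspot)$ with Pfister's sequence \eqref{eq:Pf} via $\varpi$ and $\Theta^{-1}\circ\pi_1$, using Proposition~\ref{prop:null} for the zero-sequence property and Lemma~\ref{lem:forget} for the part supported at $\infty$. Your explicit justification that $\pi_1\circ t_\infty=0$ (via Lemma~\ref{lem:t=sigma} and the octagon) is exactly what makes the right-hand square of that ladder commute at the $\infty$-component, a point the paper leaves implicit.
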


\begin{proof}
  Consider the following diagram:
  \[
  \xymatrix@C=35pt{
  W(F) \ar@{=}[d] \ar[r]^-{\delta} & \bigoplus_{\pspot\in X^{(1)}}
  W(\resatp) \ar[d]_{\varpi} \ar[r]^-{\sum t_\pspot} & W^-(D)
  \ar[d]^{\Theta^{-1}\circ\pi_1} \\
  W(F) \ar[r]^-{\delta''} & \bigoplus_{\pspot\in X^{(1)}_\af}
  W(\resatp) \ar[r]^-{\sum(s_\pspot)_*} & W(k)/J\rlap{.}
  }
  \]
  The left square commutes by the definition of the maps, and the right
  square commutes by Lemma~\ref{lem:Theta}. The upper sequence is a
  zero sequence by Proposition~\ref{prop:null}, and the lower sequence
  is exact by Pfister's theorem \cite[Theorem~5]{Pf}. Therefore, a diagram
  chase yields for every $u\in\ker(\sum t_\pspot)$ an element $v\in
  W(F)$ such that $\delta''(v)=\varpi(u)$. Then $u-\delta(v)\in\ker\varpi$,
  and $u-\delta(v)\in\ker(\sum t_\pspot)$ because the upper sequence
  is a zero sequence. Therefore, Lemma~\ref{lem:forget} shows that
  $u-\delta(v)\in\image(\delta)$, hence $u\in\image(\delta)$.
\end{proof}

\subsection{Exactness at $W^-(D)$}

To complete the proof of Theorem~\ref{thm:exseq2}, we show that the
map $\sum t_\pspot$ is onto. It suffices to prove that $1$-dimensional
skew-hermitian forms over $D$ are in the image of $\sum t_\pspot$.

\begin{prop}
  \label{prop:onto}
  For every nonzero $q\in D^0$,  there exist a $\pspot\in X^{(1)}_\af$ of
  degree~$2$ 
  and an $f\in \resatp^\times$ such that $t_\pspot(\qf f)= \qf q$.
\end{prop}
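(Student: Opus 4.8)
Write the given pure quaternion as $q=\beta_1\,aj+\beta_2\,ij+\beta_3\,bi$ with $\beta_1,\beta_2,\beta_3\in k$ not all zero. The plan is to produce $\pspot$ and $f$ by hand, using the description of $t_\pspot(\qf f)$ as a one‑dimensional skew‑hermitian form given by Proposition~\ref{prop:explicitransfer}: I will find a degree‑$2$ point $\pspot\in X^{(1)}_\af$ and an $f\in\resatp^\times$ such that $t_\pspot(\qf f)$ has "coordinates" exactly $(\beta_1,\beta_2,\beta_3)$ in the basis $(aj,ij,bi)$ of $D^0$.

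First I would parametrize suitable points. For $(\mu_1,\mu_2)\neq(0,0)$ the element $\pi:=\mu_0+\mu_1x+\mu_2y\in\VR_\af$ satisfies $v_\infty(\pi)=-1$: the poles of $\mu_1x$ and $\mu_2y$ at $\infty$, each of order $1$, do not cancel since $\sqrt a\notin k$. Hence the zero divisor of $\pi$, being effective of degree $2$ on a curve without rational points, is a single closed point $\pspot$ of degree $2$, so $\pi$ generates the prime ideal $\mathfrak m_\pspot\cap\VR_\af$; by~\eqref{eq:degpi} indeed $\deg\pspot=2$. Now I would choose $(\mu_0,\mu_1,\mu_2)\in k^3$ with $(\mu_1,\mu_2)\neq(0,0)$ solving the single linear equation
\[
\mu_1\beta_1+\mu_2\beta_2+\mu_0\beta_3=0,
\]
which is possible because the plane of solutions is not contained in the line $\mu_1=\mu_2=0$. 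Let $\pspot$ be the associated point, take $\pi_\pspot=\mu_0+\mu_1x+\mu_2y$ as uniformizer (an admissible choice in the sense of Section~\ref{subsec:unif}) and let $s_\pspot$ be the coherently associated linear functional, which is nonzero. Put $x_0=x(\pspot)$, $y_0=y(\pspot)\in\resatp$. Evaluating $\pi_\pspot$ at $\pspot$ gives $\mu_0+\mu_1x_0+\mu_2y_0=0$; and since $\pspot$ is not a rational point, $1,x_0,y_0$ span $\resatp$ over $k$, so this is their only $k$‑linear relation up to a scalar.

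Next I would compute the transfer. Because $\deg\pspot=2$, the fiber $\overline\Tatp$ is a one‑dimensional right $D$‑vector space (its $k$‑dimension is $2\deg\pspot=4=\dim_kD$), so the nonzero element $e_\pspot$ is a $D$‑basis of it, and Proposition~\ref{prop:explicitransfer} gives $t_\pspot(\qf f)=\qf{s_{\Datp}(f e_\pspot)}$. Since $e_\pspot=bi\otimes1+aj\otimes x_0+ij\otimes y_0$ and $s_{\Datp}=\Id_D\otimes s_\pspot$, this reads
\[
t_\pspot(\qf f)=\qf{\,s_\pspot(fx_0)\,aj+s_\pspot(fy_0)\,ij+s_\pspot(f)\,bi\,}\qquad\text{for }f\in\resatp^\times .
\]
The $k$‑linear map $\resatp\to k^3$, $f\mapsto\bigl(s_\pspot(fx_0),s_\pspot(fy_0),s_\pspot(f)\bigr)$, lands in the plane $\mu_1\gamma_1+\mu_2\gamma_2+\mu_0\gamma_3=0$ (apply $s_\pspot$ to $f\cdot(\mu_0+\mu_1x_0+\mu_2y_0)=0$) and is injective (as $1,x_0,y_0$ span $\resatp$ and $s_\pspot\neq0$), hence an isomorphism onto that plane. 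By the choice of $(\mu_0,\mu_1,\mu_2)$, the triple $(\beta_1,\beta_2,\beta_3)$ lies in this plane, so some $f\in\resatp$ — necessarily nonzero — satisfies $s_\pspot(fx_0)=\beta_1$, $s_\pspot(fy_0)=\beta_2$, $s_\pspot(f)=\beta_3$, and then $t_\pspot(\qf f)=\qf q$.

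I do not anticipate a genuine obstacle: once Proposition~\ref{prop:explicitransfer} reduces $t_\pspot(\qf f)$ to the explicit one‑dimensional form displayed above, the statement becomes elementary linear algebra over $k$. The only points that need a little care are that $e_\pspot$ is a $D$‑basis of $\overline\Tatp$, the identification of degree‑$2$ points of $X_\af$ with linear forms $\mu_0+\mu_1x+\mu_2y$, and the injectivity of the displayed map — and all three rest on $X$ having no rational point, i.e.\ on $D$ being a division algebra.
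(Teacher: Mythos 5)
Your proposal is correct and is essentially the paper's own argument in different coordinates: the paper also picks the degree-$2$ point $\pspot$ as the intersection of the conic with the line $\mu_0+\mu_1x+\mu_2y=0$ for $(\mu_0,\mu_1,\mu_2)$ orthogonal to the coordinates of $q$ (with $(\mu_1,\mu_2)\neq(0,0)$ to avoid $\infty$), and then solves the same linear system $s_\pspot(f\cdot{-})\mapsto(\beta_1,\beta_2,\beta_3)$ using that $1,x(\pspot),y(\pspot)$ span $\resatp$ with the single relation $\mu_0+\mu_1x(\pspot)+\mu_2y(\pspot)=0$. Your justification that the point has degree $2$ (via the divisor of $\mu_0+\mu_1x+\mu_2y$ and the absence of rational points) and your explicit identification of the image plane are just slightly more detailed versions of steps the paper leaves implicit.
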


\begin{proof}
  Let $q=\lambda_1i+\lambda_2j+\lambda_3ij$ with $\lambda_1$,
  $\lambda_2$, $\lambda_3\in k$. We may find $\alpha_1$, $\alpha_2$,
  $\alpha_3\in k$ with $\alpha_2$, $\alpha_3$ not both zero such that
  \[
  \alpha_1\lambda_1+\alpha_2\lambda_2+\alpha_3\lambda_3=0.
  \]
  Let $\pspot\in X^{(1)}$ be the intersection of the conic with the
  line $\alpha_1bZ+\alpha_2aX+\alpha_3Y=0$ in the projective plane
  $\mathbb{P}(D^0)$. The point $\pspot$ has degree~$2$, and
  $\pspot\neq\infty$ since $\alpha_2$ and $\alpha_3$ are not both
  zero. In $\resatp$, the following equation holds:
  \[
  \alpha_1b+\alpha_2ax(\pspot) +\alpha_3y(\pspot)=0.
  \]
  Therefore, there is a linear functional $r\colon \resatp\to k$ such
  that 
  \[
  r(b)=\lambda_1, \quad r(ax(\pspot))=\lambda_2,\quad\text{and}\quad
  r(y(\pspot))=\lambda_3.
  \]
  Every $k$-linear functional on $\resatp$ has the form $g\mapsto
  s_\pspot(fg)$ for some $f\in \resatp$;  hence we may find an $f\in
  \resatp^\times$ such that $r(g)=s_\pspot(fg)$ for all $g\in
  \resatp$. The element $e_\pspot=bi+ax(\pspot)j +y(\pspot)ij$ is a
  $D$-base of $\overline\Tatp$, and Proposition~\ref{prop:explicitransfer}
  shows that in this base
  \[
  t_\pspot(\qf f) = \qf{s_{\Datp}(fe_\pspot)} = \qf{s_\pspot(fb)i +
    s_\pspot(fax(\pspot))j + s_\pspot(fy(\pspot))ij} = \qf q.
  \qedhere
  \]
\end{proof}

The proof of Theorem~\ref{thm:exseq2} is thus complete.

%%%%%%%%%%%%%%%%%%%%%
% References
%%%%%%%%%%%%%%%%%%%%%

\newcommand{\etalchar}[1]{$^{#1}$}


\begin{thebibliography}{KMR98\etalchar{+}+++}


\bibitem[AEJ92]{AEJ} J.\,Kr.~Arason, R.~Elman and B.~Jacob, \emph{On indecomposable vector bundles}, Comm.\ Algebra {\bf 20} (1992), no.~5, 1323--1351,
\doi{10.1080/00927879208824407}.
 
\bibitem[Bal05]{Bal} P.~Balmer, \emph{Witt groups}, in \emph{Handbook of $K$-theory. Vol.~1, 2}, pp.~539--576, Springer, Berlin, 2005,
\doi{10.1007/978-3-540-27855-9_11}.
 
\bibitem[BW02]{BW} P.~Balmer and C.~Walter, \emph{A Gersten--Witt spectral sequence for regular rings}, Ann.\ Sci.\ \'Ecole Norm.\ Sup.~(4) {\bf 35} (2002), no.~1, 127-152,
\doi{10.1016/S0012-9593(01)01084-9}.
 
\bibitem[Bec08]{Bec} K.\,J.~Becher, \emph{A proof of the Pfister Factor Conjecture}, Invent.\ Math.\ {\bf 173} (2008), no.~1, 1--6,
\doi{10.1007/s00222-007-0107-5}.
 
\bibitem[Ber07]{Ber} G.~Berhuy, \emph{Cohomological invariants of quaternionic skew-hermitian forms}, Arch.\ Math.\ (Basel) {\bf 88} (2007), no.~5, 434--447,
\doi{10.1007/s00013-006-2001-6}.
 
\bibitem[Bou70]{Bou} N.~Bourbaki, \emph{Alg\`ebre, Chapitres 1 \`a 3}, Hermann, Paris, 1970.
 
\bibitem[Che51]{Chev} C.~Chevalley, \emph{Introduction to the Theory of Algebraic Functions of One Variable}, Math.\ Surveys, No.~VI, Amer.\ Math.\ Soc., New York, NY, 1951,
  \doi{10.1090/surv/006}.
 
\bibitem[EKM08]{EKM} R.~Elman, N.~Karpenko and A.~Merkurjev, \emph{The algebraic and geometric theory of quadratic forms}, Amer.\ Math.\ Soc.\ Colloq.\ Publ., vol.~56, Amer. Math. Soc., Providence, RI, 2008.
 
\bibitem[Gar18]{Garrel:these} N.~Garrel, \emph{Invariants cohomologiques de groupes alg\'ebriques et d'alg\`ebres \`a involutions}, Ph.D. dissertation, Universit\'e Paris 13, 2018, available at \url{https://nicolas.garrel.me/thesis.pdf}. 
 
\bibitem[GBM05]{GBM} N.~Grenier-Boley and M.\,G.~Mahmoudi, \emph{Exact sequences of Witt groups}, Comm.\ Algebra {\bf 33} (2005), no.~4, 965--986,
\doi{10.1081/AGB-200053791}.
 
\bibitem[Kar10]{Kar} N.\,A.~Karpenko, \emph{Hyperbolicity of orthogonal involutions} (with an appendix by J.-P.~Tignol), Doc.\ Math.\ {\bf 2010}, Extra vol.: Andrei A.\ Suslin sixtieth birthday, 371--392.
 
\bibitem[Kne70]{Kneb} M.~Knebusch, \emph{Grothendieck- und Wittringe von nichtausgearteten symmetrischen Bilinearformen}, S.-B.\ Heidelberger Akad.\ Wiss.\ Math.-Natur.\ Kl.\ {\bf 1969/70} (1969/1970), 93--157,
\doi{10.1007/978-3-642-99987-1}.
 
\bibitem[Knu91]{Knus} M.-A.~Knus, \emph{Quadratic and Hermitian Forms over Rings}, Grundlehren math.\ Wiss., vol.~294, Springer-Verlag, Berlin, 1991,
\doi{10.1007/978-3-642-75401-2}.
 
\bibitem[KMR\etalchar{+}98]{BoI} M.-A.~Knus, A.~Merkurjev, M.~Rost, and J.-P.~Tignol, \emph{The book of involutions}, Amer.\ Math.\ Soc.\ Colloq.\ Publ., vol.~44, Amer.\ Math.\ Soc., Providence, RI, 1998.
 
\bibitem[Lam05]{Lam} T.\,Y.~Lam, \emph{Introduction to quadratic forms over fields}, Grad.\ Stud.\ Math., vol.~67, Amer.\ Math.\ Soc., Providence, RI, 2005.
 
\bibitem[Lew82]{Lewis} D.\,W.~Lewis, \emph{New improved exact sequences of Witt groups}, J. Algebra {\bf 74} (1982), no.~1, 206--210,
\doi{10.1016/0021-8693(82)90013-8}.
 
\bibitem[MT16]{MT} A.~Merkurjev and J.-P.~Tignol, \emph{Excellence of function fields of conics}, Enseign.\ Math.\ {\bf 62} (2016), no.~3-4, 421--456,
\doi{10.4171/lem/62-3/4-3}.
 
\bibitem[Par88]{Pari} R.~Parimala, \emph{Witt groups of conics, elliptic, and hyperelliptic curves}, J.~Number Theory {\bf 28} (1988), no.~1, 69--93,
\doi{10.1016/0022-314X(88)90120-5}.
 
\bibitem[PSS01]{PSS} R.~Parimala, R.~Sridharan and V.~Suresh, \emph{Hermitian Analogue of a Theorem of Springer}, J.~Algebra {\bf 243} (2001), no.~2, 780--789,
\doi{10.1006/jabr.2001.8830}.
 
\bibitem[Pfi93]{Pf} A.~Pfister, \emph{Quadratic Lattices in Function Fields of Genus $0$}, Proc.\ London Math.\ Soc.~(3) {\bf 66} (1993), no.~2, 257--278,
\doi{10.1112/plms/s3-66.2.257}.
 
\bibitem[Pum98]{Pump} S.~Pumpl\"un, \emph{The Witt ring of a Brauer--Severi variety}, Manuscripta Math.\ {\bf 97} (1998), no.~1, 93--108, \doi{https://doi.org/10.1007/s002290050088}. \emph{Erratum:} Manuscripta Math.\ \textbf{103} (2000), 409--411.
\doi{10.1007/s002290070015}.
 
\bibitem[Pum99]{Pump2} \bysame, \emph{The Witt group of symmetric bilinear forms over a Brauer--Severi variety with values in a line bundle}, $K$-Theory {\bf 18} (1999), no.~3, 255--265, \doi{10.1023/A:1007884327508}. \emph{Corrigendum:} $K$-Theory {\bf 23} (2001), no.~2, 201--202, \doi{10.1023/A:1017932902663}.

 
\bibitem[QMT18]{QTPfister} A.~Qu\'{e}guiner-Mathieu and J.-P.~Tignol, \emph{Orthogonal involutions on central simple algebras and function fields of Severi--Brauer varieties}, Adv.\ Math.\ {\bf 336} (2018), 455--476,
\doi{10.1016/j.aim.2018.07.030}.
 
\bibitem[Qui73]{Q} D.~Quillen, \emph{Higher algebraic $K$-theory I},
  In: \emph{Algebraic $K$-theory, I: Higher $K$-theories}
  (Proc.\ Conf., Battelle Memorial Inst., Seattle, Wash., 1972), pp.~85--147, Lecture Notes Math., vol.~341, Springer-Verlag, Berlin--New York, 1973,
\doi{10.1007/BFb0067053}.
 
\bibitem[Sch85]{Sch} W.~Scharlau, \emph{Quadratic and Hermitian Forms}, Grundlehren math.\ Wiss., vol.~270, Springer-Verlag, Berlin, 1985,
\doi{10.1007/978-3-642-69971-9}.
 
\bibitem[Xie19]{Xie} H.~Xie, \emph{Witt groups of smooth projective quadrics}, Adv.\ Math.\ {\bf 346} (2019), 70--123,
\doi{10.1016/j.aim.2019.01.038}.


\end{thebibliography}
\end{document}